
\documentclass{birkjour}

\usepackage{amssymb}
\usepackage{amsmath}

%
%
%

\newtheorem{theorem}{Theorem}[section]
\newtheorem{lemma}[theorem]{Lemma}
\newtheorem{proposition}[theorem]{Proposition}
\newtheorem{corollary}[theorem]{Corollary}
\theoremstyle{definition}
\newtheorem{definition}[theorem]{Definition}

 \numberwithin{equation}{section}

\newcommand{\tuple}[1]{\vec{#1}}

\newcommand {\bs}{\backslash}
\newcommand {\pmodels}{\models_P}
\newcommand {\nneg}{\texttt{Neg}}

\def\parts{\mathcal{P}}
\def\abelard{\texttt{Abelard}}
\def\eloise{\texttt{Eloise}}

\def\dom{\texttt{Dom}}

\def\free{\texttt{Free}}
\def\Dep{\textbf{D}}
\def\DD{\mathcal D}
\def\PFO{\textbf{PFO}}
\def\FO{\textbf{FO}}

\begin{document}

%
%
%
%
%
%
%
%
%


\title[Paraconsistent Team Semantics]{Dialetheism, Game Theoretic Semantics, and Paraconsistent Team Semantics}

\author[P. Galliani]{Pietro Galliani}

\address{
Clausthal University of Technology\\
Am Regenbogen 15\\
38678 Clausthal Zellerfeld\\
Germany}

\email{pgallian@gmail.com}

\thanks{The author acknowledges the support of the Deutsche Forschungsgemeinschaft (project number DI 561/6-1).}
\subjclass{Primary 03B53 ; Secondary 03B60}

\keywords{Paraconsistency, Team Semantics, Game Theoretic Semantics}


\begin{abstract}
We introduce a variant of Dependence Logic in which truth is defined not in terms of existence of winning strategies for the Proponent ($\eloise$) in a semantic game, but in terms of lack of winning strategies for the Opponent ($\abelard$). We show that this language is a conservative but paraconsistent extension of First Order Logic, that its validity problem can be reduced to that of First Order Logic, that it capable of expressing its own truth and validity predicates, and that it is expressively equivalent to Universal Second Order Logic $\Pi_1^1$. Furthermore, we prove that a Paraconsistent Non-dependence Logic formula is consistent if and only if it is equivalent to some First Order Logic sentence; and we show that, on the other hand, all Paraconsistent \emph{Dependence} Logic sentences are equivalent to some First Order sentence with respect to truth (but not necessarily with respect to falsity).
\end{abstract}

\maketitle

\section{Introduction}
Dialetheism is the philosophical position according to which there exist true contradictions. It has been proposed as a possible answer to the paradoxes of self-reference; and moreover, although this is of less relevance for the purposes of this work, it also is one potential approach to the logical treatment of vagueness and transition states.\footnote{We refer to \cite{beziau00,priest04,priest06,priest08} for a more in-depth discussion of this interesting area of research.}

Even though dialetheism is not the only possible motivation for the study of paraconsistent logics, that is, logics in which the principle of explosion (``\emph{Ex contradictione quodlibet}'': from a contradiction, everything can be derived) fails, it is easy to see that dialetheism plus the principle of explosion plus modus ponens implies triviality: if there are true contradictions, and any contradiction implies that everything is true, it follows at once that everything is true.

This is a conclusion that appears to have some practical drawbacks, at least if one maintains that what matters in a logic is the notion of truth and/or logical consequence that it generates; and hence, it comes to little surprise that dialetheism has been generally associated with paraconsistent logics.

The purpose of the present work is to draw attention to a connection between logics of imperfect information, and in particular Dependence Logic, and dialetheism: more specifically, what we aim to show is that by making a simple modification to the semantics of Dependence Logic - in brief, by defining truth in terms of the \emph{absence} of winning strategies for the Opponent in the semantic game, rather than in terms of the \emph{existence} of winning strategies for the Proponent - we obtain a paraconsistent logic with \emph{truth gluts}, that is, sentences that are both true and false. This logic is capable of expressing its own truth and validity predicates; and, differently from other logics for dialetheism such as the Logic of Paradox \cite{priest79}, its definition requires no alterations to the definition of first order model or to the interpretations of first order sentences.\footnote{These modifications may well be opportune if one wishes to model phenomena, such as vagueness, that are not easily represented in a standard first order formalism: but they should not be required for obtaining a conservative, paraconsistent extension of First Order Logic which is able to deal with the paradoxes of self-reference.}

Furthermore, as we will see, this logic has very natural properties: it is expressively equivalent to Universal Second Order Logic $\Pi_1^1$, and therefore the problem of finding its valid formulas is no harder than in the first order case, and moreover a sentence of this logic is consistent over all models if and only if it is equivalent to some first order sentence - in other words, the consistent kernel of our logic is precisely First Order Logic. 

This strongly suggests that the formalism which we will develop in this work - and for whose semantics we will give a justification which is entirely independent from issues of self-reference or paraconsistency - is, in a way, a natural extension of First Order Logic to a formalism capable of expressing its own truth predicate. 
\section{Games and Logics}
\label{sect:GTS+DL}
In this section we will recall the basic definitions of Game Theoretic Semantics and of logics of imperfect information such as IF Logic and, most importantly for our purposes, Dependence Logic. Then we will discuss some of the main properties of Dependence Logic, setting the background for the variant of it which we will introduce and discuss in Section \ref{sect:PDL}.

\subsection{Game Theoretic Semantics}
\emph{Game Theoretic Semantics} \cite{hintikka68,hintikka83, hintikkasandu97} is an approach to semantics which is inspired to Wittgenstein's language games. In brief, Game Theoretic Semantics associates sentences to \emph{semantic games}, which represent debates between a \emph{Proponent} (called usually $\eloise$), who attempts to show that the sentence is true, and an \emph{Opponent} (called usually $\abelard$), who attempts to show that the sentence is false; and then truth is defined in terms of the existence of winning strategies in these games.

In Subsection \ref{subsect:gts}, we will show how to construct such games for an extension of First Order Logic; but informally speaking, the semantic games associated to first order sentences can be defined as follows:
\begin{enumerate}
\item A first order atom $R\tuple t$ or $t_1 = t_2$ corresponds to a \emph{basic assertion}, that can be verified or falsified directly by the players: and the resulting game, in which $\eloise$ argues that $R\tuple t$ (or $t_1 = t_2$) is true in the model and $\abelard$ argues the contrary, is won by $\eloise$ if this statement holds in the model and by $\abelard$ otherwise. 
\item For $\eloise$ to assert that $\lnot \psi$ means that she is willing to assert that her opponent $\abelard$ could not argue convincingly that $\psi$ is true. Hence, the semantic game for $\lnot \psi$ is just the game for $\psi$, but with the roles of the players swapped. 
\item For $\eloise$ to assert that $\psi \vee \theta$ means that she is willing to assert that $\psi$ holds or that $\theta$ holds. Hence, $\psi \vee \theta$ corresponds to the game in which $\eloise$ chooses between $\psi$ and $\theta$, and then plays the corresponding game. 
\item For $\eloise$ to assert that $\exists v \psi$ means that she is willing to assert that $\psi[m/v]$ for at least one element $m$, where $\psi[m/v]$ is the expression obtained by substituting $m$ for $v$ in $\psi$. Hence, $\exists v \psi(v)$ corresponds to the game in which $\eloise$ picks an element $m$ of the model and plays the semantic game for $\psi[m/v]$. 
\end{enumerate}
This is all that there is in the definition of the semantic games for first order expressions. These rules are entirely natural, and, one might argue, at least as straightforward as the rules of Tarski's semantics. Truth and falsity are then defined in terms of the existence of winning strategies for the players: 
\begin{definition}
For any model $M$ and first order sentence $\phi$, we say that $\phi$ is \emph{game-theoretically true} in $M$ and we write $M \models_G^+ \phi$ if and only if $\eloise$ has a winning strategy in the game corresponding to $\phi$ in $M$; and we say that it is \emph{game-theoretically false}, and we write $M \models_G^- \phi$, if and only if $\abelard$ has a winning strategy in this game. 
\end{definition}

Now, since these games are of finite length, of perfect information (in the sense that at every stage of the game the players know everything about the history of the game) and zero-sum, it follows from Zermelo's Theorem that any such game is \emph{determined}, that is, at least one of the players has a winning strategy. Hence, we have that the Game Theoretic Semantics satisfies the Law of the Excluded Middle: 
\begin{proposition}
For any model $M$ and first order sentence $\phi$, either $M \models^+_G \phi$ or $M \models^-_G \phi$. 
\end{proposition}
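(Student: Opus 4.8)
The plan is to derive the statement as a direct consequence of the determinacy of finite, zero-sum, perfect-information games, exactly as the preceding paragraph indicates. Concretely, I would first establish that the semantic game associated to a first order sentence $\phi$ in a model $M$ is a game of finite length, and then invoke Zermelo's Theorem (equivalently, backward induction) to conclude that one of the two players must possess a winning strategy; translating "$\eloise$ has a winning strategy" into $M \models^+_G \phi$ and "$\abelard$ has a winning strategy" into $M \models^-_G \phi$ then yields the proposition.

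First I would verify the finite-length property by structural induction on $\phi$. Each of the four rules defining the game either swaps the roles of the players without introducing a move (negation), or introduces a single move after which the play continues as the game for a \emph{proper subformula} (disjunction and existential quantification), or terminates the play immediately (atomic formulas). Hence every play reaches a basic assertion after a number of moves bounded by the syntactic complexity of $\phi$, and the game tree is well-founded. At each leaf, the winner is determined by whether the corresponding atom holds in $M$, so the game is genuinely zero-sum with a determined outcome at every terminal position.

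Next I would run backward induction over this well-founded tree, assigning to each position the player who has a winning strategy from that position onward: at a position where $\eloise$ moves she wins iff \emph{some} successor is winning for her, at a position where $\abelard$ moves she wins iff \emph{all} successors are winning for her, and at a leaf the winner is read off directly. By well-founded recursion this assignment is total and assigns the root to exactly one of the two players, which is precisely the dichotomy asserted.

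The one point requiring care — and the step I expect to be the main obstacle — is that at an existential (or universal) move the branching may be infinite when $M$ is infinite, so the finitely branching version of Zermelo's Theorem does not apply verbatim. I would address this by observing that backward induction needs only well-foundedness of the game tree, that is, finiteness of the \emph{length} of each play, and not finiteness of the branching: the value of each internal node is fixed by an existential ("some successor") or universal ("all successors") quantification over its children, both of which are well-defined regardless of cardinality. Thus the recursion over the well-founded tree still determines a unique winner at the root, giving the Law of the Excluded Middle for $\models^+_G$ and $\models^-_G$.
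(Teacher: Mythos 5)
Your proposal is correct and follows essentially the same route as the paper: the paper's entire justification is the preceding paragraph observing that these games are finite-length, perfect-information, and zero-sum, and then citing Zermelo's Theorem for determinacy. Your only addition is to spell out the backward-induction argument and to note that well-foundedness (rather than finite branching) suffices when the model is infinite -- a detail the paper leaves implicit inside its appeal to Zermelo, but not a different method.
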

\begin{corollary}
For all models $M$ and first order sentences $\phi$, $M \models^+_G \phi \vee \lnot \phi$
\end{corollary}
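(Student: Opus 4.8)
The plan is to derive the Law of the Excluded Middle directly from the determinacy result established in the preceding Proposition, by unfolding the game associated with $\phi \vee \lnot \phi$ and exhibiting an explicit winning strategy for $\eloise$ in each of the two possible cases.

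First I would unfold the semantic game for $\phi \vee \lnot \phi$ according to the disjunction rule: $\eloise$ begins by selecting one of the two disjuncts, after which the corresponding subgame is played. The crucial observation is that, by the Proposition, the game for $\phi$ in $M$ is determined, so either $M \models^+_G \phi$ (that is, $\eloise$ has a winning strategy in the game for $\phi$) or $M \models^-_G \phi$ (that is, $\abelard$ has a winning strategy in that same game). These two cases are exhaustive, and I would treat them separately.

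In the first case, $\eloise$ simply opens by choosing the left disjunct $\phi$ and then follows her winning strategy for the $\phi$-game, thereby winning the whole game for $\phi \vee \lnot \phi$. In the second case, $\eloise$ opens by choosing the right disjunct $\lnot \phi$; since by definition the game for $\lnot \phi$ is the game for $\phi$ with the two players' roles interchanged, the winning strategy that $\abelard$ possesses in the original $\phi$-game becomes, under this role swap, a winning strategy for $\eloise$ in the $\lnot \phi$-game. In either case $\eloise$ has a winning strategy in the game for $\phi \vee \lnot \phi$, and hence $M \models^+_G \phi \vee \lnot \phi$, as required.

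I would expect the only delicate point to be the second case, where one must verify that a winning strategy for $\abelard$ in the game for $\phi$ genuinely transfers to a winning strategy for $\eloise$ in the game for $\lnot \phi$. This is immediate from the negation rule as stated, since that game is literally the same game tree with the roles of the two players exchanged, so no computation is needed; the remaining check, that composing $\eloise$'s opening choice of disjunct with the inherited subgame strategy yields a genuine strategy in the full disjunction game, is entirely routine.
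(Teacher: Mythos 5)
Your proof is correct and follows essentially the same route as the paper's own: a case split justified by the determinacy Proposition, with $\eloise$ picking the left disjunct and copying her winning strategy when $M \models^+_G \phi$, and picking the right disjunct and inheriting $\abelard$'s strategy via the role-swap reading of negation when $M \models^-_G \phi$. The only cosmetic difference is that the paper phrases the second case as ``$\eloise$ has no winning strategy for $\phi$'' and then invokes determinacy, whereas you invoke determinacy up front; the argument is identical.
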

\begin{proof}
Suppose that $\eloise$ has a winning strategy for the game corresponding to $\phi$. Then she has a winning strategy for the game corresponding to $\phi \vee \lnot \phi$: she simply picks the left disjunct, then plays as in her winning strategy for this subgame. 

Suppose instead that $\eloise$ has no winning strategies for $\phi$. Then $\abelard$ has a winning strategy for $\phi$: and since $\lnot \phi$ corresponds to the game for $\phi$, but with the roles of the players swapped, it follows at once that $\eloise$ has a winning strategy for $\lnot \phi$. Then she also has a winning strategy for $\phi \vee \lnot \phi$, in which she picks the right disjunct and then plays her winning strategy for this subgame.
\end{proof}
Also, since swapping the players twice is the same as keeping them unvaried, it is easy to see that the law of double negation holds for this semantics:
\begin{proposition}
For all $M$, $\phi$ and $d \in \{+, -\}$, $M \models^d_G \lnot \lnot (\phi)$ if and only if $M \models^d_G \phi$. 
\end{proposition}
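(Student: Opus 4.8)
The plan is to exploit the fact that negation is interpreted purely as a role-swap on the semantic game, so that double negation is an involution returning the original game unchanged. First I would recall the semantic rule for negation: for any formula $\psi$, the game associated with $\lnot\psi$ is obtained from the game associated with $\psi$ by interchanging the roles of $\eloise$ and $\abelard$ --- that is, the two games have identical move-trees and identical terminal positions, and only the assignment of which player verifies and which falsifies is reversed.

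Applying this observation twice, I would argue that the game for $\lnot\lnot\phi$ arises from the game for $\phi$ by performing the role-swap twice. Since interchanging the two roles is an involution (applying it a second time cancels the first), the resulting game is not merely isomorphic to but literally identical with the game for $\phi$: same positions, same legal moves at each position, same terminal winning conditions, and the same player occupying each role. Consequently a strategy for $\eloise$ (respectively $\abelard$) in the game for $\lnot\lnot\phi$ is the very same object as a strategy for $\eloise$ (respectively $\abelard$) in the game for $\phi$, and it is winning in the one exactly when it is winning in the other.

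Finally I would conclude by unwinding the definitions for both values of $d$. For $d = +$, we have $M \models^+_G \lnot\lnot\phi$ iff $\eloise$ has a winning strategy in the game for $\lnot\lnot\phi$, which by the preceding paragraph holds iff $\eloise$ has a winning strategy in the game for $\phi$, i.e. $M \models^+_G \phi$; the case $d = -$ is identical with $\abelard$ in place of $\eloise$. This disposes of both directions of the biconditional simultaneously, uniformly in $d$.

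I do not anticipate a genuine obstacle here: the entire content lies in the single observation that the negation clause acts only by swapping player roles, so no analysis of the internal structure of the game tree is needed, and in particular the determinacy (Zermelo) argument used for the Law of Excluded Middle is not invoked at all. The only point demanding a little care is to state clearly that the two games \emph{coincide} as game objects --- rather than merely sharing a winner --- so that the transfer of winning strategies between them is immediate rather than requiring a separate construction.
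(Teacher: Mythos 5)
Your proposal is correct and is essentially the paper's own argument: the paper dispenses with this proposition by the single remark that ``swapping the players twice is the same as keeping them unvaried,'' which is precisely your observation that the role-swap interpretation of negation is an involution, so the game for $\lnot\lnot\phi$ coincides with the game for $\phi$ and winning strategies for either player transfer immediately, uniformly in $d$.
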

Furthermore, no terminal position of our games is winning for $\abelard$ and for $\eloise$ both: as Avicenna, who famously stated that 
\begin{quote}
\emph{anyone who denies the law of non-contradiction should be beaten and burned until he admits that to be beaten is not the same as not to be beaten, and to be burned is not the same as not to be burned}
\end{quote}
 would perhaps be glad to see, no reconciliation is possible, even in principle, between somebody who asserts that an atomic formula $\texttt{BEATEN}(x)$ is true and somebody who asserts that it is false. And since all possible debates between $\eloise$ and $\abelard$ end in terminal positions of this kind, we have that 
\begin{proposition}
\label{propo:lnc_ext}
For no model $M$ and first order sentence $\phi$ it holds that $M \models^+_G \phi$ and $M \models^-_G \phi$. 
\end{proposition}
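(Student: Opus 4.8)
The plan is to prove Proposition \ref{propo:lnc_ext} by induction on the structure of the first order sentence $\phi$, tracking what it means for $\eloise$ and $\abelard$ to each possess a winning strategy in the associated semantic game. The key structural observation, which the preceding discussion already foreshadows, is that the semantic games are finite, zero-sum, and of perfect information, so that in any play exactly one player wins and the other loses at the terminal position; consequently no single play can be simultaneously a win for both. From this it follows that $\eloise$ and $\abelard$ cannot both have winning strategies, since one could pit the two strategies against each other and obtain a single play that both win --- a contradiction.

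More concretely, I would first make precise the base case: a terminal position of the game is reached exactly when a basic assertion $R\tuple t$ or $t_1 = t_2$ has been argued, and by the stipulation in clause (1) of the informal semantics, such a position is winning for $\eloise$ if the atom holds in $M$ and winning for $\abelard$ otherwise. These two conditions are mutually exclusive (this is precisely the Avicennan point that being beaten is not the same as not being beaten), so no terminal position is winning for both players. For the inductive step, I would argue that if $\eloise$ has a strategy $\sigma$ and $\abelard$ has a strategy $\tau$, then letting the two strategies play against each other determines a unique play of the game, since at each position exactly one player moves and her strategy dictates the move. That single play reaches some terminal position, and by the base case that position is winning for exactly one of the two players; hence $\sigma$ and $\tau$ cannot both be winning strategies.

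Alternatively, and perhaps more in keeping with the recursive presentation of the games given in the excerpt, I would phrase the induction directly on $\phi$, using the clauses (1)--(4). For $\lnot \psi$, the game is that of $\psi$ with roles swapped, so ``$\eloise$ has a winning strategy for $\lnot\psi$'' is ``$\abelard$ has a winning strategy for $\psi$'' and vice versa; the claim for $\lnot\psi$ then reduces immediately to the claim for $\psi$ with the two players interchanged. For $\psi \vee \theta$, a winning strategy for $\eloise$ selects a disjunct, say $\psi$, and wins the subgame, while a winning strategy for $\abelard$ must win in both subgames; if both existed, restricting $\abelard$'s strategy to the subgame $\eloise$ selected would contradict the induction hypothesis for that subgame. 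The quantifier case $\exists v\,\psi$ is handled analogously, with $\eloise$ choosing a witness $m$ and the induction hypothesis applied to $\psi[m/v]$.

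I expect the main obstacle to be purely expository rather than mathematical: the informal game definitions of clauses (1)--(4) have not yet been pinned down as a formal game tree (that is deferred to Subsection \ref{subsect:gts}), so the cleanest rigorous argument --- simulating $\sigma$ against $\tau$ to extract a single contradictory play --- relies on the general theory of finite perfect-information games rather than on the not-yet-given formal definition. I would therefore lead with the strategy-confrontation argument, which depends only on the finiteness, determinacy-type structure, and the mutual exclusivity of outcomes at terminal positions, and mention that the same conclusion can be read off by structural induction once the formal games of Subsection \ref{subsect:gts} are in hand. The one genuinely essential ingredient, and the only place where the content of the proposition really lives, is the base case: that the winning conditions for the two players at an atomic terminal position are complementary, never both satisfied.
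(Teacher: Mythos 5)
Your proposal is correct and matches the paper's own (very brief) argument: the paper justifies Proposition \ref{propo:lnc_ext} exactly by noting that no terminal (atomic) position is winning for both players --- the Avicennan point you identify as the essential ingredient --- and that every play ends in such a position, which is precisely your strategy-confrontation argument made informal. The structural induction you sketch as an alternative is a fine elaboration but adds nothing the paper's route needs.
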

We can even show that the law of the excluded middle holds as an expression of the language:
\begin{proposition}
\label{propo:lnc_int}
For all models $M$ and first order sentences $\phi$, $M \models^+_G \lnot(\phi \wedge \lnot \phi)$, where $\phi \wedge \lnot \phi$ stands for $\lnot (\lnot \phi \vee \lnot \lnot \phi)$.
\end{proposition}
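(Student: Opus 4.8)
The plan is to reduce the statement to the law of the excluded middle already recorded in the Corollary, by unfolding the abbreviation and applying the law of double negation. Writing $\psi$ for $\lnot\phi \vee \lnot\lnot\phi$, the expression $\lnot(\phi \wedge \lnot\phi)$ is by definition $\lnot\lnot\psi$. By the double negation proposition (instantiated with $d = +$), $M \models^+_G \lnot\lnot\psi$ holds if and only if $M \models^+_G \psi$; so it suffices to show $M \models^+_G \lnot\phi \vee \lnot\lnot\phi$.

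To produce a winning strategy for $\eloise$ in the game for $\lnot\phi \vee \lnot\lnot\phi$, I would invoke determinacy (Zermelo's theorem, as in the Proposition preceding the Corollary) and split into two cases according to which player wins the game for $\phi$. If $\abelard$ has a winning strategy for $\phi$, i.e.\ $M \models^-_G \phi$, then because the game for $\lnot\phi$ is the game for $\phi$ with the roles of the players swapped, $\eloise$ has a winning strategy for $\lnot\phi$; she therefore picks the left disjunct and plays it. If instead $\eloise$ has a winning strategy for $\phi$, i.e.\ $M \models^+_G \phi$, then by double negation $M \models^+_G \lnot\lnot\phi$, so $\eloise$ picks the right disjunct and plays her winning strategy for $\lnot\lnot\phi$. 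In either case $\eloise$ wins the disjunctive game, so $M \models^+_G \lnot\phi \vee \lnot\lnot\phi$, and the reduction above then gives $M \models^+_G \lnot(\phi \wedge \lnot\phi)$.

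I do not expect any real obstacle here: the argument is a mirror of the proof of the Corollary, the only genuinely new ingredients being the unfolding of the defined connective $\wedge$ and two applications of the double negation law. The one point requiring care is purely bookkeeping --- keeping straight the polarities when passing through the outer double negation $\lnot\lnot\psi$, and when replacing the right disjunct $\phi$ of the excluded middle by its double negation $\lnot\lnot\phi$ --- so that the determinacy case split lands on $\lnot\phi$ and $\lnot\lnot\phi$ exactly as the two disjuncts demand.
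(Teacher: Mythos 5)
Your proof is correct and takes essentially the same route as the paper's: unfold the defined conjunction so that the target becomes $\lnot\lnot(\lnot\phi \vee \lnot\lnot\phi)$, apply the double-negation law to reduce to $\lnot\phi \vee \lnot\lnot\phi$, and observe that this is an instance of the law of the excluded middle. The only cosmetic difference is that the paper simply cites the already-proved Corollary (instantiated at $\lnot\phi$), whereas you re-derive that instance by the determinacy case split --- which is just the Corollary's own proof inlined, not a different argument.
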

\begin{proof}
By the law of double negation, we know that $M \models^+_G \lnot \lnot (\lnot \phi \vee \lnot \lnot \phi)$ if and only if $M \models^+_G \lnot \phi \vee \lnot \lnot \phi$. But this is an instance of the law of the excluded middle, which we already proved. 
\end{proof}
It is worth remarking here that the proof of Proposition \ref{propo:lnc_int} makes no use whatsoever of Proposition \ref{propo:lnc_ext}, nor vice versa. In effect, the two propositions are not related: \ref{propo:lnc_ext} says that it is never the case for any semantic game that $\abelard$ and $\eloise$ both have a winning strategy, whereas \ref{propo:lnc_int} says that $\eloise$ has a winning strategy for all semantic games of a certain kind. In the case of First Order Logic, these two statements turn out to be both true; but as we will see in Subsection \ref{subsect:prop}, there is a known conservative extension of this logic for which \ref{propo:lnc_int} fails and \ref{propo:lnc_ext} holds, and in Section \ref{sect:PDL} we will develop one for which the opposite is the case.

So far so good. But an obvious question to ask at this point is: what exactly is the relationship between Game Theoretic Semantics and Tarski's Compositional Semantics? As the following result, with which we end this subsection, shows, these two semantics are not in competition with each other: rather, they express \emph{precisely} the same truth and satisfaction conditions, albeit with different justifications! 
\begin{theorem}
For all models $M$, first order formulas $\psi$ with free variables in $\tuple v = v_1 \ldots v_n$ and assignments $s$ with domain $\{v_1 \ldots v_n\}$, $M \models_s \psi$ according to Tarski's Semantics if and only if $M \models^+_G \psi[s(\tuple v)/\tuple v]$ according to Game Theoretic Semantics. 
\end{theorem}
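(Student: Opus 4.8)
The plan is to prove the equivalence by structural induction on $\psi$, stating the claim uniformly over all assignments $s$ (equivalently, over all substitutions of elements of $M$ for the free variables) so that the inductive hypothesis is available for subformulas under modified assignments. Since $\psi[s(\tuple v)/\tuple v]$ replaces every free variable by an element of the model, at each stage we are really playing on a closed formula with parameters in $M$, which matches exactly the informal description of the semantic games given above. For the base case, if $\psi$ is an atom $R\tuple t$ or $t_1 = t_2$, then after substitution it is a basic assertion, and by definition the corresponding game is won by $\eloise$ precisely when the assertion holds in $M$; this is literally the Tarski satisfaction condition, so $M \models_s \psi$ iff $M \models^+_G \psi[s(\tuple v)/\tuple v]$.

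For the disjunction $\psi \vee \theta$, a winning strategy for $\eloise$ consists of a choice of disjunct together with a winning strategy in the corresponding subgame; hence $\eloise$ wins iff she wins the game for $\psi[s(\tuple v)/\tuple v]$ or the one for $\theta[s(\tuple v)/\tuple v]$, which by the inductive hypothesis is equivalent to $M \models_s \psi$ or $M \models_s \theta$, i.e.\ to $M \models_s \psi \vee \theta$. For the existential $\exists v\, \psi$, a winning strategy for $\eloise$ selects some element $m$ of $M$ together with a winning strategy for $\psi[m/v]$; so $\eloise$ wins iff there is an $m$ with (by the inductive hypothesis applied to $s[m/v]$) $M \models_{s[m/v]} \psi$, which is exactly the Tarski condition for $M \models_s \exists v\, \psi$.

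The one case requiring genuine care is negation, and this is the crux of the argument. Since the game for $\lnot \psi$ is the game for $\psi$ with the roles of the players exchanged, $\eloise$ has a winning strategy for $\lnot \psi$ iff $\abelard$ has a winning strategy for $\psi$, that is, iff $M \models^-_G \psi[s(\tuple v)/\tuple v]$. To convert this into a statement about $\eloise$'s strategies in the game for $\psi$ — which is what the inductive hypothesis speaks about — I would invoke determinacy (Zermelo's Theorem, as recorded in the determinacy proposition above): exactly one of the two players has a winning strategy, so $\abelard$ wins iff $\eloise$ does not, i.e.\ iff $M \not\models^+_G \psi[s(\tuple v)/\tuple v]$, which by the inductive hypothesis is $M \not\models_s \psi$, equivalently $M \models_s \lnot \psi$. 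The reliance on determinacy here is the main obstacle, and it is precisely the feature of perfect-information first-order games that makes the equivalence with the classical truth conditions go through; in the imperfect-information setting of the later sections this step is exactly what breaks down, which is the source of the paraconsistency to be developed.
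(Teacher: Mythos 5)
Your proposal is correct and takes essentially the same approach as the paper, whose entire proof is the remark that the claim follows by structural induction on $\psi$; you have simply filled in the details that the paper declares unproblematic. Your identification of negation as the one delicate case is exactly right, with the small precision that the step ``$\abelard$ has a winning strategy iff $\eloise$ does not'' needs both Zermelo determinacy (at least one player wins) and the paper's earlier non-contradiction proposition (never both), which together give the ``exactly one'' you invoke.
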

\begin{proof}
The proof is by structural induction on $\psi$ and poses no difficulties. 
\end{proof}
Hence, any argument that can be used for justifying the acceptability of Game Theoretic Semantics for First Order Logic can also be used to justify the acceptability of Tarski's Semantics, and vice versa. These two semantics are simply different ways of generating the same notion of truth: choosing to rely on one or on the other is at most a matter of mere convenience or of preference.

However, as we will see in the next subsections, things change once we start considering certain variants and extensions of First Order Logic. 
\subsection{Logics of Imperfect Information}
One aspect of First Order Logic which accounts for much of its expressive power is the fact that this formalism permits \emph{nested quantification}, and, in particular, \emph{alternation} between existential and universal quantifiers. Through this device, it is possible to specify complex patterns of \emph{dependence and independence} between variables: for example, in the sentence 
\begin{equation}
	\label{eq:ex1}
	\forall x \exists y \forall z \exists w R(x, y, z, w),
\end{equation}
the existential variable $w$ is a function of both $x$ and $z$, whereas the existential variable $y$ is a function of $x$ alone. 

As Skolem's normal form for (\ref{eq:ex1}) illustrates, nested quantification can be understood as a restricted form of second-order existential quantification: indeed, the above sentence can be seen to be equivalent to 
\begin{equation}
\label{eq:sk1}
\exists f \exists g \forall x \forall z R(x, f(x), z, g(x,z)).
\end{equation}
In First Order Logic, the notion of quantifier dependence or independence is intrinsically tied to the notion of \emph{scope}: an existential quantifier $\exists y$ \emph{depends} on an universal quantifier $\forall x$ if and only if the former is in the scope of the latter. As observed by Henkin in \cite{henkin61}, these patterns can be made more general. In particular, one may consider \emph{branching quantifier expressions} of the form 
\begin{equation}
\label{eq:genquant}
\left(
	\begin{array}{c c c}
		Q_{11} x_{11} & \ldots & Q_{1m}  x_{1m}\\
		& \ldots\\
		Q_{n1} x_{n1} & \ldots & Q_{nm} x_{nm}
	\end{array}
\right),
\end{equation}
where each $Q_{ij}$ is $\exists$ or $\forall$ and all $x_{ij}$ are distinct. The intended interpretation of such an expression is that each $x_{ij}$ may depend on all $x_{ij'}$ for $j' < j$, but \emph{not} on any $x_{i'j'}$ for $i' \not = i$: for example, in the sentence
\begin{equation}
\label{eq:ex2}
\left(
	\begin{array}{c c}
		\forall x & \exists y\\
		\forall z & \exists w
	\end{array}
\right)
R (x, y, z, w)
\end{equation}
the variable $y$ depends on $x$ but not on $z$, and the variable $w$ depends on $z$ but not on $x$, and hence the corresponding Skolem expression is
\begin{equation}
\label{eq:sk2}
\exists f \exists g \forall x \forall z R(x, f(x), z, g(z))
\end{equation}
If, as we said, quantifier alternation in First Order Logic can be understood as a restricted form of second order existential quantification, then, as a comparison between (\ref{eq:sk1}) and (\ref{eq:sk2}) makes clear, allowing branching quantifiers can be understood as a weakening of these restrictions.

How restricted is second order existential quantification in Branching Quantifier Logic, that is, in First Order Logic extended with branching quantifier?

As proved by Enderton and Walkoe in \cite{enderton70} and \cite{walkoe70}, the answer is \emph{not restricted at all}! Branching Quantifier Logic is precisely as expressive as Existential Second Order Logic ($\Sigma_1^1$). Hence, Branching Quantifier Logic can be understood as an alternative approach to the study of $\Sigma_1^1$, of its fragments and of its extensions; and indeed, much of the research done on the subject can be seen as an attempt to study $\Sigma_1^1$ through the lens of these variants of first-order logic.

Independence Friendly Logic \cite{hintikkasandu89,hintikka96,mann11}, also called IF Logic, can be thought of as a linearization of Branching Quantifier Logic: rather than dealing the unwieldy quantifier matrices of (\ref{eq:genquant}), Hintikka and Sandu introduced \emph{slashed quantifiers} $\exists v / V$ with the intended interpretation of ``there exists a $v$, chosen \emph{independently} from the variables in $V$''. For example, the sentence (\ref{eq:ex2}) can be translated in IF Logic as 
\begin{equation}
\label{eq:ex3}
\forall x \exists y \forall z (\exists w / \{x, y\}) R(x, y, z, w)
\end{equation}
This - at first sight entirely unproblematic - modification led to a number of important innovations on the semantic side. 

Game-theoretical explanations for the semantics of branching quantifiers predate the development of IF Logic; but it is with IF Logic that the Game Theoretic Semantics for First Order Logic was extended and adapted to a logic of imperfect information in a formal way. In Subsection \ref{subsect:gts}, we will present in some detail a successor of the Game Theoretic Semantics for IF Logic; but in brief, in the Game Theoretic Semantics for IF Logic, slashed quantifiers correspond to \emph{imperfect information moves} in which the corresponding player has to select a value for the quantified variable \emph{without} having access to the values of the slashed variables. 

Dependence Logic \cite{vaananen07,vaananen10}, which we will start discussing in the next subsection, is another variant of First Order Logic which adds to it the possibility of expressing more patterns of dependence and independence between variables. Rather than doing so by adding new forms of quantification, as we will see, Dependence Logic isolates the notion of dependence in some new \emph{dependence atoms}: this approach - which, as discussed in \cite{vaananen07}, is equivalent to the one of IF Logic, in the sense that any IF Logic sentence is equivalent to a Dependence Logic sentence and vice versa - proved itself most fruitful, and sparked the development of a number of technical results about the properties of this formalism and its extensions and variants.\footnote{We cannot give here a comprehensive account of this rapidly growing area of research: but as a possible starting point, and with no pretense of completeness, we can refer the interested reader to \cite{vaananen07,vaananen07b,abramsky09,kontinenv09,kontinennu09,kontinenv10,gradel13,durand11,engstrom12,galliani12,galliani13}.}
\subsection{Dependence Logic: Syntax}
Dependence Logic is a conservative extension of First Order Logic which adds to it \emph{dependence atoms} $=\!\!(\tuple t_1, \tuple t_2)$, with the intended interpretation of ``the value of $\tuple t_2$ is a function of the value of $\tuple t_1$'':\footnote{In some versions of Dependence Logic, only atoms of the form $=\!\!(\tuple t_1, t')$ are allowed, where $t'$ is a single term. This makes no real difference, as $=\!\!(\tuple t_1, \tuple t_2)$ is easily seen to be equivalent to $\bigwedge_{t' \in \tuple t_2} =\!\!(\tuple t_1, t')$.} for example, Equations (\ref{eq:ex2}) and (\ref{eq:ex3}) would be translated in Dependence Logic as 
\begin{equation}
\forall x \exists y \forall z \exists w(=\!\!(z, w) \wedge R(x, y, z, w))
\end{equation}
where the fact that the choice of $w$ may depend only on $z$ is represented by $=\!\!(z,w)$. 

More formally, the syntax of Dependence Logic can be given as follows:
\begin{definition}[Syntax]
Let $\Sigma$ be a first order signature. Then the set of all Dependence Logic formulas over $\Sigma$ is given by 
\[
	\psi ::= R \tuple t ~|~  t_1 = t_2 ~|~ =\!\!(\tuple t_1, \tuple t_2) ~|~ \lnot \psi ~|~ \psi \vee \psi ~|~ \exists v \psi
\]
where $R$ ranges over all relation symbols in $\Sigma$, $\tuple t$ ranges over all tuples of terms of signature $\Sigma$ and of the required arities, $t_1$ and $t_2$ range over terms, $\tuple t_1$ and $\tuple t_2$ range over all tuples of terms and $v$ ranges over all variables. 
\end{definition}
As usual, we will write $t_1 \not = t_2$ as a shorthand for $\lnot (t_1 = t_2)$, $\psi \wedge \theta$ as a shorthand for $\lnot ((\lnot \psi) \vee (\lnot \theta))$ and $\forall v \psi$ as a shorthand for $\lnot (\exists v (\lnot \psi))$. 

\begin{definition}[Free Variables and Sentences]
Let $\psi$ be a Dependence Logic formula over a signature $\Sigma$. Then the set $\free(\psi)$ of the \emph{free variables} of $\psi$ is defined inductively as follows:
\begin{enumerate}
\item For all atomic formulas $R \tuple t$, $\free(R \tuple t)$ is the set of all variables occurring in $\tuple t$;
\item $\free(t_1 = t_2)$ is the set of all variables occurring in $t_1$ and $t_2$;
\item $\free(=\!\!(\tuple t_1, \tuple t_2))$ is the set of all variables occurring in $\tuple t_1$ and $\tuple t_2$;
\item $\free(\lnot \psi) = \free(\psi)$;
\item $\free(\psi \vee \theta) = \free(\psi) \cup \free(\theta)$;
\item $\free(\exists v\psi) = \free(\psi) \bs \{v\}$.
\end{enumerate}
A formula $\psi$ is said to be a \emph{sentence} if $\free(\psi) = \emptyset$.
\end{definition}
\subsection{Dependence Logic: Game Theoretic Semantics}
\label{subsect:gts}
In this section, we will briefly recall the game theoretic semantics for Dependence Logic. Our treatment will be entirely similar to the one of \cite{vaananen07}, and we refer to it for a more detailed exposition.

As in the case of the Game Theoretic Semantics of First Order Logic, our games will involve two players, the existential one (called \eloise) who attempts to demonstrate the truth of a sentence in a model, and the universal one (called \abelard) who attempts to demonstrate its falsity. However, differently from the case of First Order Logic, our games will be \emph{games of imperfect information}: 
\begin{definition}[Semantic Games]
Let $M$ be a first order model and let $\phi$ be a Dependence Logic sentence over the signature of $M$. Then the \emph{semantic game} $G^M(\phi)$ is defined as follows: 
\begin{itemize}
\item Positions are triples $(\psi, s, \alpha)$, where $\psi$ is an instance\footnote{Different instances of a subformula inside of a formula - for example, the two occurrences of $Px$ in $Px \vee Px$ - correspond to different positions of our games. With an abuse of notation, we will write instances of subformulas as if they were simply subformulas.} of a subformula of $\phi$, $s$ is a variable assignment and $\alpha \in \{\eloise, \abelard\}$;
\item The initial position is $(\phi, \emptyset, \eloise)$;
\item The rules of the game are as follows: 
\begin{enumerate}
\item If the current position is of the form $(R \tuple t, s, \alpha)$, then Player $\alpha$ wins if and only if $M \models_s R \tuple t$ in the usual first order sense. Otherwise, Player $\alpha^*$ wins, where $\eloise^* = \abelard$ and $\abelard^* = \eloise$.
\item If the current position is of the form $(t_1 = t_2, s, \alpha)$, Player $\alpha$ wins if and only if $M \models_s t_1 = t_2$ in the usual first order sense. Otherwise, Player $\alpha^*$ wins. 
\item If the current position is of the form $(=\!\!(\tuple t_1, \tuple t_2), s, \alpha)$, Player $\alpha$ wins.
\item If the current position is of the form $(\lnot \psi, s, \alpha)$, then the next position is of the form $(\psi, s, \alpha^*)$.
\item If the current position is of the form $(\psi \vee \theta, s, \alpha)$, then the next position is either $(\psi, s, \alpha)$ or $(\theta, s, \alpha)$, and Player $\alpha$ chooses. 
\item If the current position is of the form $(\exists v\psi, s, \alpha)$, Player $\alpha$ picks an element $m \in \dom(M)$. The next position is then $(\psi, s[m/v], \alpha)$.
\end{enumerate}
\end{itemize}
\end{definition}
The definitions of \emph{strategy} and \emph{play} can be given as usual, and we will not report them here. For the sake of generality we will admit \emph{nondeterministic} strategies, which allow for the selection of multiple possible successors of a given position.\footnote{This is of no consequence for case of Dependence Logic proper, but it is useful in order to preserve the property of locality (Theorem \ref{theo:local} here) in the case of other team-semantics based extensions of first-order logic. See \cite{galliani12} for details.} However, in order to represent the intended meaning of dependence atoms we need the following notion of \emph{uniform strategy}: 
\begin{definition}
A strategy $\sigma$ for Player $\alpha \in \{\abelard, \eloise\}$ is \emph{uniform} if for any two plays $\tuple p_1, \tuple p_2$ in which Player $\alpha$ follows $\sigma$ and which end with $(=\!\!(\tuple t_1, \tuple t_2), s, \alpha)$ and $(=\!\!(\tuple t_1, \tuple t_2), s', \alpha)$ respectively, for the same instance of the dependence atom,
\[
	\tuple t_1\langle s\rangle = \tuple t_1\langle s'\rangle \Rightarrow \tuple t_2\langle s \rangle = \tuple t_2\langle s'\rangle.
\]
\end{definition}
The rationale behind this condition should be clear: a strategy for $\alpha$ is uniform if and only if when Player $\alpha$ follows it and reaches a dependence atom $=\!\!(\tuple t_1, \tuple t_2)$, he or she can guarantee that the value of $\tuple t_2$ will be a function of the value of $\tuple t_1$.

Truth and falsity are then defined in terms of the existence of uniform winning strategies: 
\begin{definition}
For any sentence $\phi$ and model $M$, we say that $\phi$ is \emph{true} in $M$, and we write $M \models^+ \phi$, if and only if $\eloise$ has a uniform winning strategy in $G^M(\phi)$.

Similarly, $\phi$ is \emph{false} in $M$ if and only if $\abelard$ has a uniform winning strategy in $G^M(\phi)$; and in that case, we write $M \models^- \phi$.
\end{definition}
The following result then follows at once from the definition of the game theoretic semantics: 
\begin{proposition}
For all $M$ and $\phi$ over the signature of $M$, $\phi$ is true if and only if $\lnot \phi$ is false and $\phi$ is false if and only if $\lnot \phi$ is true. 
\end{proposition}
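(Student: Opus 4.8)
The plan is to exploit the fact that the semantic game $G^M(\lnot\phi)$ is, up to a single forced opening move, literally the game $G^M(\phi)$ with the two players interchanged. Indeed, $G^M(\lnot\phi)$ starts at $(\lnot\phi, \emptyset, \eloise)$, and by the negation rule (rule 4) the unique next position is $(\phi, \emptyset, \abelard)$; no player makes any choice at this step. Compare this with $G^M(\phi)$, whose initial position is $(\phi, \emptyset, \eloise)$. Since every rule of the game refers to the player label $\alpha$ only symmetrically --- at an atom the player labelled $\alpha$ wins exactly when the atom holds in $M$, at a disjunction or an existential the player labelled $\alpha$ chooses, at a dependence atom the player labelled $\alpha$ wins outright, and negation simply replaces $\alpha$ by $\alpha^*$ --- the subtree of $G^M(\lnot\phi)$ rooted at $(\phi,\emptyset,\abelard)$ is isomorphic to the tree of $G^M(\phi)$ under the relabelling that sends every position $(\psi,s,\alpha)$ to $(\psi,s,\alpha^*)$.

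First I would make this relabelling precise as a bijection $\Phi$ between the plays of $G^M(\phi)$ and the plays of $G^M(\lnot\phi)$ (the latter all beginning with the forced step), and correspondingly between strategies: a strategy $\sigma$ for $\eloise$ in $G^M(\phi)$ is carried to a strategy $\sigma^*$ for $\abelard$ in $G^M(\lnot\phi)$, since the $\eloise$-labelled choice positions in $G^M(\phi)$ become exactly the $\abelard$-labelled choice positions in $G^M(\lnot\phi)$, and symmetrically $\abelard$-strategies for $\phi$ become $\eloise$-strategies for $\lnot\phi$. I would then check that $\Phi$ preserves the winning condition with the roles swapped: a play of $G^M(\phi)$ ending at an atom $(R\tuple t, s, \alpha)$ or $(t_1=t_2,s,\alpha)$ is won by $\eloise$ iff its image, ending at $(R\tuple t, s, \alpha^*)$ or $(t_1=t_2,s,\alpha^*)$, is won by $\abelard$, because the truth value of the atom in $M$ is unchanged while the label is swapped; and a play ending at a dependence atom $(=\!\!(\tuple t_1,\tuple t_2),s,\alpha)$ is won by the labelled player on both sides, so again the winner is swapped.

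The only point requiring a little care --- and what I would flag as the main (though still routine) obstacle --- is the preservation of \emph{uniformity}, since it is this condition, rather than the bare winning condition, that enforces the intended meaning of the dependence atoms. Here I would observe that uniformity of $\sigma$ for $\eloise$ is a condition quantified over pairs of $\sigma$-plays terminating at the same instance of a dependence atom with label $\eloise$; under $\Phi$ these correspond precisely to the pairs of $\sigma^*$-plays terminating at that instance with label $\abelard$, while the assignments $s,s'$ and the terms $\tuple t_1,\tuple t_2$ are carried over verbatim, so $\sigma$ is uniform iff $\sigma^*$ is. Combining these observations, $\eloise$ has a uniform winning strategy in $G^M(\phi)$ iff $\abelard$ has one in $G^M(\lnot\phi)$, i.e.\ $M\models^+\phi$ iff $M\models^-\lnot\phi$; and by the same argument applied with the roles exchanged, $M\models^-\phi$ iff $M\models^+\lnot\phi$, which are exactly the two claimed equivalences.
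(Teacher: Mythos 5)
Your proof is correct and follows essentially the same approach as the paper's: the paper likewise observes that $G^M(\lnot\phi)$ is played exactly like $G^M(\phi)$ except that the first (forced) move swaps the players' roles, so a uniform winning strategy for one player in $G^M(\phi)$ becomes a uniform winning strategy for the other player in $G^M(\lnot\phi)$. You merely spell out the relabelling bijection and the preservation of uniformity explicitly, details the paper leaves implicit.
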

\begin{proof}
Suppose that $M \models^+ \phi$, that is, that $\eloise$ has a uniform winning strategy in $G^M(\phi)$. Now, $G^M(\lnot \phi)$ is played exactly like $G^M(\phi)$, except for the fact that the first move of this game exchanges the roles of the two players; and therefore, $\abelard$ has a uniform winning strategy in this game, and hence $M \models^- \lnot \phi$. 

The other direction and the second biimplication are verified similarly.
\end{proof}
The fact that we are placing restrictions on which strategies we allow accounts for the possible existence of \emph{truth gaps}. For example, consider the sentence $\forall x \exists y ( =\!\!(\emptyset, x) \wedge x = y)$. With respect to a model $M$ with domain $\{0, 1\}$, the semantic game corresponds to the \emph{matching pennies} game: 
\begin{enumerate}
\item First, $\abelard$ selects an element $m_1 \in \{0,1\}$; 
\item Then, $\eloise$ selects another element $m_2 \in \{0, 1\}$, independently on $\abelard$'s choice:\footnote{If the choice of $\eloise$ is dependent on the choice of $\abelard$ then $\eloise$'s strategy is not uniform. Note that there is no need for $\abelard$'s strategy to ever pick the left conjunct $=\!\!(\emptyset, x)$, which corresponds to a winning position for $\eloise$; the mere fact that such a choice is \emph{possible} implies that $\eloise$'s move must be independent from $\abelard$'s.}
\item Then the game ends. If $m_1 = m_2$, $\eloise$ wins; otherwise, $\abelard$ wins.
\end{enumerate}
Clearly, neither player has a winning strategy for this game: no matter what value $\abelard$ picks, he cannot make it impossible for $\eloise$ to select the same value, but on the other hand since $\eloise$ is not allowed to spy $\abelard$'s choice she will not be able to guarantee that she will copy $\abelard$'s selection.
\subsection{Dependence Logic: Team Semantics}
The game theoretic semantics which we described in the previous section is a simple generalization of the one of First Order Logic. However, as we know, First Order Logic can also be given the equivalent, compositional \emph{Tarski Semantics}, in which open formulas correspond to \emph{satisfaction conditions} over variable assignments. Is there any way to extend Tarski's Semantics to the case of Dependence Logic? 

As \cite{cameron01} shows by means of a combinatorial argument, the answer is negative, as there exist models with more non-interchangeable Dependence Logic\footnote{Cameron and Hodges' proof refers to IF Logic instead; but their argument can be adapted with no difficulty to the case of Dependence Logic.} formulas with one free variable than sets of elements.\footnote{The proof does not generalize to infinite models; however, \cite{galliani11} extends this result to this further case by introducing a notion of \emph{sensible semantics}.} However, not all is lost: in \cite{hodges97}, Hodges developed a compositional semantics for Independence Friendly Logic, which we will call ``Team Semantics'' after \cite{vaananen07}, and whose Dependence Logic variant (also from \cite{vaananen07}) we will now describe. 

The fundamental intuition behind Team Semantics is to take in consideration \emph{teams}, that is, sets of assignments, and to generalize semantic games to initial assignments picked randomly from a team: 
\begin{definition}[Team]
Let $M$ be a first order model, and let $V$ be a set of variables. A \emph{team} over $M$ with domain $V$ is a set $X$ of variable assignments over $M$ with domain $V$.
\end{definition}
\begin{definition}[$G^M_X(\psi)$ and Team Satisfaction]
Let $M$ be a first order model, let $\psi$ be a Dependence Logic formula over its signature, and let $X$ be a team over $M$ such that $\free(\psi) \subseteq \dom(X)$. Then the game $G^M_X(\psi)$ is played precisely like $G^M(\psi)$, but from a starting position selected randomly among $\{(\psi, s, \eloise) : s \in X\}$.

If $\eloise$ has a uniform winning strategy in $G^M_X(\psi)$, we say that the team $X$ \emph{satisfies positively} (or just ``satisfies'') $\psi$ in $M$ and we write $(M, X) \models^+ \psi$; and similarly, if $\abelard$ has a uniform winning strategy in $G^M_X(\psi)$ we say that $X$ \emph{satisfies negatively} $\psi$ in $M$ and we write $(M, X) \models^- \psi$.
\end{definition}
Clearly, we have that for all models $M$ and sentences $\phi$, $M \models^\pm \phi$ if and only if $(M, \{\emptyset\}) \models^\pm \phi$.\footnote{Some care must be paid to distinguish the team $\{\emptyset\}$, which contains the empty assignment, from the empty team $\emptyset$. As we said, if $\phi$ is a sentence then $M \models^+ \phi$ if and only if $(M, \{\emptyset\}) \models^+ \phi$; but on the other hand, as we will see, $(M, \emptyset) \models^\pm \phi$ for all $\phi$.}

The advantage is that this satisfaction definition is compositional: 
\begin{theorem}
For all models $M$, teams $X$, relation symbols $R$ in its signature, tuples of terms $\tuple t$, $\tuple t_1$, $\tuple t_2$ and terms $t_1$, $t_2$ in its signature, and variables $v$,
\begin{description}
\item[TS-atom$^+$] $(M, X) \models^+ R \tuple t$ if and only if for all $s \in X$, $M \models_s R \tuple t$ in the usual first order sense; 
\item[TS-eq$^+$] $(M, X) \models^+ t_1 = t_2$ if and only if for all $s \in X$, $M \models_s t_1 = t_2$ in the usual first order sense;
\item[TS-dep$^+$] $(M, X) \models^+ =\!\!(\tuple t_1, \tuple t_2)$ if and only if for all $s, s' \in X$, 
\[
	\tuple t_1\langle s\rangle = \tuple t_1\langle s' \rangle \Rightarrow \tuple t_2\langle s\rangle = \tuple t_2\langle s'\rangle;
\]
\item[TS-neg$^+$] $(M, X) \models^+ \lnot \psi$ if and only if $(M, X) \models^- \psi$;
\item[TS-$\vee^+$] $(M, X) \models^+ \psi \vee \theta$ if and only if $X = X_1 \cup X_2$ for two subteams $X_1$ and $X_2$ such that $(M, X_1) \models^+ \psi$ and $(M, X_2) \models^+ \theta$; 
\item[TS-$\exists^+$] $(M ,X) \models^+ \exists v \psi$ if and only if there exists a function $F: X \rightarrow \parts(\dom(M))\backslash \{\emptyset\}$ such that $(M, X[F/v]) \models^+ \psi$, where $X[F/v] = \{s[m/v] : s \in X, m \in F(s)\}$
\end{description}
and
\begin{description}
\item[TS-atom$^-$] $(M, X) \models^- R \tuple t$ if and only if for all $s \in X$, $M \not \models_s R \tuple t$ in the usual first order sense; 
\item[TS-eq$^-$] $(M, X) \models^- t_1 = t_2$ if and only if for all $s \in X$, $M \not \models_s t_1 = t_2$ in the usual first order sense;
\item[TS-dep$^-$] $(M, X) \models^- =\!\!(\tuple t_1, \tuple t_2)$ if and only if $X = \emptyset$;
\item[TS-neg$^-$] $(M, X) \models^- \lnot \psi$ if and only if $(M, X) \models^+ \psi$;
\item[TS-$\vee^-$] $(M, X) \models^- \psi \vee \theta$ if and only if $(M, X) \models^- \psi$ and $(M, X) \models^- \theta$;
\item[TS-$\exists^-$] $(M, X) \models^- \exists v \psi$ if and only if $(M, X[M/v]) \models^- \psi$, where $X[M/v] = \{s[m/v] : s \in X, m \in \dom(M)\}$.
\end{description}
\end{theorem}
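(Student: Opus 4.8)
The plan is to prove the theorem by a simultaneous structural induction on $\psi$, establishing the positive clauses (TS-atom$^+$ through TS-$\exists^+$) and the negative clauses (TS-atom$^-$ through TS-$\exists^-$) together, since the negation rule of the game ties the two families of clauses to one another. The organising principle throughout is that the starting position of $G^M_X(\psi)$ is selected adversarially among $\{(\psi, s, \eloise) : s \in X\}$, so a uniform winning strategy for a player must win simultaneously from \emph{every} $s \in X$, and in addition its behaviour at dependence atoms must cohere \emph{across} these parallel plays. This is exactly the mechanism that converts a condition about a single semantic game into a condition about the whole team.

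For the base cases I would argue as follows. At a first-order atom $R\tuple t$ or $t_1 = t_2$ the game halts at once, so $\eloise$ wins from $(\,\cdot\,, s, \eloise)$ iff the atom holds at $s$; demanding this for every $s \in X$ yields TS-atom$^+$ and TS-eq$^+$, while the dual demand on $\abelard$ yields the corresponding negative clauses. At a dependence atom $=\!\!(\tuple t_1, \tuple t_2)$ rule (3) awards every play to $\eloise$, so the only nontrivial content is the \emph{uniformity} requirement: two plays beginning at $s, s' \in X$ reach the same atom instance, and uniformity says $\tuple t_1\langle s\rangle = \tuple t_1\langle s'\rangle \Rightarrow \tuple t_2\langle s\rangle = \tuple t_2\langle s'\rangle$, which is precisely TS-dep$^+$. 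On the negative side $\abelard$ can win only if no such play exists at all, i.e.\ only if $X = \emptyset$, giving TS-dep$^-$.

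For the inductive step I would treat each connective according to which player controls its move. Negation is immediate: the first move of $G^M_X(\lnot\psi)$ swaps roles, so $\eloise$ has a uniform winning strategy there iff $\abelard$ has one in $G^M_X(\psi)$, which is TS-neg$^+$, and symmetrically TS-neg$^-$. For $\psi \vee \theta$ the disjunction move belongs to $\eloise$: positively, her choices split $X$ into $X_1$ (sent left) and $X_2$ (sent right) with $X = X_1 \cup X_2$ — a union rather than a disjoint partition, since nondeterministic strategies permit a single $s$ to be sent to both subgames — and her residual strategies are uniform winning strategies on $G^M_{X_1}(\psi)$ and $G^M_{X_2}(\theta)$, which glue back together; negatively, $\abelard$ cannot influence $\eloise$'s choice, so he must win on the full team $X$ in \emph{both} subgames, giving TS-$\vee^-$. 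The existential quantifier is analogous: positively, $\eloise$'s choices define a map $F : X \to \parts(\dom(M)) \setminus \{\emptyset\}$, nonempty because she must move, and her residual strategy is a uniform winning strategy on $G^M_{X[F/v]}(\psi)$; negatively, $\abelard$ must win against every choice $\eloise$ can make, i.e.\ on the team $X[M/v]$, giving TS-$\exists^-$.

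The main obstacle — and the only step beyond bookkeeping — is checking that \emph{uniformity localises} correctly across the positive disjunction and existential steps. I would exploit the fact that each \emph{instance} of a dependence atom occurs in exactly one of the two subformulas, so that the global uniformity constraint factors as a conjunction of per-subgame constraints; and the fact that, after the first move, the plays of $G^M_X(\psi\vee\theta)$ (respectively $G^M_X(\exists v\psi)$) that reach a given atom instance correspond bijectively to the plays of the relevant subgame on $X_1$ or $X_2$ (respectively on $X[F/v]$). This correspondence is precisely what licenses both directions of the argument: decomposing a uniform winning strategy on the whole game into uniform winning strategies on the pieces, and recombining pieces without manufacturing any new uniformity violation.
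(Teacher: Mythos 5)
Your proposal is correct and takes essentially the same route as the paper, whose own proof consists of the single remark that the result follows by structural induction on formulas, with details deferred to Hodges and V\"a\"an\"anen. Your write-up simply fills in that induction explicitly --- including the one genuinely delicate point, namely that uniformity constraints factor per atom \emph{instance} and hence localise correctly across the disjunction and existential steps --- so there is no gap and no divergence of method.
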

\begin{proof}
The proof is by structural induction on the formulas and poses no particular difficulties. We refer the interested reader to \cite{hodges97,vaananen07}.
\end{proof}
It is now easy to find the rules corresponding to our derived connectives and operators: 
\begin{corollary}~\\
\begin{description}
\item[TS-$\wedge^+$] $(M, X) \models^+ \psi \wedge \theta$ if and only if $(M, X) \models^+ \psi$ and $(M, X) \models^+ \theta$; 
\item[TS-$\wedge^-$] $(M, X) \models^- \psi \wedge \theta$ if and only if $X = X_1 \cup X_2$ for two subteams $X_1, X_2$ such that $(M, X_1) \models^- \psi$ and $(M, X_2) \models^- \theta$;
\item[TS-$\forall^+$] $(M, X) \models^+ \forall v \psi$ if and only if $(M, X[M/v]) \models^+ \psi$; 
\item[TS-$\forall^-$] $(M ,X) \models^- \forall v \psi$ if and only if there exists a function $F: X \rightarrow \dom(M)$ such that $(M, X[F/v]) \models^- \psi$.
\end{description}
\end{corollary}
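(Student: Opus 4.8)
The plan is to read each clause as an abbreviation and reduce it to the clauses already proved in the Theorem, using the definitions $\psi \wedge \theta := \lnot((\lnot\psi)\vee(\lnot\theta))$ and $\forall v\,\psi := \lnot\exists v(\lnot\psi)$ and then chaining the biconditionals TS-neg$^\pm$, TS-$\vee^\pm$ and TS-$\exists^\pm$. Three of the four clauses drop out immediately. For TS-$\wedge^+$ I would apply TS-neg$^+$, then TS-$\vee^-$, then TS-neg$^-$, obtaining that $(M,X)\models^+ \lnot((\lnot\psi)\vee(\lnot\theta))$ holds iff $(M,X)\models^-\lnot\psi$ and $(M,X)\models^-\lnot\theta$, i.e.\ iff $(M,X)\models^+\psi$ and $(M,X)\models^+\theta$. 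The clause TS-$\wedge^-$ is exactly dual, applying TS-neg$^-$, TS-$\vee^+$, TS-neg$^+$ to turn $(M,X)\models^-\psi\wedge\theta$ into the requirement that $X = X_1\cup X_2$ with $(M,X_1)\models^-\psi$ and $(M,X_2)\models^-\theta$. For TS-$\forall^+$, applying TS-neg$^+$, TS-$\exists^-$, TS-neg$^-$ in turn gives $(M,X)\models^+\forall v\,\psi$ iff $(M,X[M/v])\models^-\lnot\psi$ iff $(M,X[M/v])\models^+\psi$. None of these three derivations involves a choice of witnessing function, so no difficulty arises.

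The clause that needs care is TS-$\forall^-$. Unfolding in the same way and applying TS-neg$^-$, TS-$\exists^+$, TS-neg$^+$ yields: $(M,X)\models^-\forall v\,\psi$ iff there is a function $F : X \to \parts(\dom(M))\setminus\{\emptyset\}$ with $(M,X[F/v])\models^-\psi$. The clause as stated, however, asks for a single-valued witness $F : X \to \dom(M)$, so I must check that the set-valued and single-valued formulations are equivalent. One direction is trivial: a single-valued $F$ is, after identifying $F(s)$ with $\{F(s)\}$, an admissible set-valued witness yielding the same supplemented team. For the converse, given a set-valued $F$ with $(M,X[F/v])\models^-\psi$, I would choose for each $s\in X$ some $G(s)\in F(s)$, which is possible since each $F(s)$ is nonempty; then $X[G/v]\subseteq X[F/v]$, and it remains only to push negative satisfaction down to this subteam.

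The main obstacle is therefore establishing \emph{downward closure} of negative satisfaction: if $(M,Y)\models^-\psi$ and $Y'\subseteq Y$ then $(M,Y')\models^-\psi$. The cleanest route, given that this section is built on the game-theoretic definition, is to read it off directly from that definition: a uniform winning strategy for a player in $G^M_Y(\psi)$, restricted to those plays whose initial assignment lies in $Y'\subseteq Y$, is still a uniform winning strategy in $G^M_{Y'}(\psi)$, since restricting the set of starting positions can neither create new plays nor violate the uniformity condition. Alternatively one may prove downward closure purely compositionally, by simultaneous structural induction on $\psi$ for both $\models^+$ and $\models^-$ using the clauses of the Theorem: the atomic, equality and dependence-atom cases are immediate, disjunction and the existential are handled by intersecting the witnessing decomposition with $Y'$ or restricting the witnessing function to $Y'$, and the negation clauses TS-neg$^\pm$ are precisely what forces the two statements to be carried through the induction together. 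With downward closure of $\models^-$ available, the converse direction of the previous paragraph goes through and TS-$\forall^-$ follows, completing the corollary.
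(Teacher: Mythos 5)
Your proposal is correct and takes essentially the same route as the paper, which states this corollary without an explicit proof, treating it as immediate from unfolding the abbreviations $\psi \wedge \theta := \lnot((\lnot\psi)\vee(\lnot\theta))$ and $\forall v\,\psi := \lnot\exists v(\lnot\psi)$ through the clauses TS-neg$^{\pm}$, TS-$\vee^{\pm}$ and TS-$\exists^{\pm}$, exactly as you do. Your extra care on TS-$\forall^-$ is well placed and fills a step the paper silently glosses over: the unfolding literally produces a set-valued witness $F : X \rightarrow \parts(\dom(M))\setminus\{\emptyset\}$ rather than the single-valued $F : X \rightarrow \dom(M)$ of the stated clause, and bridging the two does require downward closure of $\models^-$ (the paper's Theorem \ref{theo:dc}, imported from V\"a\"an\"anen's book), which your game-theoretic restriction argument or your simultaneous induction establishes without any circularity.
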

We conclude this subsection by observing that there is a clear asymmetry between the positive and negative conditions for dependence atoms: an atom $=\!\!(\tuple t_1, \tuple t_2)$ is true in a team $X$ if and only if the atom satisfies the corresponding dependence condition, whereas it is false if and only if $X = \emptyset$.

This is a direct consequence of the definition of truth in terms of existence of winning strategies: in brief, while a restriction on the information available to a player can affect the existence or non-existence of winning strategies for her, it has no such effect on the potential existence of winning strategies for the \emph{other} player - after all, the first player might well ``get lucky'' and select the best response even in the absence of sufficient information.
\subsection{Some Known Results}
\label{subsect:prop}
In this subsection, we will recall some of the properties of Dependence Logic. Except where stated otherwise, all results are from \cite{vaananen07}.
\begin{theorem}[Locality]
\label{theo:local}
For all formulas $\psi$ of Dependence Logic, all models $M$ and all teams $X_1$ and $X_2$ whose restrictions to $\free(\phi)$ coincide, 
\[
	(M, X_1) \models^\pm \psi \Leftrightarrow (M, X_2) \models^\pm \psi.
\]
\end{theorem}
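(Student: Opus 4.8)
The plan is to proceed by structural induction on $\psi$, establishing the $\models^+$ and $\models^-$ statements simultaneously so that the induction hypothesis is available for both satisfaction relations at once. Throughout I write $X \upharpoonright V = \{s \upharpoonright V : s \in X\}$ for the restriction of a team to a set $V$ of variables, and for the formula currently under consideration I let $V$ denote its set of free variables, so that the hypothesis reads $X_1 \upharpoonright V = X_2 \upharpoonright V$. The base cases are immediate from the satisfaction clauses: the positive and negative conditions for $R\tuple t$ and $t_1 = t_2$ only inspect, for each $s$, the first-order fact $M \models_s R\tuple t$, which depends solely on $s \upharpoonright V$; the positive clause for $=\!\!(\tuple t_1, \tuple t_2)$ only compares the values $\tuple t_1\langle s\rangle$ and $\tuple t_2\langle s\rangle$, again determined by $s \upharpoonright V$; and the negative clause for a dependence atom reduces to $X = \emptyset$, which holds for $X_1$ iff it holds for $X_2$ since $X_i = \emptyset \Leftrightarrow X_i \upharpoonright V = \emptyset$.

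For the inductive step, the negation case is trivial: $\free(\lnot\psi) = \free(\psi)$ and TS-neg$^\pm$ merely swaps the two satisfaction relations, so the combined induction hypothesis applies directly. The negative clauses for disjunction and for the existential quantifier are also routine. For TS-$\vee^-$ the condition is a conjunction of two requirements on the \emph{same} team; since $\free(\psi)$ and $\free(\theta)$ are both contained in $V = \free(\psi\vee\theta)$, the restrictions of $X_1$ and $X_2$ still agree on each of $\free(\psi)$ and $\free(\theta)$, and the induction hypothesis transfers each conjunct. For TS-$\exists^-$ one checks that $X_1[M/v] \upharpoonright (V \cup \{v\}) = X_2[M/v] \upharpoonright (V \cup \{v\})$ with $V = \free(\exists v\psi)$, which follows because supplementing every assignment with \emph{all} of $\dom(M)$ commutes with restriction; since $\free(\psi) \subseteq V \cup \{v\}$, the induction hypothesis then applies to the supplemented teams.

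The crux of the argument lies in the two positive clauses TS-$\vee^+$ and TS-$\exists^+$, where a witnessing decomposition or choice function for $X_1$ must be transported to $X_2$. For disjunction, writing $V = \free(\psi\vee\theta)$ and given a split $X_1 = Y_1 \cup Y_2$ with $(M,Y_1)\models^+\psi$ and $(M,Y_2)\models^+\theta$, I will put $Z_i = \{s \in X_2 : s \upharpoonright V \in Y_i \upharpoonright V\}$ for $i = 1,2$; using $X_1 \upharpoonright V = X_2 \upharpoonright V$ and $Y_i \subseteq X_1$ one verifies that $X_2 = Z_1 \cup Z_2$ and that $Z_i \upharpoonright V = Y_i \upharpoonright V$, whence the restrictions of $Z_1$ and $Y_1$ to $\free(\psi)$, and of $Z_2$ and $Y_2$ to $\free(\theta)$, coincide, and the induction hypothesis yields $(M,Z_1)\models^+\psi$ and $(M,Z_2)\models^+\theta$. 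For the existential, writing $V = \free(\exists v\psi)$ and given $F \colon X_1 \to \parts(\dom(M)) \setminus \{\emptyset\}$ with $(M, X_1[F/v]) \models^+ \psi$, I will define $G \colon X_2 \to \parts(\dom(M)) \setminus \{\emptyset\}$ by $G(s) = \bigcup \{F(s') : s' \in X_1,\ s' \upharpoonright V = s \upharpoonright V\}$; this is nonempty for each $s$ because $s \upharpoonright V$ lies in $X_2 \upharpoonright V = X_1 \upharpoonright V$, so at least one such $s'$ exists. The key computation is that $X_2[G/v] \upharpoonright (V \cup \{v\}) = X_1[F/v] \upharpoonright (V \cup \{v\})$, after which the induction hypothesis, applied on $\free(\psi) \subseteq V \cup \{v\}$, gives $(M, X_2[G/v]) \models^+ \psi$ and hence $(M, X_2) \models^+ \exists v \psi$.

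These two constructions are the only genuinely nontrivial part of the argument, and I expect the bookkeeping of restrictions in the existential case --- checking that the coalesced function $G$ reproduces exactly the restricted supplemented team $X_1[F/v] \upharpoonright (V \cup \{v\})$ --- to be the main obstacle. Finally, since the hypothesis $X_1 \upharpoonright V = X_2 \upharpoonright V$ is symmetric in $X_1$ and $X_2$, each converse implication follows by interchanging the roles of the two teams, which completes the induction.
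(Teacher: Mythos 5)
Your proof is correct, and there is nothing in the paper to compare it against: the theorem appears under ``Some Known Results'' and is imported from \cite{vaananen07} without proof, so your structural induction is supplying an argument the paper omits. The two constructions you identify as the crux both check out: the pullback split $Z_i = \{s \in X_2 : s \upharpoonright V \in Y_i \upharpoonright V\}$ does satisfy $X_2 = Z_1 \cup Z_2$ and $Z_i \upharpoonright V = Y_i \upharpoonright V$, and the coalesced function $G(s) = \bigcup\{F(s') : s' \in X_1,\ s' \upharpoonright V = s \upharpoonright V\}$ does reproduce $X_1[F/v]$ up to restriction, in both cases because every restricted assignment on one side has at least one preimage on the other. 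One point worth making explicit: your existential construction works only because the paper adopts the \emph{lax} clause TS-$\exists^+$, with $F \colon X \to \parts(\dom(M)) \setminus \{\emptyset\}$, so that values coming from several assignments of $X_1$ with the same restriction can be merged into one set --- this is precisely what the paper's footnote about nondeterministic strategies preserving locality alludes to. Under the strict clause $F \colon X \to \dom(M)$ the union trick is unavailable, and one would instead only get $X_2[G/v] \upharpoonright (V \cup \{v\}) \subseteq X_1[F/v] \upharpoonright (V \cup \{v\})$, requiring an appeal to Downwards Closure (Theorem \ref{theo:dc}) to finish. Since your argument never invokes downwards closure, it has the added virtue of transferring verbatim to team-semantic logics that are not downwards closed, which is exactly the level of generality the lax semantics is designed for.
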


\begin{theorem}[First Order Formulas]
If $\psi$ is first order,
\begin{description}
\item[TS-FO$^+$] $(M, X) \models^+ \psi$ if and only if for all $s \in X$, $M \models_s \psi$ in the usual first order sense; 
\item[TS-FO$^-$] $(M, X) \models^- \psi$ if and only if for all $s \in X$, $M \not \models_s \psi$ in the usual first order sense.
\end{description}
\end{theorem}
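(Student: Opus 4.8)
The plan is to proceed by structural induction on $\psi$, establishing \textbf{TS-FO$^+$} and \textbf{TS-FO$^-$} \emph{simultaneously}. The simultaneity is essential: the clause \textbf{TS-neg$^+$} defines $\models^+$ for $\lnot\psi$ in terms of $\models^-$ for $\psi$, and \textbf{TS-neg$^-$} does the reverse, so neither of the two statements can be proved in isolation. Since $\psi$ is assumed first order it contains no dependence atoms, so the only cases to treat are the literals $R\tuple t$ and $t_1 = t_2$, negation, disjunction, and existential quantification.

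The base cases are immediate: \textbf{TS-atom$^+$}, \textbf{TS-eq$^+$}, \textbf{TS-atom$^-$} and \textbf{TS-eq$^-$} already assert exactly the pointwise conditions demanded by \textbf{TS-FO$^+$} and \textbf{TS-FO$^-$}. For negation one simply chains the team-semantic clause with the induction hypothesis: $(M,X)\models^+ \lnot\psi$ holds iff $(M,X)\models^-\psi$, which by the inductive hypothesis holds iff $M\not\models_s\psi$ for every $s\in X$, that is, iff $M\models_s\lnot\psi$ for every $s\in X$; the negative clause is symmetric.

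The two cases carrying the real content are the positive disjunction and the positive existential, in each case the direction that reconstructs a team-semantic witness from pointwise satisfaction. For $\psi\vee\theta$, given that $M\models_s \psi\vee\theta$ for all $s\in X$, I would split $X$ by setting $X_1 = \{s\in X : M\models_s\psi\}$ and $X_2 = \{s\in X : M\models_s\theta\}$; every $s\in X$ falls into at least one part, so $X = X_1\cup X_2$, and the induction hypothesis yields $(M,X_1)\models^+\psi$ and $(M,X_2)\models^+\theta$, which is precisely \textbf{TS-$\vee^+$}. For $\exists v\,\psi$, given that $M\models_s\exists v\,\psi$ for every $s\in X$, I would define $F(s) = \{m\in\dom(M) : M\models_{s[m/v]}\psi\}$; this set is nonempty for each $s$ by assumption, so $F$ is a legitimate function into $\parts(\dom(M))\bs\{\emptyset\}$, and by construction every assignment in $X[F/v]$ satisfies $\psi$ pointwise, whence $(M,X[F/v])\models^+\psi$ by the induction hypothesis. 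The converse directions, together with the negative clauses for $\vee$ and $\exists$, unfold directly: \textbf{TS-$\vee^-$} and \textbf{TS-$\exists^-$} quantify universally over the team (and, for $\exists$, over all of $\dom(M)$ via $X[M/v]$), so the pointwise characterization passes through without any choice having to be made.

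The only point requiring care---and the closest thing to an obstacle---is recognizing that the genuine flexibility of team semantics, namely the free choice of a splitting in \textbf{TS-$\vee^+$} and of a witnessing function in \textbf{TS-$\exists^+$}, collapses to pointwise behaviour precisely because $\psi$ is first order: the absence of dependence atoms means the induction hypothesis is a plain assignment-by-assignment statement, so the required witnesses can always be assembled one $s$ at a time. Were a dependence atom present, this assembly could fail, which is exactly why the flatness property is special to the first-order fragment.
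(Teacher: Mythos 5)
Your proof is correct. The paper itself gives no proof of this theorem---it appears in the ``Some Known Results'' subsection, whose results are all cited from V\"a\"an\"anen's book---and your simultaneous structural induction on both polarities (base clauses immediate, negation swapping $+$ and $-$, the pointwise split $X_1 = \{s : M \models_s \psi\}$, $X_2 = \{s : M \models_s \theta\}$ for disjunction, and the witness function $F(s) = \{m : M \models_{s[m/v]} \psi\}$ for the existential) is exactly the standard argument that the cited source supplies, so your write-up simply fills in the details the paper omits.
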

Therefore, a first order formula holds in a team only if it holds in all of its assignments, and it fails in it only if it fails for all of its assignments. This clarifies the reason for the presence of truth value gaps in this semantics: if a team contains assignments which satisfy a formula and assignments which do not satisfy it, then the formula is neither true nor false with respect to it. Hence, with respect to first order formulas, Team Semantics can be thought of as a form of \emph{supervaluationism}.

Furthermore, we have as an easy corollary of this result that 
\begin{corollary}
If $\phi$ is a first order sentence, $M \models^+ \phi$ according to the semantics of Dependence Logic if and only if $M \models \phi$ according to Tarski's Semantics. Similarly, $M \models^- \phi$ if and only if $M \not \models \phi$ according to Tarski's Semantics.
\end{corollary}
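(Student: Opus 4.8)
The plan is to read this off directly from the preceding First Order Formulas theorem together with the already-noted fact that, for a sentence $\phi$, the game $G^M(\phi)$ and the game $G^M_{\{\emptyset\}}(\phi)$ coincide, so that $M \models^\pm \phi$ if and only if $(M, \{\emptyset\}) \models^\pm \phi$. The only real content of the corollary is to observe how the universal quantification over assignments appearing in TS-FO$^\pm$ behaves when the team is the singleton $\{\emptyset\}$.

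First, for the positive claim, I would start from $M \models^+ \phi \Leftrightarrow (M, \{\emptyset\}) \models^+ \phi$ and apply TS-FO$^+$ to the team $X = \{\emptyset\}$, obtaining that this holds if and only if $M \models_s \phi$ in the usual first order sense for every $s \in \{\emptyset\}$. The team $\{\emptyset\}$ contains exactly one assignment, namely the empty assignment $\emptyset$, so this universal condition collapses to the single statement $M \models_\emptyset \phi$. Since $\phi$ is a sentence and hence $\free(\phi) = \emptyset$, its satisfaction under the empty assignment is by definition just $M \models \phi$ in Tarski's sense; chaining these equivalences gives the first half of the statement.

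Next, the negative claim proceeds in exactly the same way but through TS-FO$^-$: starting from $M \models^- \phi \Leftrightarrow (M, \{\emptyset\}) \models^- \phi$, the rule TS-FO$^-$ rewrites the right-hand side as $M \not\models_s \phi$ for every $s \in \{\emptyset\}$, which again collapses to the single condition $M \not\models_\emptyset \phi$, that is, $M \not\models \phi$ in Tarski's sense.

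The argument is genuinely routine and there is no real obstacle, but the one point that requires care, flagged already in the earlier footnote, is to keep the team $\{\emptyset\}$ distinct from the empty team $\emptyset$: it is essential that $\{\emptyset\}$ carries exactly one assignment, so that the universal quantifier in TS-FO$^\pm$ reduces to a single nonvacuous instance about $M \models_\emptyset \phi$, rather than being vacuously satisfied as it would be over $\emptyset$.
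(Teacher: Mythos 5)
Your proof is correct and is exactly the argument the paper intends (the paper states this as an immediate corollary of the First Order Formulas theorem without spelling it out): specialize TS-FO$^\pm$ to the team $\{\emptyset\}$, use the previously noted equivalence $M \models^\pm \phi \Leftrightarrow (M,\{\emptyset\}) \models^\pm \phi$ for sentences, and collapse the quantification over the singleton team to Tarski satisfaction under the empty assignment. Your flagged distinction between the team $\{\emptyset\}$ and the empty team $\emptyset$ is precisely the right point of care, matching the paper's own footnote.
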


However, in general the connectives of Dependence Logic behave rather differently from those of First Order Logic. In particular, disjunction is not idempotent: indeed, consider any model $M$ with two elements $0$ and $1$ and the team $X = \{(x := 0), (x :=1)\}$. Then we have that $(M, X) \models^+ =\!\!(\emptyset, x) \vee =\!\!(\emptyset, x)$, since $X$ can be split into two subteams $X_1 = \{(x :=0)\}$ and $X_2 = \{(x := 1)\}$ with respect to which $x$ is constant; but on the other hand, $x$ is not constant in $X$, and therefore $(M, X) \not \models^+ =\!\!(\emptyset, x)$.

\begin{theorem}[Downwards Closure]
\label{theo:dc}
For all formulas $\psi$, models $M$, $d \in \{+, -\}$ and teams $X, Y$, if $(M, X) \models^d \psi$ and $Y \subseteq X$ then $(M, Y) \models^d \psi$.
\end{theorem}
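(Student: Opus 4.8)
The plan is to prove the statement by structural induction on $\psi$, carrying both polarities $d = +$ and $d = -$ through the induction \emph{simultaneously}. This simultaneous treatment is essential: the negation clause \textbf{TS-neg}$^\pm$ exchanges the two polarities, so the positive case for $\lnot\psi$ reduces to the negative case for $\psi$ and conversely, and neither polarity can be handled in isolation.

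For the base cases I would unfold the team-semantic conditions directly. For a first-order atom $R\tuple t$ or $t_1 = t_2$, both $(M,X)\models^+$ and $(M,X)\models^-$ are conditions universally quantified over the assignments $s \in X$ (``$M\models_s$ for all $s$'' and ``$M\not\models_s$ for all $s$'', respectively), so they pass to any subteam $Y \subseteq X$ immediately. The dependence atom is equally direct: its positive condition is a universally quantified implication over \emph{pairs} $s,s' \in X$, which again survives restriction to $Y$, while its negative condition is simply $X = \emptyset$, whence $Y \subseteq X = \emptyset$ forces $Y = \emptyset$. The negation step is then the bookkeeping described above, and the negative clauses for disjunction (\textbf{TS-}$\vee^-$, a conjunction of two satisfaction claims about the whole team) and for the existential (\textbf{TS-}$\exists^-$, where $Y[M/v] \subseteq X[M/v]$ is immediate from the definition of $X[M/v]$) follow from the induction hypothesis with no extra construction.

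The hard part — and really the only place where one must genuinely \emph{use} the hypothesis $Y \subseteq X$ rather than merely propagate it — is in the two positive ``splitting'' clauses. For \textbf{TS-}$\vee^+$, from a witnessing decomposition $X = X_1 \cup X_2$ with $(M,X_1)\models^+\psi$ and $(M,X_2)\models^+\theta$ I would set $Y_1 = Y \cap X_1$ and $Y_2 = Y \cap X_2$; then $Y_1 \cup Y_2 = Y \cap (X_1 \cup X_2) = Y \cap X = Y$ with $Y_i \subseteq X_i$, and the induction hypothesis gives $(M,Y_1)\models^+\psi$ and $(M,Y_2)\models^+\theta$, so $Y$ satisfies the disjunction. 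For \textbf{TS-}$\exists^+$, given a choice function $F\colon X \to \parts(\dom(M))\bs\{\emptyset\}$ with $(M,X[F/v])\models^+\psi$, I would restrict it to $F' = F\restriction Y$; since $Y \subseteq X$ and $F'(s) = F(s)$ for $s \in Y$, every $s[m/v] \in Y[F'/v]$ satisfies $s \in X$ and $m \in F(s)$, so $Y[F'/v] \subseteq X[F/v]$, and the induction hypothesis yields $(M,Y[F'/v])\models^+\psi$, witnessing $(M,Y)\models^+\exists v\psi$. These two constructions are routine once set up correctly, so I expect no serious obstacle beyond the discipline of carrying both polarities in parallel and verifying the two subteam/subfunction containments.
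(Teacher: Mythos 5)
Your proof is correct and complete: the simultaneous induction over both polarities is exactly what the negation clause forces, the two genuinely non-trivial steps (intersecting the splitting $X = X_1 \cup X_2$ with $Y$ for \textbf{TS-}$\vee^+$, and restricting the choice function $F$ to $Y$ for \textbf{TS-}$\exists^+$) are set up properly, and the remaining clauses propagate as you say. Note, however, that the paper itself states this theorem without proof (it is imported from \cite{vaananen07}), and the style of argument it uses for the mirror-image result --- Theorem \ref{theo:uc}, upwards closure in Paraconsistent Team Semantics --- is different from yours: it is game-theoretic and needs no induction at all. The same one-liner works here: if player $\alpha$ has a uniform winning strategy $\sigma$ in $G^M_X(\psi)$ and $Y \subseteq X$, then $\sigma$ is already a uniform winning strategy in $G^M_Y(\psi)$, because every play of $G^M_Y(\psi)$ in which $\alpha$ follows $\sigma$ is also a play of $G^M_X(\psi)$ in which $\alpha$ follows $\sigma$ (the initial positions of the former game form a subset of those of the latter), and both the winning condition and the uniformity condition are universal statements over such plays, hence inherited under restriction. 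That argument is shorter and handles $d = +$ and $d = -$ at one stroke; what your compositional induction buys is independence from the game semantics: it works directly from the team-semantic clauses, so it applies verbatim in settings where those clauses are taken as the definition (as is standard in much of the team-semantics literature) or where no underlying game is available, at the price of verifying each connective case by hand.
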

The empty team $\emptyset$ satisfies all Dependence Logic formulas: 
\begin{proposition}
\label{propo:emptyset}
For all models $M$ and formulas $\phi$ over the signature of $M$, $(M, \emptyset) \models^+ \phi$ and $(M, \emptyset) \models^- \phi$. 
\end{proposition}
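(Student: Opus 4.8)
The plan is to prove both claims simultaneously by structural induction on $\phi$, reading off each case directly from the compositional satisfaction rules of the preceding Theorem together with the derived rules of the Corollary. The guiding observation, which makes every case routine, is that the empty team has no assignments: so each positive or negative clause whose condition has the form ``for all $s \in X$ \dots'' is satisfied vacuously by $\emptyset$, and each team-forming operation appearing in the semantics sends $\emptyset$ back to $\emptyset$.

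For the base cases, the clauses TS-atom$^+$, TS-eq$^+$, TS-dep$^+$ and likewise TS-atom$^-$, TS-eq$^-$ all quantify universally over the (absent) members of the team, so each is vacuously satisfied by $\emptyset$; in particular both $(M, \emptyset) \models^+ R\tuple t$ and $(M, \emptyset) \models^- R\tuple t$ hold, and similarly for equalities and for the positive dependence atom. The only base case not handled by vacuity is TS-dep$^-$, where $(M, \emptyset) \models^- =\!\!(\tuple t_1, \tuple t_2)$ holds precisely because its defining condition is $X = \emptyset$.

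For the inductive step, negation is immediate from TS-neg$^\pm$ together with the induction hypothesis applied with the roles of $+$ and $-$ exchanged. For disjunction, the positive clause TS-$\vee^+$ is witnessed by the trivial decomposition $\emptyset = \emptyset \cup \emptyset$, whose two halves satisfy $\psi$ and $\theta$ positively by the induction hypothesis, while the negative clause TS-$\vee^-$ follows at once, since both $(M, \emptyset) \models^- \psi$ and $(M, \emptyset) \models^- \theta$ hold by induction. For the existential quantifier, the positive clause TS-$\exists^+$ is witnessed by the unique (empty) function $F : \emptyset \rightarrow \parts(\dom(M)) \bs \{\emptyset\}$, for which $\emptyset[F/v] = \emptyset$, so the claim reduces to the induction hypothesis for $\psi$; and the negative clause TS-$\exists^-$ reduces in the same way, using $\emptyset[M/v] = \emptyset$.

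No step presents a genuine obstacle: the whole content of the proposition is that emptiness propagates through the semantic clauses, and the only point deserving a moment's attention is the positive existential case, where one must note that the empty function is an admissible choice of $F$ irrespective of $\dom(M)$. Alternatively, the statement can be read off the game-theoretic definition itself: in $G^M_\emptyset(\phi)$ the starting position is drawn from the empty set $\{(\phi, s, \eloise) : s \in \emptyset\}$, so the game has no plays whatsoever, and hence every strategy -- in particular a uniform one -- is vacuously winning for each of $\eloise$ and $\abelard$, giving both $(M, \emptyset) \models^+ \phi$ and $(M, \emptyset) \models^- \phi$.
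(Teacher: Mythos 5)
Your proof is correct. Note that the paper does not actually prove this proposition: it appears in the subsection of ``Some Known Results,'' which are imported from V\"a\"an\"anen's book without proof, so there is no in-paper argument to compare against. Your structural induction is the standard argument and every case checks out: the universally quantified clauses (TS-atom$^\pm$, TS-eq$^\pm$, TS-dep$^+$) hold vacuously over the empty team, TS-dep$^-$ holds by its very definition, and in the inductive steps you correctly observe that the relevant team-forming operations ($\emptyset = \emptyset \cup \emptyset$, the empty choice function $F$, and $\emptyset[M/v] = \emptyset$) all return the empty team, so the induction hypothesis applies; since $\wedge$ and $\forall$ are defined as abbreviations, your three inductive cases suffice. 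Your alternative game-theoretic argument is also sound and is in fact the justification the paper itself implicitly endorses: in $G^M_\emptyset(\phi)$ there are no plays at all, so every strategy is vacuously both uniform and winning for either player --- exactly the point the paper later makes in a footnote in the section on generalized dependencies, where it remarks that either player trivially has a winning strategy for games starting from the empty team. The one point worth flagging as deserving the care you gave it is the positive existential case: the clause TS-$\exists^+$ demands a function into $\parts(\dom(M))\bs\{\emptyset\}$, and the empty function is indeed an admissible witness, so no appeal to nonemptiness of the domain is needed.
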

As a consequence of this fact, the law of non-contradiction does not hold in Dependence Logic, strictly speaking: for any formula $\phi$, $(M, \emptyset) \models^+ \phi \wedge \lnot \phi$. Hence, Dependence Logic displays a (very weak) form of paraconsistency.

However, this following variant of non-contradiction holds:
\begin{theorem}[Law of Non-Contradiction]
\label{theo:lnc}
If $(M, X) \models^+ \phi$ and \\$(M, X) \models^- \phi$ then $X = \emptyset$.
\end{theorem}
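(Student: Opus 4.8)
The plan is to proceed by structural induction on $\phi$, following the primitive syntax of Dependence Logic with connectives $\lnot$, $\vee$ and $\exists$; since $\wedge$ and $\forall$ are mere abbreviations, no further cases will be needed. The inductive claim is exactly the statement of the theorem: if a team satisfies $\phi$ both positively and negatively, it is empty. The base cases fall straight out of the team semantics rules. For a first order atom $R\tuple t$ or $t_1 = t_2$, rule TS-atom$^+$ (resp.\ TS-eq$^+$) demands $M \models_s R\tuple t$ for every $s \in X$, while TS-atom$^-$ (resp.\ TS-eq$^-$) demands $M \not\models_s R\tuple t$ for every $s \in X$; these are jointly satisfiable only when $X$ contains no assignment, i.e.\ $X = \emptyset$. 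For a dependence atom $=\!\!(\tuple t_1, \tuple t_2)$ the matter is even more direct: rule TS-dep$^-$ asserts that $(M,X) \models^- =\!\!(\tuple t_1, \tuple t_2)$ holds if and only if $X = \emptyset$, so the negative hypothesis alone already yields the conclusion.

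For the inductive step, the negation case is essentially free. By TS-neg$^+$ the hypothesis $(M,X)\models^+\lnot\psi$ unfolds to $(M,X)\models^-\psi$, and by TS-neg$^-$ the hypothesis $(M,X)\models^-\lnot\psi$ unfolds to $(M,X)\models^+\psi$; thus the two assumptions about $\lnot\psi$ are precisely the two assumptions about $\psi$, and the induction hypothesis applies at once. The disjunction case is where Downward Closure (Theorem \ref{theo:dc}) first becomes essential. From $(M,X)\models^+\psi\vee\theta$ I obtain a splitting $X = X_1 \cup X_2$ with $(M,X_1)\models^+\psi$ and $(M,X_2)\models^+\theta$, while TS-$\vee^-$ supplies $(M,X)\models^-\psi$ and $(M,X)\models^-\theta$. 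Since $X_1 \subseteq X$, Downward Closure gives $(M,X_1)\models^-\psi$, and together with $(M,X_1)\models^+\psi$ the induction hypothesis forces $X_1 = \emptyset$; symmetrically $X_2 = \emptyset$, whence $X = \emptyset$.

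The existential case is the one I expect to require the most care, and it is where the asymmetry between the positive and negative rules for $\exists$ must be reconciled. Positive satisfaction $(M,X)\models^+\exists v\psi$ supplies a function $F\colon X \to \parts(\dom(M))\setminus\{\emptyset\}$ with $(M,X[F/v])\models^+\psi$, whereas negative satisfaction $(M,X)\models^-\exists v\psi$ gives only $(M,X[M/v])\models^-\psi$. The key observation is that $X[F/v] \subseteq X[M/v]$, because each value $F(s)$ is a subset of $\dom(M)$; hence Downward Closure propagates the negative satisfaction down to $(M,X[F/v])\models^-\psi$. Combined with the positive satisfaction and the induction hypothesis, this forces $X[F/v] = \emptyset$. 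The final and genuinely load-bearing step is to transfer this emptiness back to $X$: since $F$ takes only \emph{nonempty} values, any $s \in X$ together with any $m \in F(s)$ would contribute an assignment $s[m/v]$ to $X[F/v]$, so $X[F/v] = \emptyset$ is possible only when $X$ itself is empty. Exploiting the nonemptiness built into the range of $F$ in exactly this way is the crux of the whole argument, and it completes the induction.
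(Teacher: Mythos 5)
Your proof is correct, and every case checks out: the atomic and dependence-atom cases follow directly from the semantic clauses, negation swaps the two hypotheses, and your use of Downward Closure (Theorem \ref{theo:dc}) in the disjunction and existential cases is legitimate, since that theorem is established independently of the Law of Non-Contradiction (no circularity). Note, however, that the paper itself does not prove this statement at all; it is imported from V\"a\"an\"anen's monograph \cite{vaananen07}, where the standard argument differs from yours in one structural respect: instead of proving the theorem as stated and patching the inductive cases with Downward Closure, one strengthens the induction hypothesis to ``if $(M,X) \models^+ \phi$ and $(M,Y) \models^- \phi$ then $X \cap Y = \emptyset$.'' With that formulation the disjunction case is immediate ($X_1 \cap Y = X_2 \cap Y = \emptyset$ gives $(X_1 \cup X_2) \cap Y = \emptyset$) and the existential case follows by picking any $s \in X \cap Y$ and any $m \in F(s) \neq \emptyset$, so that $s[m/v]$ would lie in both $X[F/v]$ and $Y[M/v]$; no appeal to Downward Closure is needed anywhere. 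Your route buys modularity (you reuse a theorem the paper already states), while the strengthened induction buys self-containment and a slightly more general statement; the step you single out as the crux --- exploiting that $F$ takes nonempty values --- is indeed the load-bearing point in both versions.
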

In particular, this implies that for any sentence $\phi$, it is never the case that $M \models^+ \phi$ and $M \models^- \phi$: no sentence is both true and false (although, as we saw, there exist sentences which are neither). 

Note, however, that $\lnot(\phi \wedge \lnot \phi)$ is not, in general, valid.
 For example, let $M$ contain at least two elements and take $ \phi = \forall x \exists y (=\!\!(\emptyset, y) \wedge x = y)$: as we saw, neither player has a winning strategy for $G^M(\phi)$, and it follows easily that the same holds for $G^M(\lnot (\phi \wedge \lnot \phi))$. 

The same choice of $\phi$, as an aside, also shows that $\phi \vee \lnot \phi$ is not a theorem of Dependence Logic; but due to the fact that Dependence Logic admits truth gaps, this is less surprising. 

What is, instead, highly surprising is the following result from \cite{kontinenv10}, which shows that the negation of Dependence Logic - which coincides to the usual contradictory negation over first order sentences - behaves, in general, very differently indeed from the contradictory negation:
\begin{theorem}
\label{theo:DL_neg_nonsem}
Let $\psi_1$ and $\psi_2$ be two formulas such that $(M, X) \models^+ \psi_1 \wedge \psi_2$ only if $X = \emptyset$. Then there exists a Dependence Logic expression $\theta$ such that 
\[
	(M, X) \models^+ \theta \Leftrightarrow (M, X) \models^+ \psi_1
\]
and 
\[
	(M, X) \models^- \theta \Leftrightarrow (M, X) \models^+ \psi_2
\]
for all models $M$ whose signature contains the ones of $\psi_1$ and $\psi_2$ and for all $X$ with domain containing $\free(\psi_1) \cup \free(\psi_2)$.
\end{theorem}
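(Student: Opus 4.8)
Throughout write $\|\psi\|^+$ and $\|\psi\|^-$ for the classes of teams $X$ (over a fixed model $M$, with $\free(\psi)\subseteq\dom(X)$) for which $(M,X)\models^+\psi$ and $(M,X)\models^-\psi$; by Theorem~\ref{theo:dc} and Proposition~\ref{propo:emptyset} these are downward closed and contain the empty team. The goal is a single $\theta$ with $\|\theta\|^+=\|\psi_1\|^+$ and $\|\theta\|^-=\|\psi_2\|^+$ in every model. The hypothesis is exactly what makes this consistent with the law of non-contradiction: by Theorem~\ref{theo:lnc} any $\theta$ satisfies $\|\theta\|^+\cap\|\theta\|^-=\{\emptyset\}$, and ``$(M,X)\models^+\psi_1\wedge\psi_2$ only if $X=\emptyset$'' says precisely $\|\psi_1\|^+\cap\|\psi_2\|^+=\{\emptyset\}$. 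The natural idea is to let $\psi_1$ carry the positive side and to exploit that $(M,X)\models^-\lnot\psi_2$ iff $(M,X)\models^+\psi_2$, so that $\lnot\psi_2$ already has the desired negative extension $\|\psi_2\|^+$; the task is then to fuse the positive behaviour of $\psi_1$ with the negative behaviour of $\lnot\psi_2$.

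First I would build a gadget that can trivialise one polarity. Let $\mu$ be a matching-pennies sentence (e.g.\ the one of Subsection~\ref{subsect:gts}, with bound variables renamed fresh), so that in any $M$ with at least two elements $\|\mu\|^+=\|\mu\|^-=\{\emptyset\}$ by locality (Theorem~\ref{theo:local}). Using the team-semantic clauses one checks that $\psi_1\vee\mu$ has $\|\psi_1\vee\mu\|^+=\|\psi_1\|^+$ but $\|\psi_1\vee\mu\|^-=\|\psi_1\|^-\cap\{\emptyset\}=\{\emptyset\}$, i.e.\ it keeps the positive extension of $\psi_1$ while emptying its negative one; dually $\lnot\psi_2\wedge\mu$ keeps the negative extension $\|\psi_2\|^+$ while emptying its positive one. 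Two candidates then suggest themselves:
\[
\theta_A=\psi_1\vee(\lnot\psi_2\wedge\mu),\qquad \theta_B=(\psi_1\vee\mu)\wedge\lnot\psi_2 .
\]
A direct calculation with the clauses TS-$\vee^{\pm}$ and TS-$\wedge^{\pm}$ gives $\|\theta_A\|^+=\|\psi_1\|^+$ (the guard forces the split in TS-$\vee^+$ to put the whole team into the $\psi_1$-disjunct) and $\|\theta_A\|^-=\|\psi_1\|^-\cap\|\psi_2\|^+$; symmetrically $\|\theta_B\|^-=\|\psi_2\|^+$ and $\|\theta_B\|^+=\|\psi_1\|^+\cap\|\psi_2\|^-$. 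So each candidate already gets one of the two required identities exactly right.

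I expect the main obstacle to be that each candidate is wrong on the \emph{other} polarity by precisely the ``gap'' teams: $\|\theta_A\|^-$ misses those $X\in\|\psi_2\|^+$ at which $\psi_1$ is undefined ($X\notin\|\psi_1\|^-$), and $\|\theta_B\|^+$ misses those $X\in\|\psi_1\|^+$ at which $\psi_2$ is undefined. The disjointness hypothesis only forbids $X$ from being positive for both $\psi_1$ and $\psi_2$; it does \emph{not} force $\|\psi_1\|^+\subseteq\|\psi_2\|^-$ (taking $\psi_2$ to be a matching-pennies formula, for which every nonempty team is a gap, and $\psi_1 := x=x$, is a counterexample). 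The difficulty is that any binary connective combines its two operands on each side through either an intersection or a team split (TS-$\vee$, TS-$\wedge$), so a guard that repairs one polarity necessarily shrinks the other, and every attempted propositional repair reduces circularly to realising a formula whose positive extension contains $\|\psi_1\|^+$ and whose negative extension contains $\|\psi_2\|^+$ --- which is the original problem. In particular one cannot simply enlarge a falsity set to the complement of $\|\psi_1\|^+$, since by Theorem~\ref{theo:dc} negative extensions must remain downward closed whereas such a complement is upward closed.

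The plan to break the circularity is to stop combining $\psi_1$ and $\lnot\psi_2$ propositionally and instead reshape the team with a fresh variable $p$: quantify $p$ and guard the two disjuncts by a first-order condition on $p$, together with a dependence atom tying $p$ to the free variables, so that $\eloise$, when she verifies, routes a full copy of the team into the $\psi_1$-subgame, while $\abelard$, when he refutes, is forced to exhibit a winning strategy for the $\psi_2$-subgame on a full copy of the team; the point being that the two evaluations are carried out on separate $p$-fibres and so no longer see each other's gaps. A cleaner alternative is to invoke the characterisation of positive extensions of Dependence Logic formulas as the downward-closed, $\emptyset$-containing, $\Sigma_1^1$-definable classes of teams, obtain defining formulas $\Phi_1,\Phi_2$ for $\|\psi_1\|^+$ and $\|\psi_2\|^+$, and assemble a single $\theta$ whose verification game tests $\Phi_1$ and whose refutation game tests $\Phi_2$, the disjointness of $\Phi_1,\Phi_2$ guaranteeing the law of non-contradiction. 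Either way the real work --- and the step I expect to be hardest --- is the reshaping that decouples the two polarities; and it is worth noting that at least two-element models are needed throughout, since in a one-element model every formula is gap-free and the statement would already fail for $\psi_1=\psi_2=\lnot(x=x)$.
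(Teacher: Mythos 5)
There is a genuine gap: your write-up is an analysis plus a plan, not a proof. You yourself flag that ``the reshaping that decouples the two polarities'' is the hardest step, and you leave it undone --- but that step is not a detail, it is the entire content of the theorem. Note that the paper does not prove this statement at all: it imports it from Kontinen and V\"a\"an\"anen \cite{kontinenv10}, whose argument requires the full strength of the $\Sigma_1^1$ representation machinery (Theorems \ref{theo:DLtoSigma11} and \ref{theo:Sigma11toDL}): one takes $\Sigma_1^1$ definitions $\Phi_1(R)$, $\Phi_2(R)$ of the two positive extensions and then carries out an explicit syntactic construction of a single formula whose verifying games encode $\Phi_1$ while its falsifying games encode $\Phi_2$. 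Your own analysis correctly explains why this cannot be sidestepped propositionally: each connective computes one polarity by intersection and the other by a team split, so any guard that fixes one side contaminates the other --- e.g.\ your $\theta_A$ has falsity set $\|\psi_1\|^-\cap\|\psi_2\|^+$, and every attempted repair circles back to realising a pair of prescribed extensions, which is the original problem. But showing that the easy roads are blocked is not the same as building the hard one; as it stands the proposal establishes nothing beyond what the naive candidates $\theta_A,\theta_B$ already give, and your first escape route (a fresh variable $p$ routing the two evaluations onto separate fibres) is stated as a hope, with no verification that the fibres actually decouple --- which is precisely what fails in every computation you did carry out.

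That said, the partial work is sound and well aimed. The computations $\|\theta_A\|^+=\|\psi_1\|^+$, $\|\theta_A\|^-=\|\psi_1\|^-\cap\|\psi_2\|^+$, $\|\theta_B\|^-=\|\psi_2\|^+$, $\|\theta_B\|^+=\|\psi_1\|^+\cap\|\psi_2\|^-$ are correct; the counterexample ($\psi_1 := x=x$, $\psi_2 :=$ matching pennies) showing the hypothesis does not give $\|\psi_1\|^+\subseteq\|\psi_2\|^-$ is correct; and of your two suggested strategies, the second (route everything through the $\Sigma_1^1$ characterisation) is indeed the road the cited proof takes, so the plan points in the right direction. Your closing remark about one-element models is also a legitimate observation about the statement as given: for $\psi_1=\psi_2=\lnot(x=x)$ the hypothesis holds, yet in a one-element model all strategies are trivially uniform, the semantic game becomes a finite perfect-information game, Zermelo's theorem gives determinacy, and so no formula can have a truth gap on the singleton team --- hence the required $\theta$ cannot exist there, and the theorem must be read with the restriction (standard in this literature) to models with at least two elements. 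None of this, however, fills the missing construction, which is where all the mathematical content of the result lies.
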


\begin{theorem}[Dependence Logic and $\Sigma_1^1$ - sentences]
\label{theo:DLsent}
For any sentence $\phi$ of Dependence Logic there exists a existential second order sentence $\Phi$ such that $M \models^+ \phi$ if and only if $M \models \Phi$. Conversely, any sentence $\Phi$ of Existential Second Order Logic is equivalent to some Dependence Logic sentence $\phi$.
\end{theorem}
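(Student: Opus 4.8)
The plan is to establish the two inclusions separately, translating Dependence Logic into $\Sigma_1^1$ by a direct inductive transcription of team satisfaction, and translating $\Sigma_1^1$ back into Dependence Logic through the Enderton--Walkoe characterisation of $\Sigma_1^1$ by branching quantifiers (\cite{enderton70,walkoe70}), which lets the dependence atoms do the work.

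For the forward direction I would argue by simultaneous induction on the subformulas $\psi$ of $\phi$. To each $\psi$ with $\free(\psi) = \{v_1, \ldots, v_n\}$ I associate a fresh $n$-ary relation variable $S$, meant to encode a team $X$ over $\free(\psi)$ by the stipulation $s \in X \Leftrightarrow (s(v_1), \ldots, s(v_n)) \in S$, and I build two formulas $\tau^+_\psi(S)$ and $\tau^-_\psi(S)$ satisfying $(M,X) \models^+ \psi \Leftrightarrow M \models \tau^+_\psi(X)$ and $(M,X) \models^- \psi \Leftrightarrow M \models \tau^-_\psi(X)$. Each clause is a verbatim reading of the corresponding team-semantic condition from the compositionality theorem: the literal and dependence-atom conditions are first order in $S$; the negation clause merely exchanges $\tau^+$ and $\tau^-$; the positive disjunction clause becomes $\exists S_1 \exists S_2$ asserting the split $S = S_1 \cup S_2$ together with the two recursive translations; and the positive existential clause becomes an existential quantification over the relation encoding $X[F/v]$, with a first-order side condition guaranteeing that $F$ takes nonempty values. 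The one thing to check is that this stays within $\Sigma_1^1$: the crucial observation is that \emph{every} second-order quantifier produced by these clauses, on both polarities and including the derived conjunction and universal cases, is \emph{existential}, and that the remaining operations applied to the recursively obtained formulas are conjunction, disjunction, substitution of a first-order-definable relation for $S$, and first-order quantification --- all of which preserve the class $\Sigma_1^1$ (the closure under first-order universal quantification being the familiar trick of absorbing the quantified element into an extra coordinate of the existentially quantified relations). The negation clause, which only swaps $\tau^+$ with $\tau^-$, therefore causes no trouble, and instantiating the construction at the sentence $\phi$ with the team $\{\emptyset\}$ --- so that $S$ is a nullary relation, i.e. a truth value --- yields the required sentence $\Phi := \tau^+_\phi$.

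For the converse I would use that, by Enderton and Walkoe, every $\Sigma_1^1$ sentence is equivalent to a branching-quantifier sentence, i.e. a partially ordered quantifier prefix of the shape displayed in (\ref{eq:genquant}) applied to a first-order matrix $\theta$. It then suffices to translate such a prefix into Dependence Logic, and here the dependence atoms are tailor-made for the job: I would collect all the universal quantifiers of the prefix into a single block $\forall \tuple u$ at the front, follow it by all the existential quantifiers $\exists y_1 \cdots \exists y_k$, and guard each $\exists y_j$ by a dependence atom $=\!\!(\tuple u_j, y_j)$ in which $\tuple u_j$ lists \emph{exactly} those universally quantified variables on which $y_j$ is permitted to depend according to the branching structure, producing the sentence $\forall \tuple u \exists y_1 \cdots \exists y_k (\bigwedge_{j} =\!\!(\tuple u_j, y_j) \wedge \theta)$. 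By the team-semantic rule for the universal quantifier the leading block generates the team of all assignments to $\tuple u$, and over this team each atom $=\!\!(\tuple u_j, y_j)$ forces the chosen value of $y_j$ to be a genuine function of $\tuple u_j$ alone; these are precisely the Skolem functions witnessing the branching prefix, so the sentence is positively true exactly when the original branching sentence holds. (Any first-order quantifiers still occurring in $\theta$ translate unchanged, since over the relevant team Dependence Logic agrees with First Order Logic on first-order formulas.)

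The principal subtlety is confined to the two translations in opposite ways. In the forward direction it lies in the bookkeeping needed to certify that the alternation of the two polarities through negation never manufactures a universal second-order quantifier, so that the final formula is genuinely $\Sigma_1^1$ rather than merely second order; this is where the analysis must be done with care, even though each individual clause is routine. In the backward direction essentially all of the mathematical content is imported from Enderton--Walkoe, and the only point requiring justification is that the universal quantifiers of a branching prefix may be amalgamated into one front block without disturbing the dependence pattern --- which is legitimate precisely because, after the translation, it is the dependence atoms rather than the relative scope of the quantifiers that records which existential variable may depend on which universal one.
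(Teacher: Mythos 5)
Your proof is correct and takes essentially the same route as the source: the paper does not actually prove this theorem but imports it from \cite{vaananen07}, where the forward direction is exactly your inductive $\Sigma_1^1$ transcription of the team-semantic clauses (the formula-level version appears here as Theorem \ref{theo:DLtoSigma11}) and the converse goes through Skolem/branching-quantifier normal form as in Enderton--Walkoe. Your simultaneous $(\tau^+_\psi,\tau^-_\psi)$ induction is the right adaptation to this paper's primitive role-swapping negation (Väänänen works in negation normal form, so this bookkeeping is hidden there), and your check that every clause, on both polarities, introduces only existential second-order quantifiers is precisely the point that makes it work.
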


What about open formulas of Dependence Logic? The following theorem shows that their satisfaction conditions correspond to $\Sigma_1^1$-expressible, downwards monotonic conditions over relations:
\begin{theorem}
\label{theo:DLtoSigma11}
For all formulas $\psi$ of Dependence Logic and all sets of variables\footnote{We assume that the ordering of the variables of $V$ is fixed in advance.} $V = \{v_1 \ldots v_n\}$ with $\free(\psi) \subseteq V$ there exists a $\Sigma_1^1$ sentence $\Phi(R)$, where $R$ is a new $|V|$-ary relation symbol, such that 
\[
	(M, X) \models^+ \psi \Leftrightarrow M \models \Phi(X(V))
\]
for all models $M$ and teams $X$ and for $X(V) = \{s(v_1 \ldots v_n) : s \in X\}$.

Furthermore, $R$ occurs only negatively in $\Phi(R)$. 
\end{theorem}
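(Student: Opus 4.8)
The plan is to argue by structural induction on $\psi$, strengthening the statement so that the induction closes. I will show that for every $\psi$ and every $V = \{v_1,\ldots,v_n\} \supseteq \free(\psi)$ there are \emph{two} $\Sigma_1^1$ sentences $\Phi^+_\psi(R)$ and $\Phi^-_\psi(R)$, in each of which the fresh $n$-ary symbol $R$ occurs only negatively, with $(M,X) \models^+ \psi \Leftrightarrow M \models \Phi^+_\psi(X(V))$ and $(M,X) \models^- \psi \Leftrightarrow M \models \Phi^-_\psi(X(V))$ for all $M$ and all teams $X$; the theorem is then the positive half, taking $\Phi = \Phi^+_\psi$. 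Handling both polarities at once is forced by negation, since TS-neg$^+$ and TS-neg$^-$ define $(M,X)\models^+\lnot\psi$ as $(M,X)\models^-\psi$ and vice versa: I simply set $\Phi^+_{\lnot\psi} = \Phi^-_\psi$ and $\Phi^-_{\lnot\psi} = \Phi^+_\psi$, so that negation is free. The organizing observation is that demanding $R$ only negatively renders each $\Phi^\pm_\psi$ antitone in $R$, which is precisely the relational shadow of Downwards Closure (Theorem \ref{theo:dc}); this is what makes the construction possible and is invoked at the crucial step below.

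All base cases are first order, hence $\Sigma_1^1$, with $R$ appearing in a single negative (antecedent) position. For a relational atom $R'\tuple t$ I take $\Phi^+(R) = \forall\tuple v\,(R(\tuple v) \to R'\tuple t)$ and $\Phi^-(R) = \forall\tuple v\,(R(\tuple v) \to \lnot R'\tuple t)$, evaluating the terms of $\tuple t$ at the bound tuple $\tuple v$ (legitimate since $\free(R'\tuple t)\subseteq V$), and identities $t_1 = t_2$ are the same. For a dependence atom, TS-dep$^+$ is the first order statement $\forall\tuple v\forall\tuple v'\,((R(\tuple v)\wedge R(\tuple v')\wedge \tuple t_1\langle\tuple v\rangle = \tuple t_1\langle\tuple v'\rangle) \to \tuple t_2\langle\tuple v\rangle = \tuple t_2\langle\tuple v'\rangle)$ over two disjoint copies $\tuple v,\tuple v'$ of the variables, and TS-dep$^-$ is just $\forall\tuple v\,\lnot R(\tuple v)$, the condition $X = \emptyset$. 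In each of these $R$ occurs only in antecedents, hence only negatively.

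The two interesting clauses are positive disjunction and positive existential, where the genuine second order quantifier appears. For $\psi\vee\theta$, TS-$\vee^+$ asks for a splitting $X = X_1\cup X_2$ with $(M,X_1)\models^+\psi$ and $(M,X_2)\models^+\theta$. Encoding this by ``$R = Y\cup Z$'' would make $R$ positive, so instead I quantify a fresh $n$-ary $S$ and feed the \emph{partition} into the inductive formulas: $\Phi^+_{\psi\vee\theta}(R) = \exists S\,[\,\Phi^+_\psi(R\wedge S)\wedge\Phi^+_\theta(R\wedge\lnot S)\,]$, where $\Phi^+_\psi(R\wedge S)$ abbreviates $\Phi^+_\psi$ with every occurrence of its relation atom $R(\tuple v)$ replaced by $R(\tuple v)\wedge S(\tuple v)$, and dually for $R\wedge\lnot S$. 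Because $R$ occurred only negatively in $\Phi^+_\psi$ and $\Phi^+_\theta$, substituting into those negative positions a formula in which $R$ is positive leaves $R$ only negative in the result; and the result is $\Sigma_1^1$, since $\Sigma_1^1$ is preserved under substituting first order formulas for relation symbols, under conjunction, and under $\exists S$. The existential $\exists v\psi$ uses an $(n+1)$-ary auxiliary $S$: $\Phi^+_{\exists v\psi}(R) = \exists S\,[\,\forall\tuple v\,(R(\tuple v)\to\exists u\,S(\tuple v,u))\wedge\Phi^+_\psi(S\wedge R)\,]$, where in $\Phi^+_\psi$ (now over $V\cup\{v\}$) the $(n+1)$-ary relation atom is replaced by $S(\tuple v,u)\wedge R(\tuple v)$. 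The first conjunct forces each $s\in X$ to receive at least one extension, so that the restriction of $S$ to extensions of $X$ projects exactly onto $X$, matching the nonemptiness of $F(s)$ in TS-$\exists^+$; again $R$ stays negative and the formula is $\Sigma_1^1$. The negative connective clauses are routine: TS-$\vee^-$ gives $\Phi^-_{\psi\vee\theta} = \Phi^-_\psi\wedge\Phi^-_\theta$, and TS-$\exists^-$ concerns the full supplement $X[M/v]$, whose relation is $X(V)\times\dom(M)$, so $\Phi^-_{\exists v\psi}(R)$ is $\Phi^-_\psi$ with its relation atom replaced by $R(\tuple v)$ (dropping the new coordinate $u$); both keep $R$ negative.

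The main obstacle is keeping $R$ only negative in the positive disjunction clause. The set-theoretic content there is naturally an equation, $X = X_1\cup X_2$, and equations force $R$ to appear positively; my remedy replaces the union by the \emph{disjoint} cover $X(V) = (X(V)\cap S)\sqcup(X(V)\setminus S)$, which is expressible with $R$ confined to antecedents. Soundness of this move is exactly where Downwards Closure is used: given any (possibly overlapping) witnessing union $X = X_1\cup X_2$, I set $S = X_1(V)$, so that $X(V)\cap S = X_1(V)$ satisfies $\psi$ on the nose while $X(V)\setminus S\subseteq X_2(V)$ satisfies $\theta$ by Theorem \ref{theo:dc}; conversely any disjoint cover is a fortiori a union. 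Checking this equivalence, together with the analogous exactness argument for the existential clause, is the one genuinely non-mechanical step; the remaining inductive bookkeeping — that each constructed formula is $\Sigma_1^1$ and that $R$ never escapes a negative position — is routine.
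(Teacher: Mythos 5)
Your proof is correct and is essentially the standard argument: the paper itself states this theorem without proof (it is imported from V\"a\"an\"anen's book, as the opening remark of the subsection indicates), and the proof there is the same two-sided structural induction --- carrying $\Phi^+$ and $\Phi^-$ together so that negation is a mere swap, encoding splits and supplements by existentially quantified predicates kept in antecedent position, and using Downwards Closure (Theorem \ref{theo:dc}) to make the disjoint-cover encoding exact. The only detail to tidy is the tacit assumption $v \notin V$ in the two existential clauses, which is harmless because bound variables can be renamed apart.
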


The converse of this result - arguably one of the most important and useful results about Dependence Logic - was proved in \cite{kontinenv09}: 
\begin{theorem}
\label{theo:Sigma11toDL}
Let $\Phi(R)$ be a $\Sigma_1^1$ sentence in which $R$ occurs only negatively and has arity $n$, and let $V = \{v_1 \ldots v_n\}$ be a set of variables. Then there exists a Dependence Logic formula $\psi$ with free variables in $V$ such that \[
	(M, X) \models^+ \psi \Leftrightarrow M \models \Phi(X(V))
\]
for all models $M$ and teams $X$.
\end{theorem}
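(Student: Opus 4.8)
The plan is to invert the translation underlying Theorem~\ref{theo:DLtoSigma11}: there we saw that the positive satisfaction conditions of Dependence Logic formulas are precisely the $\Sigma_1^1$ properties of the team-relation $X(V)$ in which the relation symbol $R$ occurs only negatively, and here we must recover a defining formula for any such property. First I would bring $\Phi(R)$ into Skolem normal form, writing it as $\exists f_1 \ldots \exists f_k \forall x_1 \ldots \forall x_m\, \theta$, where $\theta$ is quantifier-free, the $f_j$ are function variables (the relation quantifiers being replaced by function quantifiers in the usual way), and $R$ still occurs only negatively in $\theta$. The guiding idea is that the team itself will play the role of $R$: every assignment $s \in X$ contributes the tuple $s(v_1 \ldots v_n)$ to $X(V)$, so that a literal $\lnot R(\tuple t)$ should be read as the assertion that the value of $\tuple t$ lies outside $X(V)$.

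The quantifier prefix and the $R$-free part of $\theta$ I would handle exactly as in the proof that $\Sigma_1^1$ is contained in Dependence Logic (Theorem~\ref{theo:DLsent}). Each universal first-order quantifier $\forall x_i$ becomes a Dependence Logic universal quantifier, which by TS-$\forall^+$ expands the team by all values of $x_i$; each existential function quantifier $\exists f_j$ becomes an existential quantifier $\exists u_j$ guarded by a dependence atom $=\!\!(\tuple x_j, u_j)$, where $\tuple x_j$ is the tuple of arguments of $f_j$, so that by TS-dep$^+$ the witness $u_j$ is forced to be a genuine function of those arguments. First-order literals of $\theta$ not involving $R$ are downward closed and so, by the first-order fragment behaving as in TS-FO$^+$, are evaluated pointwise over the (expanded) team; the Boolean structure of $\theta$ is rendered using $\vee$ for team-splitting together with $\wedge$, relying on downward closure (Theorem~\ref{theo:dc}) and locality (Theorem~\ref{theo:local}) to keep the bookkeeping consistent.

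The heart of the argument, and the step I expect to be the main obstacle, is the treatment of the negative literals $\lnot R(\tuple t)$. The difficulty is that a subformula evaluated on a subteam only has access to the assignments of that subteam, whereas non-membership of $\tuple t\langle s\rangle$ in $X(V)$ is a condition relating $s$ to \emph{all} assignments of the team at once; the naive rendering $\bigvee_i (t_i \ne v_i)$ only compares $\tuple t$ with $\tuple v$ within a single row and therefore expresses $\tuple t\langle s\rangle \ne s(v_1 \ldots v_n)$ rather than the intended $\tuple t\langle s\rangle \notin X(V)$. To bridge this gap I would isolate $\lnot R(\tuple t)$ behind a splitting disjunction that routes to one subteam exactly those assignments whose $\tuple t$-value must avoid the relation, and then express the required cross-row disjointness by an auxiliary gadget that re-exposes the full relation $X(V)$ inside that subteam --- introducing a fresh block of quantified variables together with dependence atoms forcing them to range over the $V$-values carried by the team, so that the per-row disequality test is in effect performed against every tuple of $X(V)$. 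Verifying that this gadget captures $\tuple t\langle s\rangle \notin X(V)$, and that it composes correctly with the splitting for the surrounding disjunctions, is the technically delicate point.

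Finally I would assemble these pieces into a single Dependence Logic formula $\psi$ whose free variables are exactly $V = \{v_1 \ldots v_n\}$, and prove $(M,X)\models^+ \psi \Leftrightarrow M \models \Phi(X(V))$ by induction on the structure of the normal form. The forward direction reads a uniform winning strategy for $\eloise$ --- equivalently, the splitting functions and dependence-respecting witnesses supplied by TS-$\exists^+$ and TS-$\vee^+$ --- as Skolem functions witnessing $\Phi(X(V))$, while the backward direction turns Skolem functions for $\Phi(X(V))$ into the corresponding choices of witnesses and team-splits, using the negative-atom gadget to certify each $\lnot R$ literal. Downward closure and locality guarantee that the construction is insensitive to the auxiliary coordinates introduced along the way, completing the equivalence.
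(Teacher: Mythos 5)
First, a point of reference: the paper does not prove this theorem at all --- it quotes it from Kontinen and V\"a\"an\"anen \cite{kontinenv09} --- so your proposal has to be measured against that proof. Its opening moves are indeed yours: Skolemize $\Phi$, turn first-order universal quantifiers into team-semantic ones, turn each Skolem function into an existentially quantified variable guarded by a dependence atom, and render $\lnot R(\tuple t)$ by disequalities against $\tuple v$; and you have correctly located the crux, namely the cross-row character of ``$\tuple t\langle s\rangle \notin X(V)$''. The gap is that your resolution of the crux cannot work. Your gadget is evaluated \emph{inside} a subteam produced by a splitting disjunction, and by locality (Theorem \ref{theo:local}) its satisfaction depends only on that subteam: fresh quantified variables range over $\dom(M)$, not over $X(V)$, and no Dependence Logic formula can force a variable to ``range over the $V$-values carried by the team'' --- that condition is not downwards closed, so by Theorem \ref{theo:dc} it is not expressible (it is what an \emph{inclusion} atom would do, and those are unavailable here). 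Hence the most your gadget can express is disjointness of the $\tuple t$-values from the $\tuple v$-values \emph{of the current subteam}, and that is unsound. Concretely, take $\Phi(R) = \lnot R(c_1) \vee \lnot R(c_2)$ for distinct constants, and $X$ with $X(v) = \{c_1, c_2\}$, so that $M \not\models \Phi(X(v))$: $\eloise$ splits $X$ into the rows with $v = c_2$ and the rows with $v = c_1$; on the first subteam ``$c_1$ avoids every $v$-value present here'' holds, on the second the same holds for $c_2$, so the translated disjunction is satisfied. By locality each of these subteams is indistinguishable from an original team on which the corresponding literal genuinely holds, so no strengthening of the local gadget can reject this split; similarly, $\Phi(R) = \forall y_1 \forall y_2 (y_1 = y_2 \vee \lnot R y_1 \vee \lnot R y_2)$ (``$R$ has at most one element'') defeats the row-wise rendering you start from.

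What is missing is the device that makes bad splits impossible, rather than an attempt to repair their effects afterwards, and this is where the proof in \cite{kontinenv09} earns its keep. One first normalizes $\Phi$ so that every negative occurrence of $R$ is applied to universally quantified variables and occurs \emph{alone} in its disjunct, guarded by $R$-free (hence flat) formulas; disjunctive choices between several $R$-literals are resolved by \emph{additional} existentially quantified Skolem functions of the universal variables, which select the literal that is to be responsible --- a normalization step absent from your plan. In the translation these selector witnesses are bound by dependence atoms, so they are constant on each block of rows sharing the values of the universal variables, and the flat guards then force any winning split to keep every such block intact. Since each block's $\tuple v$-projection is the whole of $X(V)$ (this is what the initial universal quantifiers buy, for nonempty $X$), the row-wise disequality $\tuple z \neq \tuple v$ evaluated on a union of intact blocks really does express $\tuple z\langle s\rangle \notin X(V)$, and only then does your two-directional reading of strategies as Skolem functions go through. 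A minor further caveat, inherited from the paper's own statement: the equivalence can only hold for $X \neq \emptyset$, since $(M, \emptyset) \models^+ \psi$ for every Dependence Logic formula $\psi$, while $\Phi(\emptyset)$ may fail.
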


Another one of the most interesting properties of Dependence Logic, whose proof can be found in \S 6.4 of \cite{vaananen07}, is that this logic can define its own truth operator! More precisely, what can be proved is that 
\begin{theorem}
\label{theo:DL_TP}
For any finite signature $\Sigma$ containing the binary operations $+$ and $\times$ there exists a Dependence Logic formula $\tau(x)$ with signature $\Sigma$ such that for all models $M_{\omega}$ of Peano's Axioms with signature $\Sigma$ and for all Dependence Logic sentences $\phi$, 
\[
	M_{\omega} \models^+ \phi \mbox{ if and only if } M_{\omega} \models^+ \tau(\ulcorner \phi\urcorner)
\]
where $\ulcorner \phi\urcorner$ represents a G\"odel numbering for $\phi$.
\end{theorem}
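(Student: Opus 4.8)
The plan is to reduce the claim to the existence of a universal $\Sigma_1^1$ truth predicate over models of arithmetic, exploiting the correspondence between Dependence Logic and $\Sigma_1^1$ recorded in Theorems~\ref{theo:DLsent} and~\ref{theo:Sigma11toDL}. First I would fix a G\"odel numbering $\phi \mapsto \ulcorner\phi\urcorner$ of Dependence Logic sentences; since $\Sigma$ contains $+$ and $\times$, sequence coding (via a $\beta$-function) is available in every $M_\omega$, so all the usual syntactic operations and the subformula relation are represented by arithmetic formulas. Next I would invoke Theorem~\ref{theo:DLsent}: every Dependence Logic sentence $\phi$ is equivalent, with respect to $\models^+$, to an Existential Second Order sentence $\Phi$. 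The translation furnished by that theorem is a purely syntactic Skolemization procedure, hence recursive, and I would record this effectiveness explicitly, so that the map $\ulcorner\phi\urcorner \mapsto \ulcorner\Phi\urcorner$ is itself represented by an arithmetic formula. It therefore suffices to define a truth predicate for $\Sigma_1^1$ sentences and compose it with this representable map.

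The heart of the argument is the construction of a single $\Sigma_1^1$ formula $\mathrm{True}(x)$, with one free first-order variable, such that $M_\omega \models \mathrm{True}(\ulcorner\Phi\urcorner)$ iff $M_\omega \models \Phi$ for every $\Sigma_1^1$ sentence $\Phi$. Writing $\Phi = \exists \tuple R\, \psi(\tuple R)$ with $\psi$ first order, I would express its truth as: there exist relations $\tuple R$ \emph{and} a relation $S$ coding a Tarski satisfaction class for the subformulas of the object formula coded by $x$, such that $S$ obeys the compositional Tarski conditions relative to the parameters $\tuple R$ and the arithmetic interpretation, and $S$ declares the top formula satisfied under the empty assignment. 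All the second-order quantifiers here are existential, while the closure conditions on $S$ — though containing first-order quantifiers over codes of subformulas and assignments — form a first-order matrix over the signature augmented by $\tuple R$ and $S$; hence the whole predicate is $\Sigma_1^1$. The delicate point is correctness in \emph{nonstandard} $M_\omega$: I do not need a full satisfaction class on all (possibly nonstandard) coded formulas, only that for a \emph{standard} input $\ulcorner\Phi\urcorner$ the biconditional holds. For such an input the object formula has standard finite length, so a correct partial satisfaction relation on its finitely many subformulas always exists (giving one direction), and conversely any $S$ meeting the Tarski conditions must agree with genuine satisfaction on those standard subformulas by induction on their standard structure (giving the other). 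This restriction to standard inputs is exactly what sidesteps the known non-existence pathologies of full satisfaction classes, and I expect verifying it to be the main obstacle.

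Finally I would assemble $\tau$. Composing $\mathrm{True}$ with the representable translation $\ulcorner\phi\urcorner \mapsto \ulcorner\Phi\urcorner$ yields a $\Sigma_1^1$ formula $\mathrm{True}^{\ast}(x)$ with $M_\omega \models \mathrm{True}^{\ast}(\ulcorner\phi\urcorner) \Leftrightarrow M_\omega \models^+ \phi$. To turn this into a Dependence Logic formula I would set $\Phi^{\ast}(R) := \forall y\,(\lnot R(y) \vee \mathrm{True}^{\ast}(y))$, a sentence in which the fresh unary symbol $R$ occurs only negatively and which is again $\Sigma_1^1$ (using closure of $\Sigma_1^1$ under first-order universal quantification, absorbing $y$ into the second-order witness), and then apply Theorem~\ref{theo:Sigma11toDL} with $V = \{x\}$ to obtain a Dependence Logic formula $\tau(x)$ with $(M_\omega, X) \models^+ \tau \Leftrightarrow M_\omega \models \Phi^{\ast}(X(\{x\}))$ for all teams $X$. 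Plugging the closed numeral $\ulcorner\phi\urcorner$ for $x$ yields a sentence whose $\models^+$-value, by the substitution lemma and locality (Theorem~\ref{theo:local}), equals that of $\tau$ on the singleton team $\{(x := \ulcorner\phi\urcorner)\}$; for that team $X(\{x\})$ is the singleton $\{\ulcorner\phi\urcorner\}$, on which $\Phi^{\ast}$ collapses to $\mathrm{True}^{\ast}(\ulcorner\phi\urcorner)$. Hence $M_\omega \models^+ \tau(\ulcorner\phi\urcorner)$ iff $M_\omega \models^+ \phi$, as required. I would close by remarking that this does not contradict Tarski's undefinability theorem precisely because $\Sigma_1^1$, and hence Dependence Logic under $\models^+$, is not closed under contradictory negation, so no liar-style diagonalization is available.
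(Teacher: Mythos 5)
The paper never proves Theorem \ref{theo:DL_TP} itself: it imports it from \S 6.4 of \cite{vaananen07}. Your argument is a correct reconstruction of essentially that standard proof: an effective translation of Dependence Logic into $\Sigma_1^1$ (Theorem \ref{theo:DLsent}, with effectiveness made explicit so the map on G\"odel codes is representable), a single $\Sigma_1^1$ truth definition for $\Sigma_1^1$ sentences that is correct on \emph{standard} inputs in every (possibly nonstandard) model of Peano's Axioms, and a translation back into Dependence Logic. Your handling of the nonstandard-model issue is exactly the right point: a partial satisfaction class on the finitely many (standard) subformulas of a standard input always exists, and any relation obeying the Tarski closure conditions must agree with genuine satisfaction on those subformulas by external induction, which is what sidesteps the pathologies of full satisfaction classes. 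Two remarks. First, where the proof in \cite{vaananen07} converts the $\Sigma_1^1$ truth formula back into a Dependence Logic formula by treating the free variable as a parameter in the sentence-level translation, you instead route through Theorem \ref{theo:Sigma11toDL} via the wrapper $\forall y(\lnot R(y) \vee \mathrm{True}^{\ast}(y))$, in which $R$ indeed occurs only negatively; this is a legitimate and arguably cleaner alternative, and it is available within this paper's stated toolkit, whereas the original proof predates that theorem. Second, one detail you must repair: as written, ``there exist relations $\tuple R$ and a relation $S$'' cannot be a single formula $\mathrm{True}(x)$, since the number and the arities of the second-order quantifiers of the coded sentence vary with the input $x$, while a fixed $\Sigma_1^1$ formula has a fixed second-order prefix. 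The standard fix, fully compatible with the $\beta$-function apparatus you already invoke, is to quantify over one fixed relation $W$ coding the whole tuple (interpreting the $i$-th second-order atom $R_i\tuple a$ as $W(i, \langle \tuple a\rangle)$, with the satisfaction class $S$ evaluating such atoms by look-up in $W$). With that adjustment made explicit, your argument goes through.
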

The fact that the law of the excluded middle fails for Dependence Logic makes this formalism quite safe from the Liar Paradox. It is certainly possible to find a sentence $\lambda$ of Dependence Logic such that 
\[
	M_\omega \models^+ \tau(\ulcorner \lambda\urcorner) \Leftrightarrow M_\omega \models^+ \lambda \Leftrightarrow M_\omega \models^+ \lnot \tau(\ulcorner \lambda\urcorner) \Leftrightarrow  M_\omega \models^- \tau(\ulcorner \lambda \urcorner)
\]
however, this expression $\lambda$ will simply not be true in any model of Peano's Axioms, and therefore neither will $\tau(\ulcorner \lambda \urcorner)$ or its negation.

What about validity? 

Due to the equivalence between Dependence Logic and Existential Second Order Logic, the validity problem for Dependence Logic is highly undecidable:
\begin{theorem}
\label{theo:DL_val}
The Validity Problem of Dependence Logic is $\Pi_2$-complete with respect to the Levy Hierarchy.\footnote{In brief, this is the class of the set-theoretic statements of the form $\forall x \exists y \phi(x, y)$, where $\phi$ only contains bounded quantifiers. See \cite{levy65} for details.}
\end{theorem}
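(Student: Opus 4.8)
The plan is to transfer the question to Existential Second Order Logic and then read off the set-theoretic complexity directly from the shape of the resulting quantifier prefix. By Theorem \ref{theo:DLsent}, the map sending a Dependence Logic sentence $\phi$ to an equivalent $\Sigma_1^1$ sentence $\Phi_\phi = \exists \bar R\, \theta_\phi$ (with $\theta_\phi$ first order) is given by an effective syntactic procedure, essentially the Skolemization that turns dependence atoms into second order function quantifiers; hence both $\phi \mapsto \Phi_\phi$ and the extraction of the first order matrix $\theta_\phi$ are $\Delta_1$-definable functions of the G\"odel codes. Since $M \models^+ \phi$ iff $M \models \Phi_\phi$, a sentence $\phi$ is valid iff
\[
	\forall M\; \exists \bar R\; \big(\bar R \text{ are relations on } M \ \wedge\ \mathrm{Sat}_{FO}((M,\bar R),\theta_\phi)\big),
\]
where $\mathrm{Sat}_{FO}$ is the usual first order satisfaction relation.

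For the upper bound I would recall that $\mathrm{Sat}_{FO}$ is $\Delta_1$ in $\mathrm{ZF}$ and that ``$\bar R$ are relations on $M$'' is $\Delta_0$, so the matrix inside the brackets is $\Delta_1$. Using the $\Sigma_1$ form of this matrix, prefixing the second order quantifier $\exists \bar R$ (which is $\exists R\,(R \subseteq M^{k} \wedge \dots)$, hence a genuine unbounded quantifier) keeps the formula $\Sigma_1$, and the outer $\forall M$ then yields a $\Pi_2$ formula. Thus the set of valid sentences lies in $\Pi_2$.

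The main obstacle is the lower bound, i.e. $\Pi_2$-hardness. Here I would pass to complements and instead prove that $\Pi_1^1$-satisfiability, $\{\Psi : \exists M\; M \models \Psi\}$, is $\Sigma_2$-hard; this is equivalent, since $\Phi = \lnot \Psi$ is $\Sigma_1^1$ and is valid exactly when $\Psi$ is unsatisfiable. The key tool is Levy's reflection characterization: every $\Sigma_2$ statement is equivalent in $\mathrm{ZF}$ to one of the form ``there exists a transitive witness (a level $V_\alpha$, or an $H_\kappa$) satisfying a fixed first order $\sigma$.'' Given such a $\Sigma_2$ statement I would effectively build a $\Pi_1^1$ sentence $\Psi$ in the language $\{\in\}$ that conjoins (i) a first order fragment of set theory together with $\sigma$, and (ii) the assertion that $\in$ is \emph{well founded}, written as the $\Pi_1^1$ formula
\[
	\forall R\; \big(\exists u\, R(u) \ \to\ \exists u\,(R(u) \wedge \forall w\,(R(w) \to \lnot\, w \in u))\big).
\]
Because we are working in full second order semantics, the quantifier $\forall R$ ranges over all subsets of the domain, so (ii) genuinely expresses well foundedness; by the Mostowski collapse any model of $\Psi$ is then isomorphic to a transitive set modelling the first order part, and conversely any reflecting witness supplies a model. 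Hence $\Psi$ is satisfiable iff the required transitive witness exists iff the original $\Sigma_2$ statement holds.

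The delicate points I expect to have to check are that the construction $\sigma \mapsto \Psi$ is $\Delta_0$- (or at worst $\Delta_1$-) definable so that it really is a reduction in the Levy hierarchy, and that the first order fragment in (i) is strong enough that the transitive models captured by $\Psi$ coincide with the witnesses delivered by Levy reflection, including the absoluteness of well foundedness between $V$ and the collapsed structure. Granting these, I would use the effective converse direction of Theorem \ref{theo:DLsent} to pull $\Phi = \lnot\Psi$ back to a Dependence Logic sentence, so that the reduction lands inside the validity problem proper; negating throughout then shows every $\Pi_2$ predicate reduces to validity, which together with the upper bound gives $\Pi_2$-completeness.
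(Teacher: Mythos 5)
First, a remark on the comparison itself: the paper does not actually prove this theorem. It appears in the subsection of recalled results and is quoted from \cite{vaananen07}, so your proposal has to be judged on its own merits. Your upper bound is correct and unproblematic: transferring validity through Theorem \ref{theo:DLsent}, noting that first-order satisfaction is $\Delta_1$ and that the existential second-order quantifier (bounded by the $\Delta_0$ condition $R \subseteq M^k$) keeps the matrix $\Sigma_1$, the outer $\forall M$ indeed yields $\Pi_2$.

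The hardness half, however, has a genuine gap, and it sits precisely at the point you flag as ``delicate.'' Your sentence $\Psi$ conjoins only (i) a first-order theory plus $\sigma$ and (ii) the $\Pi_1^1$ well-foundedness axiom. Since (ii) is the \emph{only} second-order conjunct, satisfiability of $\Psi$ reduces, via Mostowski collapse, exactly to the existence of a \emph{transitive} model of the first-order part --- and ``there exists a transitive $M$ with $M \models \theta$'' is a $\Sigma_1$ statement of set theory (transitivity is $\Delta_0$, satisfaction is $\Delta_1$, and $M$ is quantified existentially). So, no matter which first-order fragment you choose, composing your ($\Delta_1$) map $\sigma \mapsto \Psi$ with this $\Sigma_1$ satisfiability predicate would render every $\Sigma_2$ predicate $\Sigma_1$, collapsing the Levy hierarchy --- which is provably proper (this is in the very reference \cite{levy65}). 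In particular, your hope that the first-order fragment can be made ``strong enough that the transitive models captured by $\Psi$ coincide with the witnesses delivered by Levy reflection'' is unrealizable in principle: no first-order theory has exactly the $H_\kappa$'s (or the $V_\alpha$'s) as its transitive models. The missing idea is a \emph{second} genuinely $\Pi_1^1$ conjunct expressing supertransitivity (power-set completeness), namely $\forall S\,\forall x\,\bigl(\forall y\,(S(y) \to y \in x) \to \exists z\,\forall y\,(y \in z \leftrightarrow S(y))\bigr)$: in full semantics this forces the collapsed transitive model to contain every true subset of each of its elements, and only together with this condition (plus suitable first-order axioms) is the witness pinned down to an $H_\kappa$/$V_\alpha$-type structure to which Levy's characterization of $\Sigma_2$ applies. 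With that conjunct added, the rest of your outline --- pulling $\lnot \Psi$ back into Dependence Logic via the effective converse direction of Theorem \ref{theo:DLsent} and negating throughout --- does go through.
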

Many important set-theoretic statements belong in this class, and therefore correspond to Dependence Logic sentences: for example, one can find a Dependence Logic sentence which is valid if and only if there are no inaccessible cardinals (see Exercise 7.9 of \cite{vaananen07})!

As a consequence, there is no hope whatsoever of finding a semidecidable, sound and complete proof system for Dependence Logic. However, \cite{kontinenv11} describes a sound and complete Natural Deduction system for finding the \emph{first order} consequences of a Dependence Logic expression; and \cite{galliani13b} introduces a sound and complete axiom system for an extension of Dependence Logic, but with respect to a weaker semantics, analogous to Henkin's general semantics for Second Order Logic.

We barely scratched the surface of the current body of knowledge about Dependence Logic here, of course. But these few results are perhaps sufficient to point out some of the characteristics of this formalism, and will provide an useful basis of comparison for the paraconsistent version of it that we will develop in the next section.
\section{Paraconsistent Non-dependence Logic}
\label{sect:PDL}
In this section, we will introduce a novel, simple, paraconsistent variant of Dependence Logic. As we will see, many of the known results about Dependence Logic can be transferred to this new formalism with no difficulty whatsoever; and furthermore, as we will see, Paraconsistent Non-dependence Logic will be able to express not only its truth predicate, but also its \emph{validity} predicate!
\subsection{Paraconsistent Team Semantics}
As we saw, the Game Theoretic Semantics of Dependence Logic is a variant of that of First Order Logic in which a a player can only employ strategies which respect certain \emph{uniformity conditions}, specified in terms of dependence atoms. Then, as in the first order case, one characterizes the truth or falsity of Dependence Logic sentences in terms of the existence of uniform winning strategies for the existential or the universal players. 

However, one might wonder if this is condition is not too strict. If the semantic games represent debates between agents about the truth of a sentence, and if, as we saw, there exist some possible debates in which neither agent can guarantee a victory, perhaps one might want to be more permissive and say that a sentence is true if one's opponent cannot reliably succeed in \emph{falsifying} it? This idea would correspond to the following truth definition:
\begin{definition}
For all models $M$, teams $X$ and formulas $\psi$, 
\begin{itemize}
\item $(M, X) \pmodels^+ \psi$ if and only if $\abelard$ has \emph{no} winning strategy in $G^M_X(\psi)$; 
\item $(M, X) \pmodels^- \psi$ if and only if $\eloise$ has \emph{no} winning strategy in $G^M_X(\psi)$.
\end{itemize}
In particular, given a sentence $\phi$, we say that $\phi$ is \emph{paraconsistently true} in $M$, and we write $M \pmodels^+ \phi$, if and only if $(M, \{\emptyset\}) \pmodels^+ \phi$; and we say that it is \emph{paraconsistently false}, and we write $M \pmodels^- \phi$, if and only if $(M, \{\emptyset\}) \pmodels^- \phi$. 
\end{definition}
If our semantic games were determined, as the ones of First Order Logic are, this definition would be obviously equivalent to the usual one: indeed, if one of the players of a determined two-player game has no winning strategy, it follows at once that the other does. However, this is not the case for our games. For example, consider again the sentence $\phi = \forall x \exists y (=\!\!(\emptyset, y) \wedge x = y)$. As we saw, if $M$ has at least two elements then neither player has a winning strategy $G^M(\phi)$; and hence, we have at once that $M \pmodels^+ \phi$ and $M \pmodels^- \phi$. In other words, our $\phi$ is a simple example of a sentence which is \emph{both} true and false! 

More in general, it is easy to see that for all $M$, $X$ and $\psi$, 
\[
	(M, X) \pmodels^\pm \psi \Leftrightarrow (M, X) \not \models^\mp \psi.
\]

This implies at once that the resulting formalism, which we will call ``Paraconsistent Non-dependence Logic'' is contained in \emph{Team Logic}, that is, in the extension of Dependence Logic which adds a contradictory negation to it \cite{vaananen07b}. Furthermore, this property makes it trivial to find the Team Semantics rules for this logic
\begin{theorem}
\label{theo:parateam}
For all models $M$, teams $X$, relation symbols $R$ in its signature, tuples of terms $\tuple t$ and terms $t_1$, $t_2$ in its signature, variables $v$ and sets of variables $V$, 
\begin{description}
\item[PTS-atom$^+$] $(M, X) \pmodels^+ R \tuple t$ if and only if there exists a $s \in X$ such that $M \models_s R \tuple t$ in the usual first order sense; 
\item[PTS-eq$^+$] $(M, X) \pmodels^+ t_1 = t_2$ if and only if there exists a $s \in X$ such that $M \models_s t_1 = t_2$ in the usual first order sense;
\item[PTS-dep$^+$] $(M, X) \pmodels^+ =\!\!(\tuple t_1, \tuple t_2)$ if and only if $X \not = \emptyset$;
\item[PTS-neg$^+$] $(M, X) \pmodels^+ \lnot \psi$ if and only if $(M, X) \pmodels^- \psi$;
\item[PTS-$\vee^+$] $(M, X) \pmodels^+ \psi \vee \theta$ if and only if $(M, X) \pmodels^+ \psi$ or $(M, X) \pmodels^+ \theta$; 
\item[PTS-$\exists^+$] $(M ,X) \pmodels^+ \exists v \psi$ if and only if $(M, X[M/x]) \pmodels^+ \psi$.
\end{description}
and
\begin{description}
\item[PTS-atom$^-$] $(M, X) \pmodels^- R \tuple t$ if and only if there exists a $s \in X$ such that $M \not \models_s R \tuple t$ in the usual first order sense; 
\item[PTS-eq$^-$] $(M, X) \pmodels^- t_1 = t_2$ if and only if there exists a $s \in X$ such that $M \not \models_s t_1 = t_2$ in the usual first order sense;
\item[PTS-dep$^-$] $(M, X) \pmodels^- =\!\!(\tuple t_1, \tuple t_2)$ if and only if there exist two $s, s' \in X$ which assign the same values to $\tuple t_1$ but not to $\tuple t_2$;
\item[PTS-neg$^-$] $(M, X) \pmodels^- \lnot \psi$ if and only if $(M, X) \pmodels^+ \psi$;
\item[PTS-$\vee^-$] $(M, X) \pmodels^- \psi \vee \theta$ if and only if for all $Y$ and $Z$ such that $X = Y \cup Z$, $(M, Y) \pmodels^- \psi$ or $(M, Z) \pmodels^- \theta$;
\item[PTS-$\exists^-$] $(M, X) \pmodels^- \exists v \psi$ if and only if for all $F: X \rightarrow \parts(\dom(M))\backslash \{\emptyset\}$, $(M, X[F/v]) \pmodels^- \psi$.
\end{description}
\end{theorem}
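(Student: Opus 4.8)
The plan is to reduce everything to the already-established Team Semantics rules for Dependence Logic by exploiting the duality between the two semantics. The first step is to record explicitly the identity noted just above the statement: for every model $M$, team $X$ and formula $\psi$,
\[
	(M, X) \pmodels^\pm \psi \Leftrightarrow (M, X) \not\models^\mp \psi.
\]
This is immediate from the definitions. By definition $(M, X) \pmodels^+ \psi$ holds exactly when $\abelard$ has no uniform winning strategy in $G^M_X(\psi)$, and $(M, X) \models^- \psi$ holds exactly when $\abelard$ does have such a strategy; these two statements are negations of one another, which gives the $+$ case. The $-$ case is identical with the roles of $\eloise$ and $\abelard$ interchanged.

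With this in hand, each of the twelve clauses is obtained mechanically. To prove a PTS$^+$ rule one rewrites $(M, X) \pmodels^+ \psi$ as $(M, X) \not\models^- \psi$, substitutes the matching TS$^-$ clause from the compositionality theorem, and pushes the negation through using De Morgan and the standard dualities for quantifiers; the PTS$^-$ rules are handled the same way starting from $(M, X) \pmodels^- \psi \Leftrightarrow (M, X) \not\models^+ \psi$ and the TS$^+$ clauses. For instance, PTS-atom$^+$ follows because TS-atom$^-$ says $(M, X) \models^- R\tuple t$ iff $M \not\models_s R\tuple t$ for all $s \in X$, whose negation is precisely the existence of some $s \in X$ with $M \models_s R\tuple t$; PTS-dep$^+$ follows because the negation of the condition $X = \emptyset$ of TS-dep$^-$ is $X \not= \emptyset$; and PTS-neg$^+$ follows since the negation of $(M, X) \models^+ \psi$, which is the content of TS-neg$^-$, is by definition $(M, X) \pmodels^- \psi$.

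The only clauses requiring a moment's attention are those in which the Dependence Logic rule already contains a quantifier over subteams or over choice functions, since there the negation flips an existential into a universal. For TS-$\vee^-$ the condition ``$(M, X) \models^- \psi$ and $(M, X) \models^- \theta$'' negates to ``$(M, X) \not\models^- \psi$ or $(M, X) \not\models^- \theta$'', i.e.\ the disjunction of PTS-$\vee^+$; dually, negating the existential clause ``$X = X_1 \cup X_2$ with $(M, X_1) \models^+ \psi$ and $(M, X_2) \models^+ \theta$'' of TS-$\vee^+$ yields the universal statement PTS-$\vee^-$ quantified over all splits $X = Y \cup Z$. Likewise the existential over functions in TS-$\exists^+$ negates to the universal over all $F : X \rightarrow \parts(\dom(M)) \bs \{\emptyset\}$ in PTS-$\exists^-$, while the single-team condition of TS-$\exists^-$ negates directly to PTS-$\exists^+$.

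No genuine obstacle arises: once the duality is fixed, the whole theorem is a routine exercise in negating the clauses of the compositionality theorem, and one only has to keep track of which quantifiers change shape. As a sanity check, the boundary case $X = \emptyset$ is consistent throughout: since $(M, \emptyset) \models^\pm \psi$ always holds by Proposition \ref{propo:emptyset}, the duality forces $(M, \emptyset) \pmodels^\pm \psi$ to fail, in agreement with, for example, the clause $X \not= \emptyset$ of PTS-dep$^+$.
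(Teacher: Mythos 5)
Your proof is correct and takes essentially the same route as the paper: the paper offers no separate argument for this theorem beyond stating the duality $(M, X) \pmodels^\pm \psi \Leftrightarrow (M, X) \not\models^\mp \psi$ just before it and remarking that this makes the compositional rules ``trivial'' to obtain, which is exactly the clause-by-clause negation of the Team Semantics rules (with the quantifier flips for $\vee$ and $\exists$) that you carry out explicitly. Your only silent emendation --- reading the paper's definition of $\pmodels$ as quantifying over \emph{uniform} winning strategies, although the word ``uniform'' is omitted there --- is clearly the intended one, since the duality itself would fail otherwise.
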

Defining conjunction and universal quantification in the usual way, we have that 
\begin{description}
\item[PTS-$\wedge^+$] $(M, X) \pmodels^+ \psi \wedge \theta$ if and only if whenever $X = Y \cup Z$, $(M, Y) \pmodels^+ \psi$ or $(M, Z) \pmodels^+\theta$; 
\item[PTS-$\wedge^-$] $(M, X) \pmodels^- \psi \wedge \theta$ if and only if $(M, X) \pmodels^- \psi$ or $(M, X) \pmodels^- \theta$;
\item[PTS-$\forall^+$] $(M, X) \pmodels^+ \forall v \psi$ if and only if for all $F: X \rightarrow \dom(M)$, $(M, X[F/v]) \pmodels^+ \psi$; 
\item[PTS-$\forall^-$] $(M, X) \pmodels^- \forall v \psi$ if and only if $(M, X[M/v]) \pmodels^- \psi$. 
\end{description}
The rules that this semantics assigns to $=\!\!(\tuple t_1, \tuple t_2)$, however, do not model the notion of functional dependency in the same sense in which those of standard team-semantics do. Rather, they correspond to the notion of \emph{non-dependency}: $(M, X) \pmodels^- =\!\!(\tuple  t_1, \tuple t_2)$ if and only if $\tuple t_2$ is \emph{not} functionally dependent on $\tuple t_1$ in $X$.

Thus, we will define $\not = \!\!(\tuple t_1, \tuple t_2)$ as the formula $\lnot =\!\!(\tuple t_1, \tuple t_2)$ and we will think of it as a \emph{non-dependence atom}. Obviously, the satisfaction conditions this atom are obtained by switching the $+$ and $-$ signs in those of the dependence atom: in other words, they will hold positively in a team $X$ if and only if $\tuple t_2$ is \emph{not} a function of $\tuple t_1$ in $X$, and it will hold negatively in it only if $X \not = \emptyset$. 
\subsection{Properties}
Most of the properties of Dependence Logic are in natural correspondence with properties of Paraconsistent Non-dependence Logic. To begin with, it is clear that locality (in the sense of Theorem \ref{theo:local}) holds for Paraconsistent Non-dependence Logic too; and furthermore, 
\begin{theorem}
If $\psi$ is first order, 
\begin{description}
\item[PTS-FO$^+$] $(M, X) \pmodels^+ \psi$ if and only if there is a $s \in X$ such that $M \models_s \psi$ in the usual first order sense; 
\item[PTS-FO$^-$] $(M, X) \pmodels^- \psi$ if and only if there is a $s \in X$ such that $M \not \models_s \psi$ in the usual first order sense.
\end{description}
\end{theorem}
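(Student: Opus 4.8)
The plan is to obtain both equivalences directly, with no induction on $\psi$, by combining two facts that are already available. The first is the duality $(M, X) \pmodels^\pm \psi \Leftrightarrow (M, X) \not\models^\mp \psi$ observed just before Theorem \ref{theo:parateam}. The second is the characterisation of first-order formulas in standard Team Semantics given by the clauses TS-FO$^+$ and TS-FO$^-$. Since the right-hand sides we are aiming for (``there exists an $s \in X$ with $\ldots$'') are exactly the classical negations of the right-hand sides of TS-FO$^-$ and TS-FO$^+$ (``for all $s \in X$, $\ldots$''), the entire argument reduces to a single de Morgan step applied after the duality.

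Concretely, for PTS-FO$^+$ I would argue that $(M, X) \pmodels^+ \psi$ holds iff $(M, X) \not\models^- \psi$, by duality; by TS-FO$^-$ this is the failure of the statement that $M \not\models_s \psi$ for every $s \in X$; and the negation of that universal statement is precisely that there is some $s \in X$ with $M \models_s \psi$, which is the claimed condition. The clause PTS-FO$^-$ is entirely symmetric: $(M, X) \pmodels^- \psi$ holds iff $(M, X) \not\models^+ \psi$ by duality, which by TS-FO$^+$ is the failure of ``$M \models_s \psi$ for all $s \in X$'', equivalently the existence of some $s \in X$ with $M \not\models_s \psi$.

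There is no genuine obstacle here; the only step deserving a word is the de Morgan passage from the negation of a universally quantified first-order condition over the assignments in $X$ to the corresponding existential one, which is justified at the classical meta-level at which we reason about the games. As a sanity check, the empty team is handled uniformly: by Proposition \ref{propo:emptyset} we have $(M, \emptyset) \models^- \psi$ and $(M, \emptyset) \models^+ \psi$, so duality yields $(M, \emptyset) \not\pmodels^+ \psi$ and $(M, \emptyset) \not\pmodels^- \psi$, in agreement with both existential right-hand sides failing vacuously when $X = \emptyset$.
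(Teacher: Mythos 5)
Your proof is correct and follows exactly the route the paper intends: the paper states this theorem without proof, as an immediate consequence of the duality $(M,X) \pmodels^\pm \psi \Leftrightarrow (M,X) \not\models^\mp \psi$ together with the known clauses TS-FO$^+$ and TS-FO$^-$ for standard Team Semantics, which is precisely your argument. The de Morgan step and the empty-team sanity check are both handled correctly.
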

As a consequence, we have that if $\phi$ is a first order sentence, $M \models^+ \phi$ in Team Semantics if and only if $M \pmodels^+ \phi$ in Paraconsistent Team Semantics if and only if $M \models \phi$ in Tarski's semantics. Hence, like Dependence Logic proper, Paraconsistent Non-dependence Logic is a \emph{conservative} extension of First Order Logic.

As we saw, in Dependence Logic there exist no sentence $\phi$ and model $M$ such that $M \models^+ \phi \wedge \lnot \phi$; but there exist models $M$ and Dependence Logic sentences $\phi$ such that $M \not \models^+ \lnot (\phi \wedge \lnot \phi)$. On the other hand, there certainly exist sentences $\phi$ and models $M$ such that $M \pmodels^+ \phi \wedge \lnot \phi$; and furthermore, 
\begin{proposition}
For all models $M$, formulas $\psi$ in the signature of $M$ and nonempty teams $X$ with $\free(\psi) \subseteq \dom(X)$, 
\[
	(M, X) \pmodels^+ \lnot (\psi \wedge \lnot \psi)
\]
\end{proposition}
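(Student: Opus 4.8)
The plan is to reduce the claim to the ordinary Law of Non-Contradiction (Theorem \ref{theo:lnc}) by exploiting the duality $(M,X)\pmodels^\pm\chi \Leftrightarrow (M,X)\not\models^\mp\chi$ recorded just before Theorem \ref{theo:parateam}. The guiding observation is that this proposition is precisely the mirror image of the situation for ordinary Dependence Logic discussed after Theorem \ref{theo:lnc}: there the \emph{external} law of non-contradiction holds while the \emph{internal} formula $\lnot(\phi\wedge\lnot\phi)$ may fail, whereas here the external law fails (there are sentences that are both true and false) but the internal formula becomes valid on nonempty teams.

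First I would unfold the left-hand side using the paraconsistent rules. By \textbf{PTS-neg}$^+$, $(M,X)\pmodels^+\lnot(\psi\wedge\lnot\psi)$ holds if and only if $(M,X)\pmodels^-(\psi\wedge\lnot\psi)$. Applying the derived rule \textbf{PTS-}$\wedge^-$, this is equivalent to $(M,X)\pmodels^-\psi$ or $(M,X)\pmodels^-\lnot\psi$; and by \textbf{PTS-neg}$^-$ the second disjunct is just $(M,X)\pmodels^+\psi$. Hence the goal reduces to establishing, for every nonempty $X$, that
\[
	(M,X)\pmodels^-\psi \quad\text{or}\quad (M,X)\pmodels^+\psi .
\]

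Second, I would pass back to ordinary team semantics through the duality. Since $(M,X)\pmodels^-\psi \Leftrightarrow (M,X)\not\models^+\psi$ and $(M,X)\pmodels^+\psi \Leftrightarrow (M,X)\not\models^-\psi$, the displayed disjunction is logically the negation of the conjunction ``$(M,X)\models^+\psi$ and $(M,X)\models^-\psi$''. Thus it suffices to show that $(M,X)\models^+\psi$ and $(M,X)\models^-\psi$ cannot hold simultaneously when $X\neq\emptyset$ --- but this is exactly Theorem \ref{theo:lnc}, which forces $X=\emptyset$ whenever both hold. As $X$ is assumed nonempty, at least one of the two standard satisfaction relations must fail, which completes the reduction.

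I expect no genuine obstacle here: the argument is a mechanical unfolding followed by a single appeal to Theorem \ref{theo:lnc}. The only points requiring care are the bookkeeping of the sign swaps through \textbf{PTS-neg}$^\pm$ and the use of the derived rule \textbf{PTS-}$\wedge^-$ directly (rather than re-expanding $\wedge$ into its $\lnot,\vee$ definition), together with noting that the nonemptiness hypothesis on $X$ is precisely what is needed to discharge the $X=\emptyset$ conclusion of the Law of Non-Contradiction.
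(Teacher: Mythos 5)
Your proposal is correct and follows essentially the same route as the paper's own proof: unfold $\lnot(\psi\wedge\lnot\psi)$ via the paraconsistent negation and conjunction rules to the disjunction $(M,X)\pmodels^-\psi$ or $(M,X)\pmodels^+\psi$, then convert its failure via the duality $\pmodels^\pm\ \Leftrightarrow\ \not\models^\mp$ into ``$(M,X)\models^+\psi$ and $(M,X)\models^-\psi$'' and refute that by Theorem \ref{theo:lnc} together with $X\neq\emptyset$. If anything, you are slightly more explicit than the paper in flagging that the nonemptiness hypothesis is exactly what discharges the $X=\emptyset$ conclusion of the Law of Non-Contradiction.
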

\begin{proof}
Clearly, $(M, X) \pmodels^+ \lnot ( \psi \wedge \lnot \psi)$ if and only if $(M, X) \pmodels^- \psi \wedge \lnot \psi$, that is, if and only if $(M, X) \pmodels^- \psi$ or $(M, X) \pmodels^+ \psi$. Suppose that neither of these is the case: then we have that $(M, X) \models^+ \psi$ and $(M, X) \models^- \psi$, which is impossible because of Theorem \ref{theo:lnc}.
\end{proof}
This is indeed a peculiar situation. Dependence Logic, which admits truth gaps, does not satisfy either the Law of Excluded Middle $\phi \vee \lnot \phi$ nor the Law of Non-Contradiction $\lnot (\phi \wedge \lnot \phi)$; and Paraconsistent Non-dependence Logic, which admits truth gluts, satisfies \emph{both}! The key for understanding this, of course, is to notice that the semantic games for Paraconsistent Non-dependence Logic formulas are not determined and, hence, our semantics for the negation in terms of player swapping - while being completely natural from a game-theoretic perspective - does not correspond to the contradictory negation: in general, asserting $\lnot \phi$ in Dependence Logic or in Paraconsistent Non-dependence Logic is not at all the same as rejecting $\phi$, although it reduces to it in the case of first order expressions.

Just as Dependence Logic satisfies the Downwards Closure property of Theorem \ref{theo:dc}, Paraconsistent Non-dependence Logic satisfies the following \emph{Upwards Closure Property:} 
\begin{theorem}
\label{theo:uc}
For all models $M$, teams $X, Y$ over it with $X \subseteq Y$, formulas $\psi$ over the signature of $M$ with $\free(\psi) \subseteq \dom(X)$ and $d \in \{+, -\}$, 
\[
	(M, X) \pmodels^d \psi \Rightarrow (M, Y) \pmodels^d \psi
\]
\end{theorem}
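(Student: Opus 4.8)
The plan is to deduce the Upwards Closure Property for $\pmodels$ directly from the Downwards Closure Property for the ordinary team semantics (Theorem \ref{theo:dc}), exploiting the duality $(M, X) \pmodels^\pm \psi \Leftrightarrow (M, X) \not\models^\mp \psi$ established just before Theorem \ref{theo:parateam}. Since $\pmodels$ is, up to swapping the truth value, the complement of $\models$, one expects upwards monotonicity of $\pmodels$ to be exactly the mirror image of the downwards monotonicity of $\models$, so that no separate induction on the structure of $\psi$ is needed.

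Concretely, fix $d \in \{+, -\}$, write $\bar d$ for the opposite sign, and assume both $X \subseteq Y$ and $(M, X) \pmodels^d \psi$. First I would reduce the goal $(M, Y) \pmodels^d \psi$, via the duality, to showing $(M, Y) \not\models^{\bar d} \psi$. Arguing by contradiction, suppose instead that $(M, Y) \models^{\bar d} \psi$. Because $X \subseteq Y$, Theorem \ref{theo:dc} yields $(M, X) \models^{\bar d} \psi$. Applying the duality once more, in the form $(M, X) \not\pmodels^d \psi \Leftrightarrow (M, X) \models^{\bar d} \psi$, this says precisely $(M, X) \not\pmodels^d \psi$, contradicting the hypothesis. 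Hence $(M, Y) \not\models^{\bar d} \psi$, i.e. $(M, Y) \pmodels^d \psi$, as required.

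The domain hypotheses cause no trouble: since $X \subseteq Y$, every assignment of $X$ lies in $Y$, so the two teams share a domain containing $\free(\psi)$, and both $\models$ and $\pmodels$ are well-defined on them. The only edge case worth a glance is $X = \emptyset$. By Proposition \ref{propo:emptyset} we have $(M, \emptyset) \models^- \psi$, so by duality $(M, \emptyset) \not\pmodels^+ \psi$, and symmetrically $(M, \emptyset) \models^+ \psi$ gives $(M, \emptyset) \not\pmodels^- \psi$. Thus the antecedent $(M, \emptyset) \pmodels^d \psi$ is simply false and the implication holds vacuously, in agreement with the general argument above.

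Since the whole proof rests on a single application of Theorem \ref{theo:dc} sandwiched between two uses of the duality, there is no real obstacle to surmount here. The one thing to be careful about is keeping the two signs $d$ and $\bar d$ straight when passing back and forth between $\pmodels$ and $\models$, since each crossing flips the sign; getting this bookkeeping wrong is the only way the argument could go astray.
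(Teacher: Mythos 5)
Your proof is correct, and the sign bookkeeping is handled properly: each crossing of the duality $(M,X) \pmodels^{\pm} \psi \Leftrightarrow (M,X) \not\models^{\mp} \psi$ flips the sign exactly once, and your contradiction argument uses Theorem \ref{theo:dc} with the sign $\bar d$ consistently throughout. However, your route is not the one the paper actually displays. The paper remarks that the theorem ``can be proved very easily by means of Theorem \ref{theo:dc}'' --- which is precisely your argument --- but then deliberately presents a third proof, working directly with the game-theoretic definition of $\pmodels$: if Player $\alpha \in \{\abelard, \eloise\}$ has no uniform winning strategy in $G^M_X(\psi)$, then $\alpha$ has none in $G^M_Y(\psi)$ either, because any uniform strategy winning from every initial assignment in $Y$ would in particular win from every initial assignment in $X \subseteq Y$. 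The two arguments are near mirror images: the restriction-of-strategies observation in the paper's proof is exactly the game-theoretic content of the downwards closure theorem you invoke. What your version buys is economy --- a purely formal deduction from two cited facts, with no need to mention games at all; what the paper's version buys is self-containedness and insight, since it exhibits the mechanism (strategies restrict to subteams) rather than routing through the compositional semantics, and it fits the paper's theme of justifying properties of $\pmodels$ directly from the ``absence of winning strategies'' definition. Your treatment of the empty-team edge case via Proposition \ref{propo:emptyset} is sound but unnecessary, since the general argument already covers it (as you yourself note).
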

\begin{proof}
This can be proved very easily by means of Theorem \ref{theo:dc}, or, alternatively, by induction on $\psi$. However, we will show yet another proof, which uses the game-theoretic definition of truth and falsity in Paraconsistent Non-dependence Logic. Suppose that Player $\alpha \in \{\abelard, \eloise\}$ has no uniform winning strategy in $G^M_X(\psi)$. Then, since $X \subseteq Y$ he or she also has no uniform winning strategy in $G^M_Y(\psi)$: indeed, if a uniform strategy won this game for $\psi$ starting from all initial assignments in $Y$, he or she could also win starting from all assignments in $X \subseteq Y$. 
\end{proof}
Similarly, we have that 
\begin{proposition}
For all models $M$, formulas $\psi$, and $d \in \{+,-\}$, $(M, \emptyset) \not \pmodels^d \psi$.
\end{proposition}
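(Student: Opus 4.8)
The plan is to derive this immediately from the corresponding fact about standard Team Semantics, namely Proposition~\ref{propo:emptyset}, together with the duality $(M, X) \pmodels^\pm \psi \Leftrightarrow (M, X) \not\models^\mp \psi$ recorded just before Theorem~\ref{theo:parateam}. Because all the needed machinery is already in place, the argument is essentially a one-line calculation rather than a genuine structural induction, so no work is saved by reproving anything inductively.

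First I would unfold the positive case. By the duality, $(M, \emptyset) \pmodels^+ \psi$ holds if and only if $(M, \emptyset) \not\models^- \psi$. But Proposition~\ref{propo:emptyset} asserts that $(M, \emptyset) \models^- \psi$ for every $\psi$, so the right-hand side is false; hence $(M, \emptyset) \not\pmodels^+ \psi$. The negative case is entirely symmetric: $(M, \emptyset) \pmodels^- \psi$ holds if and only if $(M, \emptyset) \not\models^+ \psi$, which again contradicts the other half of Proposition~\ref{propo:emptyset}, namely $(M, \emptyset) \models^+ \psi$. This disposes of both $d = +$ and $d = -$ at once.

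Alternatively, I would give a direct game-theoretic argument, matching the style used elsewhere in this section. The game $G^M_\emptyset(\psi)$ has its initial position drawn from $\{(\psi, s, \eloise) : s \in \emptyset\} = \emptyset$, so there are no plays at all; consequently every strategy for either player is \emph{vacuously} winning. In particular both $\abelard$ and $\eloise$ possess (uniform) winning strategies, so by definition $(M, \emptyset) \pmodels^+ \psi$ fails, because $\abelard$ \emph{does} have a winning strategy, and $(M, \emptyset) \pmodels^- \psi$ fails, because $\eloise$ does. This is precisely the mirror image of the reasoning behind Proposition~\ref{propo:emptyset}, where those same vacuous winning strategies make the empty team satisfy every formula both positively and negatively in the standard semantics.

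There is no real obstacle here; the only point requiring a moment's care is the convention that a strategy counts as winning exactly when it wins \emph{every} play starting from \emph{every} admissible initial position, so that an empty set of initial positions renders the winning condition vacuously true for both players simultaneously. Once that convention is fixed, the result is immediate, and it is worth emphasising that it neatly complements the slogan of this subsection: whereas in Dependence Logic the empty team satisfies everything, in Paraconsistent Non-dependence Logic it satisfies nothing.
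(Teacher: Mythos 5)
Your first paragraph is exactly the paper's own proof: it combines the duality $(M,X) \pmodels^{\pm} \psi \Leftrightarrow (M,X) \not\models^{\mp} \psi$ with Proposition~\ref{propo:emptyset}, handling $d=+$ and $d=-$ symmetrically, just as the paper does. Your alternative game-theoretic argument (both players have vacuously winning strategies over an empty set of initial positions, so neither side of $\pmodels$ can hold) is also sound and simply re-derives Proposition~\ref{propo:emptyset} in passing, so there is nothing to correct.
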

\begin{proof}
Let $+^* = -$ and $-^* = +$. Now, by Proposition \ref{propo:emptyset}, we know that $(M, \emptyset) \models^{d^*} \psi$; but $(M, \emptyset) \pmodels^d \psi$ if and only if $(M, \emptyset) \not \models^{d^*} \psi$, and hence it follows at once that $(M, \emptyset) \not \pmodels^d \psi$.
\end{proof}
Also, as easy consequences of Theorems \ref{theo:DLsent} and \ref{theo:DLtoSigma11}, we have that 
\begin{theorem}
\label{theo:PDLsent}
For any sentence $\phi$ of Paraconsistent Non-dependence Logic there exists an universal second order sentence $\Phi$ such that $M \pmodels^+ \phi$ if and only if $M \models \Phi$. Conversely, any universal second order sentence $\Phi$ is logically equivalent to some Paraconsistent Non-dependence Logic sentence.
\end{theorem}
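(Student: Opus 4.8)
The plan is to reduce both directions to the sentence-level correspondence between Dependence Logic and $\Sigma_1^1$ (Theorem \ref{theo:DLsent}), exploiting the duality $(M,X)\pmodels^{\pm}\psi \Leftrightarrow (M,X)\not\models^{\mp}\psi$ established immediately before the statement, together with the fact that Paraconsistent Non-dependence Logic shares its syntax with Dependence Logic, so that the negation $\lnot$ is available and acts by swapping the roles of the players under both semantics.

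For the first direction, I would start from $M\pmodels^+\phi$, that is $(M,\{\emptyset\})\pmodels^+\phi$, and rewrite it by duality as $(M,\{\emptyset\})\not\models^-\phi$. Since $(M,\{\emptyset\})\models^-\phi$ holds (by TS-neg$^+$) exactly when $(M,\{\emptyset\})\models^+\lnot\phi$, this amounts to saying that the Dependence Logic sentence $\lnot\phi$ is \emph{not} true in $M$. By Theorem \ref{theo:DLsent} there is an existential second order ($\Sigma_1^1$) sentence $\Psi$ with $M\models^+\lnot\phi \Leftrightarrow M\models\Psi$; hence $M\pmodels^+\phi \Leftrightarrow M\not\models\Psi \Leftrightarrow M\models\lnot\Psi$. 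As $\lnot\Psi$ is a universal second order ($\Pi_1^1$) sentence, setting $\Phi=\lnot\Psi$ completes this direction.

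For the converse, I would run the same computation backwards. Given a universal second order sentence $\Phi$, the sentence $\lnot\Phi$ is $\Sigma_1^1$, so by the second half of Theorem \ref{theo:DLsent} there is a Dependence Logic sentence $\psi$ with $M\models^+\psi \Leftrightarrow M\models\lnot\Phi$. Reading $\psi$ as a Paraconsistent Non-dependence Logic sentence (legitimate, since the two logics coincide syntactically), I would take $\lnot\psi$ and check that $M\pmodels^+\lnot\psi \Leftrightarrow M\models\Phi$. Indeed, $M\pmodels^+\lnot\psi$ unfolds via PTS-neg$^+$ to $(M,\{\emptyset\})\pmodels^-\psi$, which by duality is $(M,\{\emptyset\})\not\models^+\psi$, i.e. $M\not\models^+\psi$, i.e. $M\not\models\lnot\Phi$, i.e. $M\models\Phi$, as required.

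There is no genuine obstacle here beyond bookkeeping: the only point requiring care is keeping the two dualities correctly aligned — the semantic duality between $\pmodels$ and $\models$ on one side, and the syntactic duality between $\Sigma_1^1$ and $\Pi_1^1$ under negation on the other — so that a single application of Theorem \ref{theo:DLsent} to $\lnot\phi$ (respectively to $\lnot\Phi$) suffices in each direction. This alignment is precisely why the result is ``easy'': the passage from ``\eloise{} has a winning strategy'' to ``\abelard{} has \emph{no} winning strategy'' is exactly the passage from a $\Sigma_1^1$ condition to its $\Pi_1^1$ complement.
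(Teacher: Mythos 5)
Your proof is correct and follows essentially the paper's own route: the paper states this theorem without proof as an ``easy consequence'' of Theorem \ref{theo:DLsent}, and the intended argument is exactly your duality computation, i.e. passing through $M \pmodels^+ \phi \Leftrightarrow M \not\models^+ \lnot\phi$ and applying Theorem \ref{theo:DLsent} to $\lnot\phi$ (respectively to a $\Sigma_1^1$ equivalent of $\lnot\Phi$). Indeed, your converse direction mirrors, at the level of sentences, precisely the argument the paper spells out for the formula-level analogue, Theorem \ref{theo:Pi11toPDL}.
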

\begin{theorem}
\label{theo:PDLtoPi11}
For all formulas $\psi$ of Paraconsistent Non-dependence Logic and all sets of variables $V = \{v_1 \ldots v_n\}$ with $\free(\psi) \subseteq V$ there exists a $\Pi_1^1$ sentence $\Phi(R)$, where $R$ is a new $|V|$-ary relation symbol, such that 
\[
	(M, X) \pmodels^+ \psi \Leftrightarrow M \models \Phi(X(V))
\]
for all models $M$ and teams $X$. Furthermore, $R$ occurs only positively in $\Phi(R)$. 
\end{theorem}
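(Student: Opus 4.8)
The plan is to reduce the statement to Theorem \ref{theo:DLtoSigma11} by exploiting the fundamental duality between the Paraconsistent and the standard Team Semantics. Recall the identity already observed in the text, namely that $(M, X) \pmodels^+ \psi \Leftrightarrow (M, X) \not\models^- \psi$ for every formula $\psi$, the two logics sharing the very same syntax. The first step is to rewrite the negative condition on the right using the negation clause TS-neg$^+$: since $(M, X) \models^- \psi$ holds precisely when $(M, X) \models^+ \lnot \psi$, we obtain
\[
	(M, X) \pmodels^+ \psi \Leftrightarrow (M, X) \not\models^+ \lnot \psi .
\]
This brings the problem entirely back into the realm of ordinary positive Team Semantics, to which Theorem \ref{theo:DLtoSigma11} applies directly.

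Next I would apply Theorem \ref{theo:DLtoSigma11} to the Dependence Logic formula $\lnot \psi$, which is legitimate because the two formalisms share the same syntax and $\free(\lnot \psi) = \free(\psi) \subseteq V$. This yields a $\Sigma_1^1$ sentence $\Psi(R)$, in which $R$ occurs only negatively, such that $(M, X) \models^+ \lnot \psi \Leftrightarrow M \models \Psi(X(V))$ for all models $M$ and teams $X$. Setting $\Phi(R) := \lnot \Psi(R)$ and chaining the equivalences then gives
\[
	(M, X) \pmodels^+ \psi \Leftrightarrow M \not\models \Psi(X(V)) \Leftrightarrow M \models \Phi(X(V)),
\]
which is exactly the required characterisation.

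It remains to verify the two syntactic claims about $\Phi$. Since $\Psi$ is $\Sigma_1^1$, its negation $\Phi = \lnot \Psi$ is $\Pi_1^1$, giving the stated complexity bound. For the polarity condition, prefixing the outer negation reverses the polarity of every occurrence of $R$; as $R$ occurred only negatively in $\Psi(R)$, it occurs only positively in $\Phi(R)$. There is essentially no genuine obstacle here — all the mathematical content is carried by Theorem \ref{theo:DLtoSigma11}, and the statement itself is flagged as an easy consequence of it. The only point requiring care is the bookkeeping of the polarity of $R$ under the outer negation: one should confirm that the translation of $\lnot \psi$ supplied by that theorem really does keep $R$ purely negative before dualising, since it is precisely this feature that is being turned into the "only positively" clause of the present statement.
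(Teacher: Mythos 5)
Your proposal is correct and takes essentially the same route as the paper: the paper states this theorem as an easy consequence of Theorem \ref{theo:DLtoSigma11}, obtained exactly as you do by passing to $\lnot\psi$ via the duality $(M,X) \pmodels^+ \psi \Leftrightarrow (M,X) \not\models^+ \lnot\psi$ and then negating the resulting $\Sigma_1^1$ sentence, which is also the dualization pattern the paper spells out for the converse direction in Theorem \ref{theo:Pi11toPDL}. Your final bookkeeping point (that negating a sentence in which $R$ occurs only negatively yields one in which $R$ occurs only positively) is precisely what justifies the polarity clause.
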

Similarly, we have the following consequences of Theorems \ref{theo:DL_neg_nonsem} and  \ref{theo:Sigma11toDL}: 
\begin{theorem}
\label{theo:PDL_neg_nonsem}
Let $\psi_1$ and $\psi_2$ be two formulas such that $(M, X) \pmodels^+ \psi_1 \vee \psi_2$ for all models $M$ over the signature of $\psi_1$ and $\psi_2$ and for all nonempty teams $X$ with $\free(\psi_1) \cup \free(\psi_2) \subseteq \dom(X)$. Then there exists an expression $\theta$ such that 
\[
	(M, X) \pmodels^+ \theta \Leftrightarrow (M, X) \pmodels^+ \psi_1
\]
and 
\[
	(M, X) \pmodels^- \theta \Leftrightarrow (M, X) \pmodels^+ \psi_2
\]
for all models $M$ whose signature contains the ones of $\psi_1$ and $\psi_2$ and for all $X$ with domain containing $\free(\psi_1) \cup \free(\psi_2)$.
\end{theorem}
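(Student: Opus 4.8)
The plan is to derive this result from its Dependence Logic counterpart, Theorem \ref{theo:DL_neg_nonsem}, by exploiting the duality $(M, X) \pmodels^\pm \psi \Leftrightarrow (M, X) \not\models^\mp \psi$ that links Paraconsistent Non-dependence Logic to ordinary Team Semantics. Since the two logics share the same syntax, the very same expression $\theta$ produced by Theorem \ref{theo:DL_neg_nonsem} will serve here, read under the paraconsistent semantics; so the whole argument reduces to translating the hypothesis and the two target biconditionals across this duality, with no new construction required. (It is worth noting that the expressibility of $\theta$ in Theorem \ref{theo:DL_neg_nonsem} ultimately rests on Theorem \ref{theo:Sigma11toDL}, which is why both underlie the present statement; for the reduction I would treat Theorem \ref{theo:DL_neg_nonsem} as a black box.)

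First I would rewrite the two desired biconditionals in terms of $\models^\pm$. By the duality, $(M, X) \pmodels^+ \theta \Leftrightarrow (M, X) \pmodels^+ \psi_1$ becomes $(M, X) \models^- \theta \Leftrightarrow (M, X) \models^- \psi_1$, and $(M, X) \pmodels^- \theta \Leftrightarrow (M, X) \pmodels^+ \psi_2$ becomes $(M, X) \models^+ \theta \Leftrightarrow (M, X) \models^- \psi_2$. Recalling TS-neg$^+$, i.e. $(M, X) \models^+ \lnot\chi \Leftrightarrow (M, X) \models^- \chi$, these conditions say exactly that $(M, X) \models^+ \theta \Leftrightarrow (M, X) \models^+ \lnot\psi_2$ and $(M, X) \models^- \theta \Leftrightarrow (M, X) \models^+ \lnot\psi_1$. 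This is precisely the conclusion of Theorem \ref{theo:DL_neg_nonsem} for the pair $\lnot\psi_2, \lnot\psi_1$, so I would apply that theorem with $\lnot\psi_2$ and $\lnot\psi_1$ in place of its two input formulas to obtain $\theta$; the duality then transports the two $\models^\pm$-biconditionals back to the required $\pmodels^\pm$-biconditionals.

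It remains only to verify that our hypothesis supplies exactly the hypothesis demanded by Theorem \ref{theo:DL_neg_nonsem}, namely that $(M, X) \models^+ \lnot\psi_2 \wedge \lnot\psi_1$ holds only when $X = \emptyset$. By TS-$\wedge^+$ and TS-neg$^+$, this conjunction holds positively in $X$ iff $(M, X) \models^- \psi_1$ and $(M, X) \models^- \psi_2$, which by TS-$\vee^-$ is equivalent to $(M, X) \models^- \psi_1 \vee \psi_2$; and applying the duality once more, $(M, X) \models^- \psi_1 \vee \psi_2$ is the negation of $(M, X) \pmodels^+ \psi_1 \vee \psi_2$. Hence our assumption, that $(M, X) \pmodels^+ \psi_1 \vee \psi_2$ holds for every nonempty $X$, is literally the statement that $(M, X) \models^+ \lnot\psi_2 \wedge \lnot\psi_1$ forces $X = \emptyset$, and the reduction closes.

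I do not expect a genuine obstacle here; the one thing to watch is the sign bookkeeping, since each use of the duality flips $+$ and $-$ and each negation introduces a further flip, making it easy to transpose $\psi_1$ and $\psi_2$ by accident. Finally I would dispatch the degenerate case $X = \emptyset$ separately: since $(M, \emptyset) \not\pmodels^d \chi$ for every $\chi$ and every $d \in \{+, -\}$ by the Proposition preceding this theorem, both sides of each biconditional are false there, so the conclusion extends vacuously from nonempty teams to all teams of the required domain.
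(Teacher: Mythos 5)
Your proof is correct and follows essentially the same route as the paper's: both verify the hypothesis of Theorem \ref{theo:DL_neg_nonsem} for the pair $\lnot\psi_2, \lnot\psi_1$ (using that the given disjunction condition forces $X=\emptyset$ whenever both negations hold positively in Team Semantics), apply that theorem to obtain $\theta$, and transport the two biconditionals back through the duality $(M,X)\pmodels^{\pm}\psi \Leftrightarrow (M,X)\not\models^{\mp}\psi$. Your sign bookkeeping is accurate throughout; the only difference is your separate (harmless, but unnecessary) treatment of the empty team, which is already covered since the duality rewriting and the conclusion of Theorem \ref{theo:DL_neg_nonsem} hold for all teams of the required domain.
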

\begin{proof}
Let $\psi_1$ and $\psi_2$ be as in our hypothesis, and consider $\theta_1 = \lnot \psi_1$ and $\theta_2 = \lnot \psi_2$. Now, suppose that $(M, X) \models^+ \theta_1$ and $(M, X) \models^+ \theta_2$: then $(M, X) \not \pmodels^+ \psi_1$ and $(M, X) \not \pmodels^+ \psi_2$, and hence $(M, X) \not \pmodels^+ \psi_1 \vee \psi_2$, and hence $X = \emptyset$. Therefore, by Theorem \ref{theo:DL_neg_nonsem}, there exists a formula $\theta$ such that $(M, X) \models^+ \theta$ if and only if $(M, X) \models^+ \theta_2$ and $(M, X) \models^- \theta$ if and only if $(M, X) \models^+ \theta_1$. 

Then
\begin{align*}
	(M, X) \pmodels^+ \theta & \Leftrightarrow (M, X) \not \models^- \theta \Leftrightarrow (M, X) \not \models^+ \theta_1 \Leftrightarrow\\
	&\Leftrightarrow  (M, X) \pmodels^- \theta_1 \Leftrightarrow (M, X) \pmodels^+ \psi_1
\end{align*}
and 
\begin{align*}
	(M, X) \pmodels^- \theta & \Leftrightarrow (M, X) \not \models^+ \theta \Leftrightarrow (M, X) \not \models^+ \theta_2 \Leftrightarrow\\
	&\Leftrightarrow (M, X) \pmodels^- \theta_2 \Leftrightarrow (M, X) \pmodels^+ \psi_2
\end{align*}
as required.
\end{proof}

\begin{theorem}
\label{theo:Pi11toPDL}
Let $\Phi(R)$ be a $\Pi_1^1$ sentence in which $R$ occurs only positively and has arity $n$, and let $V = \{v_1 \ldots v_n\}$ be a set of variables. Then there exists a Paraconsistent Non-dependence Logic formula $\psi$ with free variables in $V$ such that \[
	(M, X) \pmodels^+ \psi \Leftrightarrow M \models \Phi(X(V))
\]
for all models $M$ and teams $X$.
\end{theorem}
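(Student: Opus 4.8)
The plan is to dualize the statement so that it becomes a direct instance of Theorem \ref{theo:Sigma11toDL}, in exactly the same spirit in which Theorem \ref{theo:PDL_neg_nonsem} was reduced to Theorem \ref{theo:DL_neg_nonsem}. The bridge is the identity $(M, X) \pmodels^\pm \chi \Leftrightarrow (M, X) \not\models^\mp \chi$ established earlier, which lets me trade a positive paraconsistent condition for the failure of a negative standard one, together with the negation clause for standard Team Semantics.

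First I would pass from $\Phi(R)$ to its contradictory negation $\lnot \Phi(R)$. Since $\Phi$ is $\Pi_1^1$, moving the negation inward past the block of universal second-order quantifiers turns it into a $\Sigma_1^1$ sentence; and since $R$ occurs only positively in $\Phi$, every occurrence of $R$ in $\lnot \Phi$ is negative. Thus $\lnot \Phi(R)$ satisfies precisely the hypotheses of Theorem \ref{theo:Sigma11toDL}, which yields a Dependence Logic formula $\theta$ with free variables in $V$ such that $(M, X) \models^+ \theta \Leftrightarrow M \models \lnot \Phi(X(V))$ for all models $M$ and teams $X$.

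I would then set $\psi := \lnot \theta$, which is a well-formed formula with $\free(\psi) = \free(\theta) \subseteq V$. Unwinding the semantics gives the chain
\[
	(M, X) \pmodels^+ \psi \Leftrightarrow (M, X) \not\models^- \lnot\theta \Leftrightarrow (M, X) \not\models^+ \theta \Leftrightarrow M \not\models \lnot\Phi(X(V)) \Leftrightarrow M \models \Phi(X(V)),
\]
where the first step is the duality identity applied to $\chi = \lnot\theta$, the second is the negation clause $(M, X) \models^- \lnot\theta \Leftrightarrow (M, X) \models^+ \theta$, the third is the defining property of $\theta$, and the last is elementary. This is exactly the desired equivalence.

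The computation is entirely routine; the only point demanding care is the bookkeeping in the first step, namely confirming that negating a $\Pi_1^1$ sentence in which $R$ is purely positive produces a $\Sigma_1^1$ sentence in which $R$ is purely negative, so that Theorem \ref{theo:Sigma11toDL} genuinely applies. Everything else is a mechanical application of the two semantic identities, so I expect no real obstacle beyond this polarity-and-quantifier-block check.
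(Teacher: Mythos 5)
Your proposal is correct and follows essentially the same route as the paper's own proof: dualize to $\lnot\Phi(R)$, which is $\Sigma_1^1$ with $R$ occurring only negatively, apply Theorem \ref{theo:Sigma11toDL} to obtain $\theta$, and take $\psi = \lnot\theta$, unwinding the same chain of equivalences via the identity $(M,X) \pmodels^\pm \chi \Leftrightarrow (M,X) \not\models^\mp \chi$ and the negation clause. No gaps; nothing further is needed.
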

\begin{proof}
Let $\Phi(R)$ be a $\Pi_1^1$ sentence as in our hypothesis. Then $\lnot \Phi(R)$ is equivalent to some $\Sigma_1^1$ sentence in which $R$ occurs only negatively; and therefore, by Theorem \ref{theo:Sigma11toDL}, there exists a Dependence Logic formula $\theta$ such that 
\[
(M, X) \models^+ \theta \Leftrightarrow M \models \lnot \Phi(X(V)).
\]
Now take $\psi = \lnot \theta$: then 
\begin{align*}
	(M, X) \pmodels^+ \psi &\Leftrightarrow (M, X) \not \models^- \lnot \theta \Leftrightarrow (M, X) \not \models^+ \theta \Leftrightarrow\\	
& \Leftrightarrow M \not \models \lnot \Phi(X(V)) \Leftrightarrow M \models \Phi(X(V))
\end{align*}
as required.
\end{proof}
So far so good: Paraconsistent Non-dependence Logic is so tightly coupled with Dependence Logic that importing even sophisticated  Dependence Logic results into it is quite unproblematic. 

Is Paraconsistent Non-dependence Logic capable of expressing its own truth predicate, like Dependence Logic is? As we will now see, the answer is positive, and furthermore this truth predicate is a simple variant of the truth predicate for Dependence Logic:
\begin{theorem}
For any finite signature $\Sigma$ containing the binary operations $+$ and $\times$, there exists a Paraconsistent Non-dependence Logic formula $\tau(x)$ with signature $\Sigma$ such that for all models $M_{\omega}$ of Peano's Axioms with signature $\Sigma$ and for all sentences $\phi$, 
\[
	M_{\omega} \pmodels^+ \phi \mbox{ if and only if } M_{\omega} \pmodels^+ \tau(\ulcorner \phi\urcorner)
\]
where $\ulcorner \phi\urcorner$ represents a G\"odel numbering for $\phi$.
\end{theorem}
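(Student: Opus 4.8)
The plan is to reduce the claim to the truth-predicate theorem for ordinary Dependence Logic (Theorem \ref{theo:DL_TP}), exploiting the basic duality $(M,X)\pmodels^{\pm}\psi \Leftrightarrow (M,X)\not\models^{\mp}\psi$ that links the two semantics. Write $\tau_0(x)$ for the Dependence Logic truth predicate provided by Theorem \ref{theo:DL_TP}, so that $M_{\omega}\models^{+}\chi \Leftrightarrow M_{\omega}\models^{+}\tau_0(\ulcorner\chi\urcorner)$ for every Dependence Logic sentence $\chi$ and every model $M_{\omega}$ of Peano's Axioms. Since Paraconsistent Non-dependence Logic shares its syntax with Dependence Logic, $\tau_0$ is also a formula of the former, and each sentence $\phi$ under consideration is simultaneously a Dependence Logic sentence to which Theorem \ref{theo:DL_TP} applies.

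First I would unfold paraconsistent truth at a sentence via the duality and the falsity/negation equivalence: $M_{\omega}\pmodels^{+}\phi$ iff $M_{\omega}\not\models^{-}\phi$ iff $M_{\omega}\not\models^{+}\lnot\phi$. Applying Theorem \ref{theo:DL_TP} to the Dependence Logic sentence $\lnot\phi$ then gives that $M_{\omega}\not\models^{+}\lnot\phi$ iff $M_{\omega}\not\models^{+}\tau_0(\ulcorner\lnot\phi\urcorner)$. Thus everything reduces to expressing, inside the logic and paraconsistently-positively, the condition ``$\tau_0$ fails positively on the code of $\lnot\phi$''.

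To do this I would compose $\tau_0$ with the map $\ulcorner\phi\urcorner\mapsto\ulcorner\lnot\phi\urcorner$. This map is primitive recursive and hence representable in Peano Arithmetic by a first order formula $\nu(x,y)$, so that in any model of the axioms the only $y$ with $\nu(\ulcorner\phi\urcorner,y)$ is $\ulcorner\lnot\phi\urcorner$. I would then set
\[
	\tau(x) \;:=\; \lnot\,\exists y\,\bigl(\nu(x,y)\wedge\tau_0(y)\bigr).
\]
Writing $\chi(x):=\exists y(\nu(x,y)\wedge\tau_0(y))$ and evaluating $\chi$ at the singleton team with $x:=\ulcorner\phi\urcorner$: since $\nu$ is first order and functional on this input, any witnessing function for the existential is forced to pick exactly the value $\ulcorner\lnot\phi\urcorner$, and by rule TS-$\wedge^{+}$ together with locality (Theorem \ref{theo:local}) one obtains $M_{\omega}\models^{+}\chi(\ulcorner\phi\urcorner)$ iff $M_{\omega}\models^{+}\tau_0(\ulcorner\lnot\phi\urcorner)$. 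Reading $\tau$ through rule PTS-neg$^{+}$ and the duality then yields
\[
	M_{\omega}\pmodels^{+}\tau(\ulcorner\phi\urcorner)
	\Leftrightarrow M_{\omega}\pmodels^{-}\chi(\ulcorner\phi\urcorner)
	\Leftrightarrow M_{\omega}\not\models^{+}\chi(\ulcorner\phi\urcorner)
	\Leftrightarrow M_{\omega}\not\models^{+}\tau_0(\ulcorner\lnot\phi\urcorner)
	\Leftrightarrow M_{\omega}\pmodels^{+}\phi,
\]
where the last equivalence collects Theorem \ref{theo:DL_TP} applied to $\lnot\phi$ and the duality chain of the second paragraph.

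The genuinely routine-but-delicate point — the one I would treat most carefully — is the equivalence $M_{\omega}\models^{+}\chi(\ulcorner\phi\urcorner)\Leftrightarrow M_{\omega}\models^{+}\tau_0(\ulcorner\lnot\phi\urcorner)$: verifying that composing the (possibly non-first-order) formula $\tau_0$ with the representable function through an existential quantifier behaves as intended under team semantics. The key observation is that $\tau$ is only ever fed a standard numeral $\ulcorner\phi\urcorner$, so representability of $\nu$ pins down a unique standard witness even in nonstandard models of the axioms, the extended team collapses to a singleton, and positive satisfaction of the conjunction reduces cleanly to positive satisfaction of $\tau_0$ at that witness. Everything else is bookkeeping with the duality $(M,X)\pmodels^{\pm}\psi\Leftrightarrow(M,X)\not\models^{\mp}\psi$ and with Theorem \ref{theo:DL_TP}.
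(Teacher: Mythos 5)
Your proof is correct and takes essentially the same route as the paper's: both reduce the claim to the Dependence Logic truth predicate of Theorem \ref{theo:DL_TP} applied to $\lnot\phi$, define $\tau$ as the game-theoretic negation of that predicate composed with the G\"odel coding of negation, and conclude via the duality $(M,X) \pmodels^{\pm} \psi \Leftrightarrow (M,X) \not\models^{\mp} \psi$. The only difference is an implementation detail: the paper takes a \emph{term} $\nneg(x)$ with $\nneg(\ulcorner \phi \urcorner) = \ulcorner \lnot\phi \urcorner$ and sets $\tau(x) = \lnot \tau^{*}(\nneg(x))$, whereas you represent the same map by a first order formula $\nu(x,y)$ under an existential quantifier, at the cost of the (correctly handled) extra bookkeeping about forced witnesses.
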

\begin{proof}
Let $\tau^*$ be the truth predicate for Dependence Logic of Theorem \ref{theo:DL_TP}, such that
\[
	M_{\omega} \models^+ \phi \mbox{ if and only if } M_{\omega} \models^+ \tau^*(\ulcorner \phi\urcorner)
\]
for all $\phi$. 

Furthermore, let $\nneg(x)$ be a term such that $\nneg(\ulcorner \phi\urcorner) = \ulcorner \lnot \phi \urcorner$; and finally, define $\tau(x) = \lnot \tau(\nneg(x))$. Then for all models $M_{\omega}$ of Peano's Axioms with signature $\Sigma$, 
\begin{align*}
	&M_\omega \pmodels^+ \tau(\ulcorner \phi\urcorner) \Leftrightarrow M_\omega \pmodels^- \tau^*(\nneg(\ulcorner \phi\urcorner)) \Leftrightarrow\\
	&\Leftrightarrow  M_\omega \not \models^+ \tau^*(\ulcorner \lnot \phi\urcorner) \Leftrightarrow M_\omega \not \models^+ \lnot \phi \Leftrightarrow\\
	&\Leftrightarrow M_\omega \not \models^- \phi \Leftrightarrow M_\omega \pmodels^+ \phi
\end{align*}
as required.
\end{proof}
Once again, a difficult and important result for Dependence Logic can be transferred to Paraconsistent Non-dependence Logic with very little trouble! The answer that Paraconsistent Non-dependence Logic gives to the Liar Paradox is, of course, the dialetheist one. It is possible to find an expression $\lambda$ such that $M_\omega \models^+ \tau(\lambda)$ if and only if $M_\omega \models^- \lambda$ for all models of Peano's Axioms $M_\omega$; and for all such models, we will simply have that $\lambda$ and $\tau(\lambda)$ are both true and false. 

Hence, Paraconsistent Non-dependence Logic is a paraconsistent, conservative extension of First Order Logic that can express its own truth predicate! 
\subsection{Validity in Paraconsistent Non-dependence Logic}
In this section, we will examine the following concept of validity in Paraconsistent Non-dependence Logic:
\begin{definition}[P-Validity]
$\\$
A sentence $\phi$ of Paraconsistent Non-dependence Logic is \emph{paraconsistently valid}, or \emph{P-valid}, if and only if $M \pmodels^+ \phi$ for all first order models $M$ whose signature contains that of $\phi$.
\end{definition}
It is easy to see that differently from the case of Dependence Logic, validity in Paraconsistent Non-dependence Logic is semidecidable and reducible to validity in First Order Logic. This follows at once by the equivalence between Paraconsistent Non-dependence Logic and $\Pi_1^1$: every Paraconsistent Non-dependence Logic sentence is logically equivalent to (and effectively translatable into) some sentence of the form $\forall \tuple X \Psi(\tuple X)$, where $\Psi$ is first order, and hence it is valid if and only if $\Psi(\tuple R)$ is valid for some new tuple of relation symbols $\tuple R$. 

What about a proof system for this formalism? Once again, known results from Dependence Logic come to our succor: in particular, we can reduce the problem of finding valid Paraconsistent Non-dependence Logic sentences to the problem of finding \emph{contradictory} Dependence Logic sentences, which can be solved using Kontinen and V\"a\"an\"anen's proof system! 
\begin{theorem}
A sentence $\phi$ is P-valid if and only if for all models $M$, $M \not \models \lnot \phi$ with respect to Dependence Logic, that is, if and only if $\lnot \phi \models \bot$ with respect to the Dependence Logic consequence relation. 
\end{theorem}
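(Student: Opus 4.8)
The plan is to prove the equivalence by chaining together a short sequence of already-established equivalences; the argument is pure dualization and introduces no new constructions. First I would unfold the definition of P-validity: $\phi$ is P-valid precisely when $M \pmodels^+ \phi$ holds for every model $M$ whose signature contains that of $\phi$, which, since $\phi$ is a sentence, amounts to $(M, \{\emptyset\}) \pmodels^+ \phi$ for all such $M$.

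Next I would apply the fundamental link between the paraconsistent and the standard team semantics, namely $(M, X) \pmodels^+ \psi \Leftrightarrow (M, X) \not\models^- \psi$, to the sentence $\phi$. This rewrites the P-validity condition as ``$M \not\models^- \phi$ for every $M$'', i.e.\ that $\phi$ is made false in no model according to Dependence Logic. I would then invoke the duality between truth and falsity under the Dependence Logic negation --- that $\phi$ is false if and only if $\lnot\phi$ is true --- to replace $M \not\models^- \phi$ by $M \not\models^+ \lnot\phi$. Combining the two steps yields that $\phi$ is P-valid if and only if $M \not\models^+ \lnot\phi$ for every $M$; that is, $\lnot\phi$ is positively satisfied in no model, which is exactly the statement $M \not\models \lnot\phi$ of the theorem.

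Finally I would identify this last condition with the consequence-relation statement $\lnot\phi \models \bot$. Here the point requiring care --- and the only mild obstacle in the proof --- is to check that ``$\lnot\phi$ is true in no model'' coincides with ``$\lnot\phi \models \bot$'' for the sentence $\lnot\phi$. Since $\lnot\phi$ has no free variables, locality (Theorem \ref{theo:local}) ensures that $(M, X) \models^+ \lnot\phi$ depends only on whether $X$ is empty, so for every nonempty team $X$ we have $(M, X) \models^+ \lnot\phi \Leftrightarrow M \models^+ \lnot\phi$; and as $\bot$ is satisfied exactly by the empty team, $\lnot\phi \models \bot$ says precisely that no nonempty team positively satisfies $\lnot\phi$, which in turn is equivalent to $M \not\models^+ \lnot\phi$ for all $M$. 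This closes the chain of equivalences and completes the proof.
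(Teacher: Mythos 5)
Your proof is correct and is essentially the paper's own argument: the paper's proof is literally just ``Trivial,'' and what it leaves implicit is exactly your chain of dualizations --- P-validity $\Leftrightarrow$ $M \not\models^- \phi$ for all $M$ (by $(M,X) \pmodels^+ \psi \Leftrightarrow (M,X) \not\models^- \psi$) $\Leftrightarrow$ $M \not\models^+ \lnot\phi$ for all $M$ (by the negation-swap property) $\Leftrightarrow$ $\lnot\phi \models \bot$. Your extra care in the last step, using locality to identify ``true in no model'' with ``no nonempty team satisfies $\lnot\phi$,'' is a legitimate and welcome filling-in of the one detail the paper glosses over.
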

\begin{proof}
Trivial.
\end{proof}
Hence, in order to check whether a sentence $\phi$ is P-valid, it suffices to check whether $\lnot \phi$ entails $\bot$ with respect to the proof system of \cite{kontinenv11}.

Finally, let us check that Paraconsistent Non-dependence Logic can define its own \emph{validity} predicate too. The main ingredient of the proof will be the following lemma: 
\begin{lemma}
For any signature $\Sigma$ exists a first order sentence $\pi_\Sigma$, with signature $\Sigma \cup \{+, \times, P\}$, where $P$ is a new symbol, such that for all models $M_{\omega}$ of Peano's Axioms, $(M_\omega, P) \models \pi_{\Sigma}$ if and only if $P$ represents a provability predicate for Paraconsistent Non-Dependence Logic, in the sense that for all Paraconsistent Non-Dependence Logic sentences $\phi$ and all $p \in M_\omega$
\[
	(p, \ulcorner \phi\urcorner) \in P \Leftrightarrow p \mbox{ encodes a proof of } \phi.
\]
\end{lemma}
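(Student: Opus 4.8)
The plan is to reduce the lemma to the classical arithmetization of syntax, exactly as one does when constructing the standard provability predicate of Peano Arithmetic. The only genuinely logic-specific ingredient is to fix what a \emph{proof of $\phi$} should mean for Paraconsistent Non-dependence Logic. Here I would lean on the immediately preceding theorem, which tells us that $\phi$ is P-valid if and only if $\lnot\phi \models \bot$ with respect to the Dependence Logic consequence relation, together with the fact that this relation is captured by the sound and complete proof system of \cite{kontinenv11}. Accordingly I would declare a proof of $\phi$ to be a code of a derivation, in the Kontinen--V\"a\"an\"anen system, of $\bot$ from the single premise $\lnot\phi$. Since that system has a primitive recursive set of axioms and inference rules, once a G\"odel numbering of $\Sigma$-formulas and of derivations is fixed the set of pairs $(p,c)$ such that $c$ is the code of a PND sentence $\phi$ over $\Sigma$ and $p$ codes such a derivation is a primitive recursive relation $\Pi_\Sigma$.

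First I would fix such a numbering, assigning natural numbers to the finitely many symbols of $\Sigma$ and to the logical vocabulary (connectives, quantifiers, the dependence atom), and then coding finite strings and finite sequences of strings by the usual arithmetical sequence-coding devices available in PA. With this in place, the predicates ``$c$ codes a well-formed PND sentence over $\Sigma$'', ``$p$ codes a well-formed Kontinen--V\"a\"an\"anen derivation'', and ``the conclusion of $p$ is $\bot$ and its premise is the formula coded by $\nneg(c)$'' are each primitive recursive, whence so is membership in $\Pi_\Sigma$. Since every primitive recursive relation is representable in PA by an arithmetical formula, I obtain a first order formula $\mathrm{Prf}_\Sigma(p,c)$ in the language $\{+,\times\}$ that defines $\Pi_\Sigma$ in the standard model and, more to the point, defines the corresponding internal relation in \emph{every} model of Peano's Axioms.

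I would then simply set
\[
	\pi_\Sigma := \forall p\, \forall c\, \bigl(P(p,c) \leftrightarrow \mathrm{Prf}_\Sigma(p,c)\bigr).
\]
By construction, $(M_\omega, P) \models \pi_\Sigma$ holds precisely when $P$ is interpreted as the relation defined by $\mathrm{Prf}_\Sigma$ in $M_\omega$. Reading ``$p$ encodes a proof of $\phi$'' as $M_\omega \models \mathrm{Prf}_\Sigma(p, \ulcorner\phi\urcorner)$, this is exactly the displayed biconditional, so both directions of the lemma follow: if $P$ satisfies $\pi_\Sigma$ it agrees with $\mathrm{Prf}_\Sigma$ and hence encodes proofs correctly, and conversely a $P$ that encodes proofs correctly must coincide with $\mathrm{Prf}_\Sigma$ and therefore satisfies $\pi_\Sigma$.

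The hard part is not conceptual but the usual bookkeeping of arithmetized metamathematics: one must verify that Kontinen--V\"a\"an\"anen derivability of $\bot$ from $\lnot\phi$ is genuinely \emph{decidable}, so that $\Pi_\Sigma$ is primitive recursive rather than merely recursively enumerable, and that the coding of $\Sigma$-syntax can be performed inside PA so that $\mathrm{Prf}_\Sigma$ is an honest arithmetical formula; the dependence of $\pi_\Sigma$ on $\Sigma$ enters only through the finitely many symbol codes hard-wired into this formula. One further point deserves care: for a nonstandard $p$ the phrase ``$p$ encodes a proof of $\phi$'' carries no metatheoretic meaning, so the displayed biconditional has to be understood via $\mathrm{Prf}_\Sigma$; on the standard model this internal notion coincides with the genuine one, and this is the price of pinning the relation down uniformly across all models of arithmetic with a single first order sentence.
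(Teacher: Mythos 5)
Your proposal follows essentially the same route as the paper: the paper's (sketched) proof likewise defines ``$p$ encodes a proof of $\phi$'' as $p$ coding a Kontinen--V\"a\"an\"anen derivation of $\bot$ from $\lnot\phi$, and builds $\pi_\Sigma$ by arithmetically encoding that system's axioms and rules so that $P$ is forced to coincide with this proof relation. Your write-up merely supplies the standard arithmetization details (G\"odel numbering, primitive recursiveness of proof-checking, representability in all models of Peano's Axioms) that the paper omits as posing ``no real difficulties''.
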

We will not show the proof of this lemma here, but it poses no real difficulties: in brief, we encode its axioms and rules of the axiom system for Dependence Logic of \cite{kontinenv11} in the definition of $\pi_{\Sigma}$ in such a way that $(p, \ulcorner \phi \urcorner) \in P$ if and only if $p$ represents a proof of $\lnot \phi \vdash \bot$. Note that $\pi_\Sigma$ only needs to be a \emph{first-order} sentence: informally speaking, this corresponds to the known fact that the problem of the validity of $\Pi_1^1$ sentences can be straightforwardly reduced to the problem of the validity of first-order sentences. 

Now we can prove the main result of this section:
\begin{theorem}
For any finite signature $\Sigma$ containing the binary operations $+$ and $\times$, there exists a Paraconsistent Non-dependence Logic formula $\rho(x)$ with signature $\Sigma$ such that for all models $M_{\omega}$ of Peano's Axioms with signature $\Sigma$ and for all sentences $\phi$ of Paraconsistent Non-dependence Logic, 
\[
	M_{\omega} \pmodels^+ \rho(\ulcorner \phi\urcorner) \Leftrightarrow \phi \mbox{ is P-valid}
\]
\end{theorem}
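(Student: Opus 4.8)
The plan is to reduce P-validity to provability and then to internalize the resulting provability condition as a $\Pi_1^1$ property, which Theorem~\ref{theo:Pi11toPDL} converts back into a Paraconsistent Non-dependence Logic formula. Recall that, as established just above, a sentence $\phi$ is P-valid if and only if $\lnot\phi\models\bot$ in Dependence Logic, that is, if and only if the derivation $\lnot\phi\vdash\bot$ exists in the proof system of \cite{kontinenv11}; by the soundness and completeness of that system this holds precisely when there is a genuine (standard) proof $p$ of $\phi$. Thus P-validity is a purely syntactic, recursively enumerable condition, and the whole difficulty is to express ``there exists a real proof of $\phi$'' by a formula that remains correct over \emph{every} model of Peano's Axioms, including the non-standard ones.

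First I would invoke the preceding Lemma to obtain the first order sentence $\pi_\Sigma$ over $\Sigma\cup\{P\}$ whose expansions $(M_\omega,P)$ are exactly those in which $P$ is the genuine provability predicate, so that $(p,\ulcorner\phi\urcorner)\in P$ holds if and only if $p$ encodes an actual proof of $\phi$. Routing through $\pi_\Sigma$ rather than through the naive arithmetized formula $\exists p\,\mathrm{Proof}(p,\ulcorner\phi\urcorner)$ is essential precisely because the latter is merely $\Sigma_1$ and may acquire spurious non-standard witnesses in non-standard models; the Lemma instead pins down, in every $M_\omega$, the \emph{unique} relation $P$ that contains only standard proofs.

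Next I would form, for a fresh unary relation symbol $S$ playing the role of the team, the sentence
\[
  \Phi(S)\ :=\ \forall P\bigl(\pi_\Sigma\rightarrow\exists p\,\exists y\,(S(y)\wedge P(p,y))\bigr),
\]
in which $S$ occurs only positively and every quantifier except the leading $\forall P$ is first order, so that $\Phi(S)$ is genuinely $\Pi_1^1$ with $S$ positive. The universal second order quantifier is the crucial design choice: since Paraconsistent Non-dependence Logic corresponds to $\Pi_1^1$ and not to $\Sigma_1^1$, one cannot assert $\exists P$ directly; instead one quantifies over all $P$ and exploits the fact that $\pi_\Sigma$ singles out a unique $P$, so that the implication is non-vacuous exactly for the genuine provability predicate. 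Evaluating $\Phi(S)$ at the singleton $S=\{\ulcorner\phi\urcorner\}$, the inner formula collapses to $\exists p\,P(p,\ulcorner\phi\urcorner)$, whence $M_\omega\models\Phi(\{\ulcorner\phi\urcorner\})$ holds if and only if the genuine $P$ carries a real proof of $\phi$, i.e.\ if and only if $\phi$ is P-valid.

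Finally I would apply Theorem~\ref{theo:Pi11toPDL} to $\Phi(S)$ to obtain a Paraconsistent Non-dependence Logic formula $\rho(x)$ with its single free variable $x$ (the one coordinate of $S$) such that $(M_\omega,X)\pmodels^+\rho$ if and only if $M_\omega\models\Phi(X(\{x\}))$ for every team $X$; specialising to $X=\{(x:=\ulcorner\phi\urcorner)\}$ yields $M_\omega\pmodels^+\rho(\ulcorner\phi\urcorner)$ if and only if $\phi$ is P-valid, as required. The one genuinely delicate point is the correctness of the whole construction over non-standard models, and this is exactly what is discharged by the Lemma; granting it, the remaining steps—the polarity check on $S$, the verification that $\Phi(S)$ is $\Pi_1^1$, and the collapse of the inner formula on a singleton—are entirely routine.
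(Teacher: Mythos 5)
Your proposal is correct and takes essentially the same route as the paper's own proof: reduce P-validity to provability in the system of \cite{kontinenv11}, use the Lemma's $\pi_\Sigma$ to pin down the genuine provability predicate inside the $\Pi_1^1$ sentence $\forall P(\pi_\Sigma \rightarrow \exists p\, P(p,\cdot))$, and then convert that sentence into Paraconsistent Non-dependence Logic. The only difference is bookkeeping: you route the G\"odel-number parameter through a positively occurring team predicate $S$ and Theorem \ref{theo:Pi11toPDL}, while the paper substitutes a fresh constant $c$ and invokes the sentence-level $\Pi_1^1$ equivalence -- both are sound, and yours is if anything slightly more careful about how the free variable $x$ is obtained.
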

\begin{proof}
By the previous lemma we know that $\phi$ is valid if and only if whenever $M_{\omega}$ satisfies Peano's Axioms, 
\[
	M_{\omega} \models \forall P(\pi_{\Sigma} \rightarrow \exists p (Pp\ulcorner \phi\urcorner)).
\]
Now let $c$ be a new constant symbol, and consider the expression $\rho(c) = \forall P(\pi_{\Sigma} \rightarrow \exists p (Ppc))$. Since this is a sentence in $\Pi_1^1$, there exists a Paraconsistent Non-dependence Logic sentence $\rho(c)$ which is equivalent to it. Then $\rho(x)$ is the formula that we were searching: indeed, 
\begin{align*}
	M_\omega \models^+ \rho(\ulcorner \phi\urcorner) &\Leftrightarrow M_{\omega} \models \forall P (\pi_\Sigma \rightarrow \exists p (P p \ulcorner \phi\urcorner) ) \Leftrightarrow\\
	&\Leftrightarrow \phi \mbox{ is provable } \Leftrightarrow \phi \mbox{ is P-valid.}
\end{align*}
\end{proof}
Again, the fact that Paraconsistent Non-dependence Logic admits truth gluts deals easily with the resulting paradox of self-reference. It is possible to find a sentence $\nu$ that is valid if and only if $\lnot \rho (\ulcorner \nu\urcorner)$ is valid and, therefore, if and only if $\lnot \nu$ is valid; but this is not a problem, as then we simply have that $\nu$ and $\lnot \nu$ are both valid. 
\subsection{Consistent Sentences in Paraconsistent Non-dependence Logic}
In this subsection, we will attempt to to characterize the sentences of Paraconsistent Non-dependence Logic which are \emph{consistent}, in the sense given by the following definition:
\begin{definition}[Consistent Sentences]
A sentence $\phi$ is consistent if and only if there is no model $M$ such that $M \pmodels^+ \phi$ and $M \pmodels^- \phi$. 
\end{definition}
As we will see, a Paraconsistent Non-dependence Logic sentence is consistent if and only if it is equivalent to some First Order Logic sentence. This shows that Paraconsistent Non-dependence Logic is a very ``cautious'' sort of paraconsistent logic, in the sense that its consistent kernel is precisely First Order Logic itself. 

Our result is based on the \emph{flattening technique} from (\cite{vaananen07}, \S 3.5):
\begin{definition}[Flattening]
Let $\psi$ be a Paraconsistent Non-dependence Logic formula. Its \emph{flattening} $\psi^f$ is the first order formula defined recursively as follows:
\begin{enumerate}
\item If $\psi$ is a first order atom, $\psi^f = \psi$;
\item For all dependence atoms $=\!\!(\tuple t_1, \tuple t_2)$, $(=\!\!(\tuple t_1, \tuple t_2))^f = \top$;
\item $(\lnot \psi)^f = \lnot (\psi)^f$;
\item $(\psi \vee \theta)^f = \psi^f \vee \theta^f$; 
\item $(\exists v \psi)^f = \exists v (\psi^f)$.
\end{enumerate}
\end{definition}
\begin{proposition}
For all models $M$, teams $X$ over $M$, formulas $\psi$ of Paraconsistent Non-dependence Logic over the signature of $M$ and with free variables in $X$, and $d \in \{+,-\}$,
\[
	(M, X) \pmodels^d \psi^f \Rightarrow (M, X) \pmodels^d \psi.
\]
\end{proposition}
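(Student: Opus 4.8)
The plan is to prove the statement by a single structural induction on $\psi$, carried out simultaneously for both values $d \in \{+,-\}$ and \emph{for all} teams $X$. This strengthening of the induction hypothesis is the crux of the bookkeeping: the clause for negation interchanges $+$ and $-$, so the hypothesis must be available at the opposite sign, while the negative clause for disjunction and both clauses for the existential quantifier pass to sub-teams or to expanded teams, so the hypothesis must be available at those teams too. Throughout I would unwind the satisfaction conditions of $\psi^f$ using exactly the clauses of Theorem \ref{theo:parateam}, observing that apart from the atoms, $\psi$ and $\psi^f$ share the same outermost connective, so the compositional rule applied to $\psi^f$ matches the one applied to $\psi$.

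For the base cases: when $\psi$ is a first order atom we have $\psi^f = \psi$, so the implication is in fact an equivalence. When $\psi$ is a dependence atom $=\!\!(\tuple t_1, \tuple t_2)$ we have $\psi^f = \top$. For $d = +$, both $(M,X) \pmodels^+ \top$ and $(M,X) \pmodels^+ =\!\!(\tuple t_1,\tuple t_2)$ reduce to the single condition $X \neq \emptyset$ (by PTS-atom$^+$ and PTS-dep$^+$), again yielding an equivalence. For $d = -$, the condition $(M,X) \pmodels^- \top$ demands some $s \in X$ with $M \not\models_s \top$, which never holds; the antecedent is therefore false and the implication holds vacuously. This is exactly the asymmetry one expects: flattening can only ever \emph{weaken} the dependence atom, and here the weakening sits on the side of falsity.

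For the inductive step, in the negation case $(\lnot\psi)^f = \lnot(\psi^f)$, and by PTS-neg the hypothesis $(M,X) \pmodels^d \lnot(\psi^f)$ unwinds to $(M,X) \pmodels^{d^*} \psi^f$ with $d^*$ the opposite sign; the induction hypothesis at $d^*$ gives $(M,X) \pmodels^{d^*} \psi$, whence $(M,X) \pmodels^d \lnot\psi$. For disjunction, $(\psi\vee\theta)^f = \psi^f \vee \theta^f$: when $d = +$, PTS-$\vee^+$ hands us one disjunct already satisfied by $X$, to which the induction hypothesis applies directly; when $d = -$, I fix an arbitrary split $X = Y \cup Z$, read off from PTS-$\vee^-$ that $(M,Y)\pmodels^-\psi^f$ or $(M,Z)\pmodels^-\theta^f$, and apply the induction hypothesis to the sub-teams $Y$ and $Z$. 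The existential case $(\exists v\psi)^f = \exists v(\psi^f)$ is handled the same way, applying the induction hypothesis to the single expanded team $X[M/v]$ when $d = +$ (via PTS-$\exists^+$) and to each $X[F/v]$ when $d = -$ (via PTS-$\exists^-$).

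I do not expect any genuine mathematical obstacle here; the only delicate point is discipline in stating the hypothesis broadly enough, so that it is available on $Y$, $Z$, $X[M/v]$ and $X[F/v]$ and at the flipped sign. As an independent sanity check, the whole statement is precisely the dual of the ordinary Dependence Logic flattening inequality: using the identity $(M,X)\pmodels^d\chi \Leftrightarrow (M,X)\not\models^{d^*}\chi$ recorded just before Theorem \ref{theo:parateam}, the desired implication $(M,X)\pmodels^d\psi^f \Rightarrow (M,X)\pmodels^d\psi$ contraposes exactly to $(M,X)\models^{d^*}\psi \Rightarrow (M,X)\models^{d^*}\psi^f$, i.e.\ the familiar statement that standard satisfaction of a Dependence Logic formula implies satisfaction of its flattening. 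Either route works; I would present the direct induction above as the primary argument, since it is self-contained and relies only on the rules of Theorem \ref{theo:parateam}.
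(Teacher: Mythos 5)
Your proposal is correct and matches the paper's own proof: the paper first notes that the result ``can be proved by induction with little difficulty'' (the induction you carry out in detail, which is sound, including the vacuous $d=-$ case for dependence atoms) and then gives, as its only spelled-out argument, exactly your ``sanity check'' --- contraposing via the identity $(M,X) \pmodels^d \chi \Leftrightarrow (M,X) \not\models^{d^*} \chi$ to reduce the claim to the standard flattening result for Team Semantics (Proposition 3.40 of V\"a\"an\"anen's book). The only difference is one of emphasis: you foreground the induction and relegate the contraposition to a check, while the paper does the reverse.
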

\begin{proof}
This can be proved by induction with little difficulty. Alternatively, we may recall from Proposition 3.40 of \cite{vaananen07} that if $(M, X) \models^d \psi$ then $(M, X) \models^d \psi^f$ and that, by definition, $(M, X) \pmodels^d \psi$ if and only if $(M, X) \not \models^{d^*} \psi$ and $(M, X) \pmodels^d \psi^f$ if and only if $(M, X) \not \models^{d^*} \psi^f$. The result then follows at once. 
\end{proof}
We now have all that we need to prove our main result for this subsection: 
\begin{theorem}
Let $\phi$ be a Paraconsistent Non-dependence Logic sentence that is consistent. Then $\phi$ is logically equivalent to $\phi^f$. 
\end{theorem}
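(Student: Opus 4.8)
The plan is to establish, for every model $M$ over the signature of $\phi$, the four implications making up $M \pmodels^+ \phi \Leftrightarrow M \pmodels^+ \phi^f$ and $M \pmodels^- \phi \Leftrightarrow M \pmodels^- \phi^f$. First note that, since flattening replaces each dependence atom by $\top$, we have $\free(\phi^f) \subseteq \free(\phi) = \emptyset$, so $\phi^f$ is a first order sentence; and by locality together with the fact that the empty team satisfies nothing, the sentence-level equivalence at $X = \{\emptyset\}$ will at once give the full team-level equivalence.

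The two ``$\Leftarrow$'' directions are free: the preceding Proposition, specialised to $X = \{\emptyset\}$, yields $M \pmodels^+ \phi^f \Rightarrow M \pmodels^+ \phi$ and $M \pmodels^- \phi^f \Rightarrow M \pmodels^- \phi$, and these require no assumption on $\phi$ at all.

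The substance of the argument is the two converse directions, and this is exactly where consistency is used. The key observation is that the first order sentence $\phi^f$ is \emph{bivalent}: by the first order clauses \textbf{PTS-FO}$^+$ and \textbf{PTS-FO}$^-$ evaluated at $\{\emptyset\}$, $M \pmodels^+ \phi^f$ holds iff $M \models \phi^f$ in Tarski's sense and $M \pmodels^- \phi^f$ holds iff $M \not\models \phi^f$, so precisely one of $M \pmodels^+ \phi^f$, $M \pmodels^- \phi^f$ obtains. Now suppose $M \pmodels^+ \phi$ and, towards a contradiction, $M \not\pmodels^+ \phi^f$; bivalence forces $M \pmodels^- \phi^f$, and the preceding Proposition then gives $M \pmodels^- \phi$, so $\phi$ is both true and false in $M$, contradicting its consistency. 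Hence $M \pmodels^+ \phi \Rightarrow M \pmodels^+ \phi^f$. Dually, assuming $M \pmodels^- \phi$ together with $M \not\pmodels^- \phi^f$, bivalence gives $M \pmodels^+ \phi^f$, the Proposition gives $M \pmodels^+ \phi$, and consistency is again violated; so $M \pmodels^- \phi \Rightarrow M \pmodels^- \phi^f$.

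The only real obstacle is to invoke the consistency hypothesis in precisely the right spot. Because the flattening Proposition runs in one direction only---from $\phi^f$ to $\phi$---neither converse can be read off directly; each must instead be manufactured by pairing the bivalence of the first order $\phi^f$ against the no-glut condition guaranteed by consistency. With the four implications in hand, $M \pmodels^+ \phi \Leftrightarrow M \pmodels^+ \phi^f$ and $M \pmodels^- \phi \Leftrightarrow M \pmodels^- \phi^f$ hold for every $M$, which is the claimed equivalence of $\phi$ and $\phi^f$ with respect to both truth and falsity.
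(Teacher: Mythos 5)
Your proof is correct and takes essentially the same approach as the paper's: both rely on the flattening proposition for the implications from $\phi^f$ to $\phi$, and both obtain the converse implications by playing the bivalence of the first order sentence $\phi^f$ against the consistency of $\phi$. The only difference is presentational---you argue by contradiction where the paper argues by contraposition---so there is nothing substantive to change.
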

\begin{proof}
We already know that for all models $M$ and $d \in \{+,-\}$, if $M \pmodels^d \phi^f$ then $M \pmodels^d \phi$. Hence, we only need to verify the converse implication. 

Suppose that $M \not \pmodels^+ \phi^f$: then, since $\phi^f$ is first order, $M \pmodels^- \phi^f$, and therefore $M \pmodels^- \phi$, and therefore - since $\phi$ is consistent - $M \not \pmodels^+ \phi$. Similarly, if $M \not \pmodels^- \phi^f$ then $M \pmodels^+ \phi^f$, and therefore $M \pmodels^+ \phi$ and $M \not \pmodels^- \phi$, as required.
\end{proof}

\section{Generalized Dependencies in Paraconsistent Team Semantics}
Even though functional dependency was the first dependency notion which was studied in the context of Team Semantics, it was by no means the only one. Many other notions of dependence or independence atoms have been studied in this context, such as for example the \emph{independence atoms} \cite{gradel13}
\begin{description}
\item[TS-ind] $(M, X) \models^+ \tuple t_1 ~\bot~ \tuple t_2$ if and only if $X(\tuple t_1 \tuple t_2) = X(\tuple t_1) \times X(\tuple t_2)$,
\end{description}
the \emph{inclusion} and \emph{exclusion} atoms \cite{galliani12}
\begin{description}
\item[TS-inc] $(M, X) \models^+ \tuple t_1 \subseteq \tuple t_2$ if and only if $X(\tuple t_1) \subseteq X(\tuple t_2)$, 
\item[TS-exc] $(M, X) \models^+ \tuple t_1 | \tuple t_2$ if and only if $X(\tuple t_1) \cap X(\tuple t_2) = \emptyset$, 
\end{description}
or the \emph{multivalued dependence atoms} \cite{engstrom12}
\begin{description}
\item[TS-mdep] $(M, X) \models^+ \tuple x \twoheadrightarrow \tuple y$ if and only if for every $s, s'\in X$ with $s(\tuple x) = s'(\tuple x)$ there exists a $s'' \in X$ such that $s''(\tuple x \tuple y) = s(\tuple x \tuple y)$ and $s''(z) = s'(z)$ for all variables $z \in \dom(X) \backslash \{\tuple x \tuple y\}$;
\end{description}
and the study of the resulting logics and their relationship (in particular, of the \emph{definability} relations between such dependency notions) is a currently very active area of research. As always, there is little trouble transferring the resulting findings from the case of team semantics to the case of paraconsistent team semantics; for instance, since we know from \cite{galliani12} that all nonempty NP properties of teams correspond to the satisfaction conditions of formulas of Independence Logic (that is, of First Order Logic with team semantics plus independence atoms), it follows at once that all co-NP properties of teams correspond to satisfaction conditions of formulas for the paraconsistent dual of this logic. 

In this section, we will focus on the case of \emph{Paraconsistent Dependence Logic}, that is, the logic obtained by adding directly the functional dependency atom $=\!\!(\tuple t_1, \tuple t_2)$ of Team Semantics to the Paraconsistent Team Semantics rather than transforming it into its dual $\not = \!\!(\tuple t_1, \tuple t_2)$. As we will see, every sentence of Paraconsistent Dependence Logic is logically equivalent to some sentence of First Order Logic.

Thus, functional dependence atoms - or, as we will see, any other first-order, downwards-closed dependency conditions - do not increase the expressive power of First Order Logic with Paraconsistent Team Semantics. It is important, however, to keep in mind that this result applies to \emph{sentences} and not to formulas: indeed, it is easy to see that the functional dependence atom $=\!\!(\emptyset, x)$ is not logically equivalent (in Paraconsistent Team Semantics) to any formula of Paraconsistent Dependence Logic (and, in particular, to any first order formula), since it is not upwards closed in the sense of Theorem \ref{theo:uc}. Nonetheless, any sentence which uses such an atom can be converted into one which does not use it.

Let us begin with some definitions: 
\begin{definition}[Dependencies and First Order Dependencies]
Let $k \in \mathbb N$. A \emph{dependency} $\Dep$ is a family, closed under isomorphisms, of models over the signature $\{P\}$, where $P$ is a $k$-ary predicate symbol. The corresponding satisfiability conditions in Team Semantics and Paraconsistent Team Semantics are given by
\begin{description}
\item[TS-$\Dep^+$:] $(M, X) \models^+ \Dep \tuple t$ if and only if $(\dom(M), X(\tuple t)) \in \Dep$; 
\item[TS-$\Dep^-$:] $(M, X) \models^- \Dep \tuple t$ if and only if $X = \emptyset$;
\item[PTS-$\Dep^+$] $(M, X) \pmodels^+ \Dep \tuple t$ if and only if $(\dom(M), X(\tuple t)) \in \Dep$;
\item[PTS-$\Dep^-$] $(M, X) \pmodels^- \Dep \tuple t$ if and only if $X \not = \emptyset$.\footnote{For simplicity's sake, we limit ourselves to assigning non-trivial conditions to \emph{positive} satisfiability. However, there are no great difficulties in generalizing this approach to the case in which nontrivial positive and negative satisfaction conditions. The reason why a negated dependence atom is assumed to be true only in the empty set in the case of Team Semantics and to be true in all teams \emph{except} the empty set in the case of Paraconsistent Team Semantics is that we want to maintain the correspondence between team semantics and game-theoretic semantics insofar as it is possible, and either player has - trivially - a winning strategy for all games starting from the empty set of winning strategies.}
\end{description}
Such a dependency is said to be \emph{first order} if there exists a first-order sentence $\Dep^*$ such that 
\[
M \in \Dep \Leftrightarrow M \models \Dep^*
\]
for all models $M = (A, S)$ over the signature $\{P\}$. Given a family of dependencies $\DD$, we write $\FO(\DD)$ and $\PFO(\DD)$ for the logics obtained by adding all these dependencies to the language of first-order logic (with Team Semantics or Paraconsistent Team Semantics, respectively).
\end{definition}

Even if $\Dep$ is a first order dependency, there is in general no guarantee that $\FO(\Dep)$ and $\PFO(\Dep)$ are no more expressive than First Order Logic. For instance, all the non-dependency atoms $\not =\!\! (\tuple t_1, \tuple t_2)$ are first order, for $\Dep^* = \exists \tuple x \tuple y_1 \tuple y_2 (P \tuple x \tuple y_1 \wedge P \tuple x \tuple y_2 \wedge \tuple y_1 \not = \tuple y_2)$; and, as we saw, all $\Pi_1^1$ properties are expressible in terms of such dependencies (and of the language of First Order Logic, interpreted according to Paraconsistent Team Semantics). The same can be said about functional dependency atoms in team semantics. 

\begin{definition}[Downwards and Upwards Closed Dependencies]
A dependency $\Dep$ is said to be \emph{downwards closed} if and only if 
\begin{equation*}
 (A, S) \in \Dep, S' \subseteq S \Rightarrow (A, S') \in \Dep
\end{equation*}
for all models $(A, S)$ over the signature $\{P\}$. Similarly, $\Dep$ is said to be \emph{upwards closed} if and only if 
\begin{equation*}
 (A, S) \in \Dep, S \subseteq S' \Rightarrow (A, S') \in \Dep.
\end{equation*}
\end{definition}

We will make use of the following result, from \cite{galliani13e}:
\begin{theorem}
Let $\DD$ be a family of first order, upwards closed dependencies. Then every sentence $\phi$ of $\FO(\DD)$ is positively logically equivalent to some first order sentence $\phi'$, in the sense that 
\begin{equation*}
M \models^+ \phi \Leftrightarrow M \models \phi'
\end{equation*}
for all suitable models $M$.\footnote{However, it is not in general guaranteed that the negation of $\phi$ in Team Semantics is equivalent to the negation of $\phi'$ in Tarski's semantics.}
\end{theorem}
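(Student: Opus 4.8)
The plan is to prove the equivalence by structural induction, exploiting the fact that $\FO(\DD)$ with upward-closed dependencies is \emph{union closed} and that its only ``downward pressure'' comes from first-order literals. First I would isolate the auxiliary closure lemma: for every $\psi \in \FO(\DD)$, if $(M, X_i) \models^+ \psi$ for all $i$ in some family of teams, then $(M, \bigcup_i X_i) \models^+ \psi$. This is immediate for first-order literals (which must hold at \emph{every} assignment, a property preserved under unions) and for the dependency atoms $\Dep\,\tuple t$, since $\bigl(\bigcup_i X_i\bigr)(\tuple t) \supseteq X_i(\tuple t)$ and $\Dep$ is upward closed so that $(\dom(M), X_i(\tuple t)) \in \Dep$ forces $(\dom(M), (\bigcup_i X_i)(\tuple t)) \in \Dep$. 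It then propagates through $\vee, \wedge, \exists, \forall$ by taking the pointwise union of the witnessing subteams and of the choice functions $F \colon X \to \parts(\dom(M)) \setminus \{\emptyset\}$. Note the asymmetry this reveals: the logic is union closed but \emph{not} downward closed, and this mismatch is exactly what the proof must exploit.

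Union closure is useful because it guarantees, at each existential step, a unique \emph{maximal} witnessing choice function $F^{*}$ (the pointwise union of all good ones, which is itself good by the lemma), and likewise a unique maximal witnessing split for each disjunction. Hence $\eloise$ has a canonical ``greediest'' witnessing strategy, and the truth of a sentence $\phi$ in $M$ is witnessed by a canonically determined sequence of teams, obtained deterministically at universal quantifiers and conjunctions and by the maximal choice at existentials and disjunctions. The core claim to establish by induction is that for each subformula occurrence this canonical team, projected onto the relevant tuple, is definable by a first-order formula with the outer universally quantified variables as parameters. Granting this, evaluating an atom $\Dep\,\tuple t$ on the canonical team reduces to substituting a first-order-definable relation for $P$ in the defining sentence $\Dep^{*}$; and since $\Dep$ is upward closed, Lyndon's positivity theorem lets us take $\Dep^{*}$ with $P$ occurring only positively, so the substitution behaves monotonically under enlargement of the team and the maximal choice may be used without loss. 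Reading off the resulting quantifier-prefix-plus-matrix then yields the first-order $\phi'$, which may be viewed as a first-order \emph{flattening} of $\phi$ appropriate to upward-closed atoms.

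The main obstacle is the genuine tension between the two kinds of leaves of the syntax tree: first-order literals are downward closed and therefore favour \emph{small} teams, whereas the upward-closed dependency atoms favour \emph{large} ones, so it is not a priori clear that one team can be optimal for both. The heart of the argument is to show that the maximal team compatible with the first-order constraints along each branch is simultaneously optimal for the dependency atoms — this is precisely where union closure and the positivity of $\Dep^{*}$ do the work — and that this maximal compatible team is first-order definable uniformly across models. I expect the delicate point to be making the definability of the maximal witnessing choice functions precise across the existential and disjunctive steps, since there the canonical team is a union over \emph{all} successful continuations, and one must argue that membership in it is a local, first-order-testable condition rather than a genuinely second-order one; carrying the right inductive invariant through these cases is the real content. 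Finally, I would stress (in agreement with the footnote) that only the positive component transfers: the construction controls $\models^{+}$ alone and says nothing about $\models^{-}$, so $\phi'$ need not be equivalent to $\phi$ with respect to falsity.
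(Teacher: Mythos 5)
Your preparatory steps are sound: upwards closure does imply closure under unions (your pointwise-union argument for choice functions and for disjunction splits is correct), so maximal witnessing choice functions and maximal witnessing splits exist; and your guiding idea --- that the team cut out by the first-order constraints along a branch should simultaneously be the best possible team for the dependency atoms --- is exactly the right one. The problem is that the proposal stops where the proof has to start. Your ``core claim,'' the first-order definability of the canonical teams, is never argued, and it cannot be obtained by the induction you envisage, because your canonical teams are defined as unions over all \emph{successful} continuations of $\phi$ itself: membership in such a union is prima facie a $\Sigma_1^1$ condition, and the operator $\mathrm{Max}(\psi,V)=\bigcup\{Z \subseteq V : (M,Z)\models^+ \psi\}$, which your maximal choices at $\exists$ and $\vee$ invoke for arbitrary subformulas $\psi$, fails to commute with $\wedge$ and $\forall$. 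Concretely, over a domain $\{a,b,c\}$ whose elements are named by constants, let $\Dep_2$ be the (first-order, upwards closed) atom ``at least two elements,'' and take $\psi=(x=a \vee x=b)\wedge \Dep_2(x)$, $\theta=(x=b \vee x=c)\wedge \Dep_2(x)$; writing teams over $\{x\}$ as sets of values, $\mathrm{Max}(\psi)=\{a,b\}$ and $\mathrm{Max}(\theta)=\{b,c\}$, yet $\mathrm{Max}(\psi\wedge\theta)$ is empty, not $\{b\}$. So treating $\wedge$ and $\forall$ as ``deterministic'' does not dispose of them: the maximal choices at $\exists/\vee$ nodes must be computed for subformulas that contain $\wedge$ and $\forall$, and exactly there the invariant you hope to carry breaks down. (Note also that this paper contains no proof of the statement --- it is imported from \cite{galliani13e} --- so that is the argument to compare against.)

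The way to close the gap is to define the canonical teams not from $\phi$ but from its flattening $\phi^f$, a notion this paper already uses: start from $\{\emptyset\}$; at $\wedge$ and $\forall$ pass $X$, respectively $X[M/v]$, down unchanged; at $\vee$ and $\exists$ keep exactly those assignments (respectively, extensions) that satisfy the flattening of the corresponding subformula in the Tarskian sense. These teams are first-order definable by construction --- each is cut out by the conjunction of the flattenings collected along its path --- with no reference to successful continuations. One then proves the biconditional: $M\models^+\phi$ iff $M\models\phi^f$ and, for every occurrence of a dependency atom $\Dep\tuple t$, the canonical team $W$ at that occurrence satisfies $(\dom(M),W(\tuple t))\in\Dep$. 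Right to left, the canonical tree is then literally a witness for $\phi$. Left to right, any witness for $\phi$ is also a witness for $\phi^f$, and, since team satisfaction implies satisfaction of the flattening (Proposition 3.40 of \cite{vaananen07}), it sits pointwise inside the canonical tree; upwards closure of $\Dep$ then transports satisfaction of each atom from the smaller team to the canonical one --- this, not Lyndon positivity, is where the hypothesis does its work. The sentence $\phi'$ is $\phi^f$ conjoined with, for each atom occurrence, $\Dep^*$ with $P$ replaced by the first-order definition of $W(\tuple t)$. Your maximal witnessing teams do coincide with these canonical teams whenever $\phi$ is true, but that is a corollary of this argument, not a lemma available before it.
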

\begin{corollary}
Let $\not =\!\!(\cdot, \cdot)$ represent the family of all non-dependency atoms. Then every sentence of $\FO(=\!\!(\cdot, \cdot))$ is equivalent to some first order sentence. 
\end{corollary}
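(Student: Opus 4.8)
The plan is to read this as an immediate instance of the preceding Theorem from \cite{galliani13e}. That theorem reduces every \emph{sentence} of $\FO(\DD)$ to a first order sentence whenever $\DD$ is a family of first order, upwards closed dependencies, so all I need to do is check that the non-dependency family $\not =\!\!(\cdot, \cdot)$ meets both hypotheses; the corollary then follows by instantiating $\DD = \{\not =\!\!(\cdot,\cdot)\}$. (Here the relevant reading is the \emph{Team Semantics} one, under which a non-dependency dependency $\Dep$ holds positively exactly when the projected team fails to be a functional dependency; this is the upwards closed family, not its Paraconsistent counterpart.)

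First I would recall that first-orderness has already been exhibited in the discussion preceding the corollary: each non-dependency dependency $\Dep$ is defined over the signature $\{P\}$ by the sentence
\[
\Dep^* = \exists \tuple x \tuple y_1 \tuple y_2 (P \tuple x \tuple y_1 \wedge P \tuple x \tuple y_2 \wedge \tuple y_1 \not = \tuple y_2),
\]
so that $(A, S) \in \Dep \Leftrightarrow (A, S) \models \Dep^*$. Next I would verify upwards closure. The crucial structural feature of $\Dep^*$ is that it is a purely existential sentence in which the sole relation symbol $P$ (the one interpreting the projected team $S$) occurs only positively. Hence, if $(A, S) \models \Dep^*$ — that is, $S$ contains two tuples agreeing on their $\tuple x$-coordinates but differing on their $\tuple y$-coordinates — then passing to any $S' \supseteq S$ leaves these witnesses in place, so $(A, S') \models \Dep^*$ as well. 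Thus $(A, S) \in \Dep$ and $S \subseteq S'$ imply $(A, S') \in \Dep$, which is precisely the upwards-closure condition.

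With both hypotheses confirmed, the Theorem applies to $\DD = \{\not =\!\!(\cdot,\cdot)\}$ and produces, for every sentence $\phi$ of $\FO(\not =\!\!(\cdot,\cdot))$, a first order sentence $\phi'$ with $M \models^+ \phi \Leftrightarrow M \models \phi'$ for all suitable $M$, which is the asserted equivalence. I do not expect a genuine obstacle here: the entire content is the upwards-closure check, and that check is routine once one observes the existential, $P$-positive shape of $\Dep^*$. The only conceptual point demanding care is to keep the Team-Semantics interpretation of $\not =\!\!(\cdot,\cdot)$ in view — under the \emph{Paraconsistent} reading such atoms already capture all of $\Pi_1^1$ (as noted earlier in this section), and so are emphatically \emph{not} reducible to first order logic; it is exactly the switch to ordinary Team Semantics that makes the family upwards closed and the theorem applicable.
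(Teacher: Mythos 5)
Your proposal is correct and follows essentially the same route as the paper, whose entire proof is the one-line observation that non-dependency atoms are first order and upwards closed, so that the preceding theorem applies. You merely make explicit what the paper leaves implicit — the defining sentence $\Dep^*$, the witness-preservation argument for upwards closure, and the fact that the statement must be read as concerning $\FO(\not =\!\!(\cdot,\cdot))$ under ordinary Team Semantics — all of which is faithful to the intended argument.
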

\begin{proof}
It suffices to observe that all such atoms are first order and upwards closed.
\end{proof}

Another ingredient of our proof will be the following observation: 
\begin{lemma}
Let $=\!\!(\cdot, \cdot)$ represent the family of all functional dependency atoms and the family of all functional non-dependency atoms respectively. Then for any formula $\phi$ of $\PFO(=\!\!(\cdot, \cdot))$ there exists some formula $\phi^*$ of $\FO(\not =\!\!(\cdot, \cdot))$ such that 
\begin{equation*}
(M, X) \pmodels^\pm \phi  \Leftrightarrow (M, X) \not \models^\pm \phi^*.
\end{equation*}
\end{lemma}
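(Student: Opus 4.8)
The plan is to exhibit the translation explicitly as a syntactic ``dual negation'' and then verify it by a simultaneous induction on the two polarities. First I would rewrite the goal: the asserted equivalence $(M,X) \pmodels^\pm \phi \Leftrightarrow (M,X) \not\models^\pm \phi^*$ is, after transposition, exactly the invariant
\[
(M,X) \models^d \phi^* \Leftrightarrow (M,X) \not\pmodels^d \phi \qquad (d \in \{+,-\}),
\]
and this is the statement I would actually prove by induction on $\phi$. I would define $(\cdot)^*$ by: $(R \tuple t)^* = \lnot R \tuple t$ and $(t_1 = t_2)^* = \lnot (t_1 = t_2)$ on first-order atoms; $(=\!\!(\tuple t_1, \tuple t_2))^* = {\not =}\!\!(\tuple t_1, \tuple t_2)$ on dependence atoms; and $(\lnot \psi)^* = \lnot \psi^*$, $(\psi \vee \theta)^* = \psi^* \wedge \theta^*$, $(\exists v\, \psi)^* = \forall v\, \psi^*$ on the connectives. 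Since $\phi^*$ then only uses first-order literals, the non-dependency atoms, and the first-order connectives, it is a formula of $\FO({\not =}\!\!(\cdot, \cdot))$, as required, and clearly $\free(\phi^*) = \free(\phi)$.

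The conceptual heart is the atomic base case, which is where the directly-added functional dependency atom of $\PFO$ is matched against its standard-semantics dual. For the positive polarity, PTS-$\Dep^+$ gives $(M,X) \pmodels^+ =\!\!(\tuple t_1, \tuple t_2)$ iff $\tuple t_2$ is a function of $\tuple t_1$ on $X$; its Boolean negation---the existence of two assignments agreeing on $\tuple t_1$ but differing on $\tuple t_2$---is precisely the condition TS-$\Dep^+$ assigns to $(M,X) \models^+ {\not =}\!\!(\tuple t_1, \tuple t_2)$. For the negative polarity, PTS-$\Dep^-$ gives $(M,X) \pmodels^- =\!\!(\tuple t_1, \tuple t_2)$ iff $X \neq \emptyset$, whose negation $X = \emptyset$ is exactly what TS-$\Dep^-$ assigns to $(M,X) \models^- {\not =}\!\!(\tuple t_1, \tuple t_2)$. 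For first-order atoms, the clause PTS-atom$^\pm$ existentially quantifies over $s \in X$ while TS-atom$^\pm$ universally quantifies over $s \in X$; this polarity-switching ``for some'' versus ``for all'' is absorbed exactly by negating the atom, which is why $(R \tuple t)^* = \lnot R \tuple t$ works.

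For the inductive step I would match each constructor against the complementary pair of clauses. Negation is immediate, since $(\lnot\psi)^* = \lnot\psi^*$ and PTS-neg$^\pm$, TS-neg$^\pm$ simply swap polarities. Disjunction translated to conjunction is a De Morgan step at both polarities: the positive clauses PTS-$\vee^+$ (``$\psi$ or $\theta$'') and TS-$\wedge^+$ (``$\psi^*$ and $\theta^*$'') are complementary under the induction hypothesis, and at the negative polarity the universal-over-splittings clause PTS-$\vee^-$ matches the existential-over-splittings clause TS-$\wedge^-$. The positive existential is a clean pass-through, as PTS-$\exists^+$ and TS-$\forall^+$ both act on the single duplicated team $X[M/v]$. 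The one step requiring genuine care---and the main obstacle---is the negative-polarity existential, which pits PTS-$\exists^-$ (for $\exists v\,\psi$) against TS-$\forall^-$ (for its translate $\forall v\,\psi^*$). There TS-$\forall^-$ supplies a single-valued choice function $F : X \to \dom(M)$, whereas PTS-$\exists^-$ ranges over set-valued supplementing functions $G : X \to \parts(\dom(M)) \bs \{\emptyset\}$. I would reconcile them by noting that every $F$ arises as such a $G$ via $G(s) = \{F(s)\}$, while conversely any witnessing $G$ can be thinned to a selector $F$ with $F(s) \in G(s)$, so that $X[F/v] \subseteq X[G/v]$; then, since paraconsistent satisfaction is upward closed (Theorem \ref{theo:uc}), the relation $\not\pmodels^-$ is downward closed, and a failure of $(M, X[G/v]) \pmodels^- \psi$ descends to the smaller team $X[F/v]$. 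This matching of the set-valued and single-valued quantifier witnesses is exactly where the asymmetry between the two semantics is discharged; once it is in place the induction goes through and the lemma follows.
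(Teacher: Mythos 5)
Your proposal is correct and takes essentially the same route as the paper: the paper defines $\phi^*$ as $\lnot \phi$ with every dependence atom $=\!\!(\tuple t_1, \tuple t_2)$ replaced by $\lnot \not=\!\!(\tuple t_1, \tuple t_2)$, which is exactly your compositional translation after pushing the outer negation inward by De Morgan and double negation, and it then verifies the same two-polarity invariant by induction on $\phi$. Your explicit reconciliation of the set-valued witnesses in PTS-$\exists^-$ with the single-valued witnesses in TS-$\forall^-$ (via selectors $F(s) \in G(s)$ and closure under sub/superteams) spells out a step the paper leaves implicit, but it is a point of rigor rather than a different approach.
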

\begin{proof}
Let $\phi^*$ be the formula obtained from $\lnot \phi$ by replacing all dependence atoms $=\!\!(\tuple t_1, \tuple t_2)$ with negations of the the corresponding non-dependence atoms $\lnot \not =\!\!(\tuple t_1, \tuple t_2)$. We show, by induction on $\phi$, that $\phi^*$ works as required: 
\begin{itemize}
\item For all first-order atoms $R \tuple t$, $(M, X) \pmodels^\pm R \tuple t \Leftrightarrow (M, X) \not \models^\mp R \tuple t \Leftrightarrow (M, X) \not \models^\pm \lnot R \tuple t$, as required. 
\item $(M, X) \pmodels^+ =\!\!(\tuple t_1, \tuple t_2)$ if and only if any two $s, s' \in X$ which coincide over $\tuple t_1$ also coincide over $\tuple t_2$. But this is the case if and only if $(M, X) \not \models^+ \not = \!\!(\tuple t_1, \tuple t_2)$, that is, if and only if $(M, X) \not \models^+ \lnot \lnot \not =\!\!(\tuple t_1, \tuple t_2)$, as required. 

On the other hand, $(M, X) \pmodels^- =\!\!(\tuple t_1, \tuple t_2)$ if and only if $X$ is nonempty; and that is the case if and only if $(M, X) \not \models^- \lnot \lnot \not = \!\!(\tuple t_1, \tuple t_2)$, since any generalized dependence atom is satisfied by a team $X$ in Team Semantics if and only if the team is empty.
\item $(M, X) \pmodels^\pm \lnot \phi$ if and only if $(M, X) \pmodels^\mp \phi$: and by induction hypothesis, this is the case if and only if $(M, X) \not \models^\mp \phi^*$, that is, if and only if $(M, X) \not \models^\pm (\lnot \phi)^*$.
\item $(M, X) \pmodels^+ \psi \vee \theta$ if and only if $(M, X) \pmodels^+ \psi$ or $(M, X) \pmodels^+ \theta$, that is, by induction hypothesis, if and only if $(M, X) \not \models^+ \psi^*$ or $(M, X) \not \models \theta^*$. This is the case if and only if $(M, X) \not \models^+ \psi^* \wedge \theta^*$, which by De Morgan's Laws\footnote{Since we defined conjunction and universal quantification using De Morgan's Laws, and due to the fact that our definition of negation respects the rule of double negation, these laws hold in both Team Semantics and Paraconsistent Team Semantics.} is easily seen to be equivalent to $(\psi \vee \theta)^*$.

Furthermore, $(M, X) \pmodels^- \psi \vee \theta$ if and only if whenever $X = Y \cup Z$, $(M, Y) \pmodels^- \psi$ or $(M, Y) \pmodels^- \theta$, that is, by induction hypothesis, $(M, Y) \not \models^- \psi^*$ or $(M, Z) \not \models^- \theta^*$. By the rules of Team Semantics, this is the case if and only if $(M, X) \not \models^- \psi^* \wedge \theta^*$, which is again equivalent to $(\psi \vee \theta)^*$. 
\item $(M, X) \pmodels^+ \exists v \phi$ if and only if $(M, X[M/v]) \pmodels^+ \phi$, that is, if and only if $(M, X[M/v]) \not \models^+ \phi^*$, that is, if and only if $(M, X) \not \models^+ \forall v \phi^*$. Observe now that $\forall v \phi^*$ is equivalent to $(\exists v \phi)^*$.

Furthermore, $(M, X) \pmodels^- \exists v \phi$ if and only if for all nondeterministic choice functions $F$ we have that $(M, X[F/v]) \pmodels^- \phi$, that is, if and only if for all such $F$ $(M, X[F/v]) \not \models^- \phi^*$. Thus, $(M, X) \not \models^- \forall v \phi^*$, and the conclusion follows.
\end{itemize}
\end{proof}

Putting everything together, we obtain at once that 
\begin{theorem}
Let $\phi$ be a sentence in $\PFO(=\!\!(\cdot, \cdot))$. Then $\phi$ is positively logically equivalent to some first order sentence. 
\end{theorem}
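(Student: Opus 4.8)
The plan is to chain together the two preceding results --- the Lemma translating $\PFO(=\!\!(\cdot, \cdot))$ formulas into $\FO(\not =\!\!(\cdot, \cdot))$ formulas, and the Corollary reducing $\FO(\not =\!\!(\cdot, \cdot))$ sentences to first order sentences --- and then to close the loop with a single contradictory negation. First I would apply the Lemma to the given sentence $\phi$, obtaining a formula $\phi^*$ of $\FO(\not =\!\!(\cdot, \cdot))$ with
\[
(M, X) \pmodels^\pm \phi \Leftrightarrow (M, X) \not \models^\pm \phi^*
\]
for all $M$ and $X$. Since $\phi^*$ is built from $\lnot \phi$ by replacing each dependence atom with the double negation of the corresponding non-dependence atom over the very same terms, this rewriting alters neither the free variables nor the status of $\phi$ as a sentence; in particular $\free(\phi^*) = \free(\phi) = \emptyset$, so $\phi^*$ is itself a \emph{sentence} of $\FO(\not =\!\!(\cdot, \cdot))$.

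Next I would invoke the Corollary: because every non-dependence atom is first order and upwards closed, $\phi^*$ is positively logically equivalent to some genuine first order sentence $\theta$, that is, $M \models^+ \phi^* \Leftrightarrow M \models \theta$ for all suitable models $M$. Finally I would assemble the equivalences, evaluating the sentence on the distinguished team $\{\emptyset\}$:
\[
M \pmodels^+ \phi \Leftrightarrow (M, \{\emptyset\}) \not \models^+ \phi^* \Leftrightarrow M \not \models^+ \phi^* \Leftrightarrow M \not \models \theta \Leftrightarrow M \models \lnot \theta.
\]
Thus $\phi$ is positively logically equivalent to the first order sentence $\lnot \theta$, which is exactly the desired conclusion.

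I do not expect any genuine obstacle here: essentially all of the work has already been carried out in the Lemma and the Corollary, and the present statement is a matter of composition. The only points demanding a little care lie at the two interfaces --- verifying that the $(\cdot)^*$ operation preserves sentencehood so that the Corollary is applicable, and tracking the single negation introduced when passing from the negative condition $M \not \models^+ \phi^*$ back to a positive first order condition $M \models \lnot \theta$. Neither of these is difficult, but both must be spelled out to keep the chain of biconditionals watertight.
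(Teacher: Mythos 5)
Your proof is correct and follows essentially the same route as the paper's own argument: apply the Lemma to get $\phi^* \in \FO(\not=\!\!(\cdot,\cdot))$, apply the Corollary (on upwards closed first order dependencies) to get a first order $\theta$ equivalent to $\phi^*$, and conclude $M \pmodels^+ \phi \Leftrightarrow M \models \lnot\theta$. The extra care you take at the interfaces --- checking that $(\cdot)^*$ preserves sentencehood and tracking the final negation --- is a welcome tightening of details the paper leaves implicit, but it does not change the argument.
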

\begin{proof}
By the lemma, there is a sentence $\phi^* \in \FO(\not = \!\!(\cdot, \cdot))$ such that $M \pmodels^+ \phi$ if and only if $M \not \models^+ \phi^*$. But by the above corollary, we can find a first-order sentence $\phi'$ such that $M \models^+ \phi^*$ if and only if $M \models \phi'$. Thus, $M \pmodels \phi$ if and only if $M \models \lnot \phi'$, as required.
\end{proof}

This result, of course, can be extended with ease to all other first-order downwards-closed dependencies by observing that the negations of these dependencies are first-order \emph{upwards-closed} dependencies (and, hence, safe to add to Team Semantics).

The situation in Paraconsistent Team Semantics is, once again, the dual of that of Team Semantics: even though non-dependence atoms bring the expressive power of our formalism far beyond that of First Order Logic (for instance, there exists a $\Pi_1^1$-sentence - and, therefore, a Paraconsistent Non-dependence Logic sentence - which is true in a model if and only if this model is finite), dependence atoms do not do so. This suggests the possibility that, perhaps, Paraconsistent Team Semantics may prove itself a less computationally demanding, but equally natural, framework than standard Team Semantics for the study of dependency notions in an abstract setting; but further work will be necessary in order to investigate the possibilities and the limitations of this approach.
\section{Conclusion}
As we saw, Paraconsistent Non-dependence Logic (that is, the variant of Dependence Logic in which the truth of a sentence is not defined in terms of existence of winning strategies of the Proponent $\eloise$, but in terms of the \emph{lack} of winning strategies for the Opponent $\abelard$) is a paraconsistent, conservative extension of First Order Logic that is capable of expressing its own truth and validity predicates. Furthermore, finding whether a sentence of this formalism is valid is no harder than finding whether a first order formula is valid; and the consistent fragment of this logic is equivalent precisely to First Order Logic. Just as Dependence Logic itself, Paraconsistent Non-dependence Logic admits both a game theoretic semantics and a compositional Team Semantics, and can be thought of as a fragment of Team Logic \cite{vaananen07b}; and furthermore, it is expressively equivalent to Universal Second Order Logic $\Pi_1^1$. On the other hand, Paraconsistent Dependence Logic is no more expressive than First Order Logic insofar as definability of classes of models is considered.

Ultimately, the reason for our formalism's paraconsistency - and the reason why it is capable of expressing its own truth and validity predicates - lies in its interpretation of negation, which is inherently game-theoretical: to negate a sentence is not to dismiss the possibility that it is true, but rather it is to admit one's incapability of forcing an opponent to admit its truth. The author dares not make any claims about this being the ``right'' notion of negation in some absolute sense: but, nonetheless, it seems to be a semantical operation which is worth investigating further.

Similarly, we most assuredly make no claim that Paraconsistent Non-dependence Logic is the ``One True Paraconsistent Logic''. It is a formalism that has a natural definition and elegant theoretical properties, and its notion of ``truth as lack of guarantee of falsifiability'' appears to have some epistemological interest; but whether a logic is or is not appropriate depends very much on what we want to use it \emph{for}. The purpose that led to the development of Paraconsistent Non-dependence Logic was to find a game-theoretically motivated, paraconsistent, conservative extension of First Order Logic which is able to express its own truth predicate; and, through this, to point out a possible connection between Game Theoretic Semantics, Dependence Logic and paraconsistency. Further research may be useful in order to unveil the possibilities of this connection: in particular, as we saw, much of the work of recent years about generalized dependency notions in team semantics and their definability relations (see for instance \cite{gradel13, galliani12, galliani13e}) can be adapted to the setting of Paraconsistent Team Semantics with little to no difficulty, and it provides ample opportunities to study logics obtained by adding ``dependencies'' corresponding to further epistemic properties of teams (interpreted as sets of possible states). Furthermore, even though the proof system of \cite{kontinenv11} can be used for finding valid formulas of Paraconsistent Non-Dependence Logic, it would be interesting to develop directly a proof system for this logic (or variants thereof).

One question that we leave entirely open is the connection between Paraconsistent Non-dependence Logic and other known paraconsistent logics: in particular, due to some conceptual similarity between the formalisms, we conjecture that a close relationship exists between Paraconsistent Non-dependence Logic and Ja\'skowski's \emph{Discussive Logic} \cite{jaskowski69,dacosta95}.

In any case, Paraconsistent Non-dependence Logic appears to be an intriguing conservative extension of First Order Logic which adds to it the ability of expressing its own truth predicate; and it is the hope of the author that the results of this paper may contribute to a more vigorous exchange of ideas between the Paraconsistent Logic and the Dependence Logic research communities. 
\bibliographystyle{plain}
\bibliography{biblio}
\end{document}